\theoremstyle{plain}
\newtheorem{theorem}{Theorem}
\newtheorem{proposition}[theorem]{Proposition}
\newtheorem{corollary}[theorem]{Corollary}
\newtheorem{lemma}[theorem]{Lemma}
\newtheorem{conjecture}[theorem]{Conjecture}
\theoremstyle{definition}
\newtheorem{definition}[theorem]{Definition}
\newtheorem{question}[theorem]{Question}
\theoremstyle{remark}
\newtheorem*{observation}{Observation}
\newtheorem{remark}[theorem]{Remark}
\declaretheoremstyle[%
  spaceabove=0pt,%
  spacebelow=0pt,%
  headfont=\normalfont\itshape,%
  postheadspace=1em,%
  qed=\qedsymbol%
]{mystyle} 
\declaretheorem[name={Proof},style=mystyle,unnumbered,
]{prf}
\renewcommand{\le}{\ensuremath \leqslant}
\renewcommand{\ge}{\ensuremath \geqslant}
\DeclareMathOperator{\ex}{ex}
\newcommand{\YP}[1]{{\color{magenta!80!black} \small{\textbf{Yani:~}#1}}}
\newcommand{\col}{black}
\newcommand{\arc}[2]{\draw[draw=\col,thick] (#1) edge[in=90,out=90] (#2);}
\newcommand{\arcd}[2]{\draw[draw=\col,thick] (#1) edge[in=270,out=270] (#2);}
\newcommand{\isovertex}[1]{\draw[fill=black] (#1) ellipse (0.15cm and 0.1cm);}
\newcommand{\OrderingBoilerplate}[1]{
\begin{tikzpicture}[yscale = 1.5,scale=0.25]\foreach \n in {1,...,10} {\coordinate (\n) at (\n,0);}#1\end{tikzpicture}
}
\tikzset{color1/.style={color=red,thick,opacity=1}}
\tikzset{color2/.style={color=cyan,thick,opacity=1}}
\newcommand{\cA}{\ensuremath{\mathcal A}}
\newcommand{\cB}{\ensuremath{\mathcal B}}
\newcommand{\cC}{\ensuremath{\mathcal C}}
\newcommand{\cP}{\ensuremath{\mathcal P}}
\newcommand{\cH}{\ensuremath{\mathcal H}}
\newcommand{\cG}{\ensuremath{\mathcal G}}
\newcommand{\cK}{\ensuremath{\mathcal K}}
\newcommand{\cM}{\ensuremath{\mathcal M}}
\newcommand{\cR}{\ensuremath{\mathcal R}}
\newcommand{\PP}{\ensuremath{\mathbb P}}
\title{Ramsey numbers of ordered graphs under graph operations}
\author{Jesse Geneson\thanks{Department of Mathematics, Iowa State University,
\texttt{geneson@iastate.edu}, \texttt{icwass@iastate.edu}} \and 
Amber Holmes \thanks{Department of Mathematics, University of Kentucky, \texttt{amber.holmes@uky.edu}} \and 
Xujun Liu \thanks{Department of Mathematics, University of Illinois at Urbana-Champaign, \texttt{xliu150@illinois.edu}, \texttt{dn2@illinois.edu}.  Research of Xujun Liu is supported by Award RB17164 of the Research Board of UIUC.} \and 
Dana Neidinger\footnotemark[3] \and 
Yanitsa Pehova\thanks{Mathematics Institute, University of Warwick, \texttt{y.pehova@warwick.ac.uk}. This  work  has  received  funding  from  the  European  Research  Council  (ERC)  under  the  European  Union’s Horizon 2020 research and innovation programme (grant agreement No 648509).  This publication reflects only its authors’ view; the European Research Council Executive Agency is not responsible for any use that may be made of the information it contains. } \and 
Isaac Wass\footnotemark[1]
}
\begin{document}
\maketitle

\begin{abstract}
An ordered graph $\cG$ is a simple graph together with a total ordering on its vertices. The (2-color) Ramsey number of $\cG$ is the smallest integer $N$ such that every 2-coloring of the edges of the complete ordered graph on $N$ vertices has a monochromatic copy of $\cG$ that respects the ordering.

In this paper we investigate the effect of various graph operations on the Ramsey number of a given ordered graph, and detail a general framework for applying results on extremal functions of 0-1 matrices to ordered Ramsey problems. We apply this method to give upper bounds on the Ramsey number of ordered matchings arising from sum-decomposable permutations, an alternating ordering of the cycle, and an alternating ordering of the tight hyperpath. 
We also construct ordered matchings on $n$ vertices whose Ramsey number is $n^{q+o(1)}$ for any given exponent $q\in(1,2)$.
\end{abstract}

\section{Introduction}

An \emph{ordered graph} $\cG$ is a pair $(G,<)$ where $G$ is a simple graph and $<$ is a linear ordering on the vertices. In this paper we look at a natural extension of the Ramsey number for simple graphs to the ordered setting: the \emph{Ramsey number $R\left(\cG\right)$ of an ordered graph} $\cG$ is the minimum number of vertices in an ordered complete graph $\cK_n$ such that every coloring of its edges contains a monochromatic copy of $\cG$. 
(An ordered graph $\cH$ is \emph{contained} in an ordered graph $\cG$ if there is an order-preserving injection from the vertices of $\cH$ to the vertices of $\cG$ that preserves edges.) 

Note that by Ramsey's theorem the Ramsey number of an ordered graph is well-defined. Indeed, if we let $R=R(K_{|\cG|})$, where $K_n$ denotes the complete unordered graph, then every coloring of $\cK_R$ contains a monochromatic copy of $\cK_{|\cG|}$, and hence a copy of $\cG$.

Ordered Ramsey theory has become increasingly popular in recent years. In a series of papers Balko et al. \cite{balko2015,balko2016+}, and independently Conlon et al. \cite{Conlon2017} investigated connections between some natural ordered graph parameters and the corresponding Ramsey numbers, as well as some striking differences between Ramsey numbers of classical and ordered sparse graphs.

For example, if we bound the \emph{bandwidth} of an ordered graph $\cG$, that is, the length of the longest edge in the ordering of $V(\cG)$, then its Ramsey number will be a power of $v(\cG)$, proportional to its bandwidth:

\begin{theorem}[Balko, Cibulka, Kr\'al, Kyn\v{c}l \cite{balko2015}]
For every fixed positive integer $k$, there is a constant $C_k$ such that every $n$-vertex ordered graph $\cG$ with bandwidth $k$ satisfies
\[R(\cG)\le C_k n^{128k}.\]
\end{theorem}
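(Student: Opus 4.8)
The plan is to reduce the statement to one canonical ordered graph and then exploit bounded bandwidth through a block decomposition combined with classical Ramsey estimates. First I would observe that every $n$-vertex ordered graph $\cG$ of bandwidth $k$ is contained in the ordered graph $\cP_n^{(k)}$ on vertex set $\{1,\dots,n\}$ in which $i$ and $j$ are joined exactly when $0<|i-j|\le k$; since bandwidth $k$ means every edge of $\cG$ has length at most $k$, the identity map is an order-preserving embedding into $\cP_n^{(k)}$. As Ramsey numbers are monotone under containment, $R(\cG)\le R(\cP_n^{(k)})$, so it suffices to bound the latter. The advantage of $\cP_n^{(k)}$ is its rigid local structure: grouping the vertices into consecutive blocks $W_1,\dots,W_m$ of size $k$ (so $m=\lceil n/k\rceil$), every edge lies inside a single $W_i$ or between two consecutive blocks $W_i,W_{i+1}$. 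In particular $\cP_n^{(k)}$ is a subgraph of the ``path of cliques'' in which each $W_i$ induces a $K_k$ and consecutive blocks are joined completely, so the task becomes to find a monochromatic path of $m$ cliques of size $k$ inside $\cK_N$.

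Next I would set up the host to match this block structure. I would partition the vertices of $\cK_N$ into consecutive super-blocks $S_1,\dots,S_M$, each of size $s:=R(K_k,K_k)\le 4^{k}$, so that each $S_j$ contains a monochromatic clique on $k$ vertices; I record its color and a witnessing $k$-set $A_j\subseteq S_j$. Discarding at most half the super-blocks, I may assume a single common clique color, say red, occurs in $M'\ge M/2$ of them. This yields a long left-to-right sequence of red $K_k$'s, which already supplies the within-block red cliques needed for each $W_i$; what remains is to guarantee that consecutive chosen super-blocks are joined in red in the pattern required for the cross-block edges of the path of cliques.

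The crux is thus to control the bipartite colorings between super-blocks while committing to one global color. To do this I would pass to an auxiliary complete ordered graph on the indices of the red super-blocks and color the pair $\{j,j'\}$ by the bipartite coloring pattern it induces between $A_j$ and $A_{j'}$; there are at most $2^{k^2}$ such patterns, so this is a $2^{k^2}$-coloring. Applying the ordered Ramsey theorem to this auxiliary coloring produces a long subsequence of super-blocks whose pairwise bipartite patterns are all identical, and a further bipartite-Ramsey thinning inside that fixed pattern extracts from each chosen $A_j$ a common sub-structure on which consecutive super-blocks are joined completely in red. Choosing the parameters so that the surviving subsequence has length at least $m$ then produces the desired red path of cliques, hence a red copy of $\cG$.

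The main obstacle is quantitative rather than structural: each refinement—fixing the clique color, fixing the bipartite pattern through the $2^{k^2}$-colored auxiliary Ramsey step, and thinning the sets $A_j$ to a common red sub-structure—shrinks the number of usable super-blocks, and these losses must be organized so that the total host size $N$ remains polynomial in $n$ with exponent only linear in $k$. A naive global application of the $2^{k^2}$-colored Ramsey theorem is too lossy, so I expect the efficient route is to replace it by a recursive halving of the ordering: split $V(\cG)$ into a left and right half meeting in only $O(k)$ boundary vertices, embed each half by induction, and glue them by a single Ramsey step on those $O(k)$ boundary vertices that costs a multiplicative factor of the form $2^{O(k)}$. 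Iterating over the $O(\log n)$ scales converts this per-scale factor into the polynomial bound $n^{O(k)}$, with the absolute constant in the exponent (here $128$) emerging from the Ramsey estimates used at a single scale.
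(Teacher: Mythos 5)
First, note that this theorem is not proved in the paper at all: it is quoted as background from Balko, Cibulka, Kr\'al and Kyn\v{c}l \cite{balko2015}, so there is no internal proof to compare against, and your attempt has to be judged on its own merits. Your opening reduction is fine: an $n$-vertex ordered graph of bandwidth $k$ embeds by the identity into the $k$-th power $\cP_n^{(k)}$ of the monotone path, Ramsey numbers are monotone under containment, and $\cP_n^{(k)}$ sits inside a ``path of cliques'' with blocks of size $k$. The super-block setup (blocks of size $R(K_k)\le 4^k$, majority clique color) is also sound. But the central step fails quantitatively, as you yourself half-concede: coloring pairs of super-blocks by their bipartite pattern gives a $2^{k^2}$-coloring, and extracting $m\approx n/k$ indices with \emph{identical} pairwise patterns is a multicolor Ramsey step for $K_m$, which forces $M'$ (hence $N$) to be exponential in $n$, not polynomial. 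There is also a secondary gap even granting that step: a single fixed pattern $\phi$ between consecutive $k$-sets need not contain any red substructure at all (e.g.\ $\phi$ all blue), so a case analysis on $\phi$ would be required, which you do not supply.

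The fallback you propose --- recursive halving with $O(k)$ boundary vertices, a $2^{O(k)}$ cost per scale, and $O(\log n)$ scales yielding $\bigl(2^{O(k)}\bigr)^{\log n}=n^{O(k)}$ --- has the right shape, but as written it is a hope rather than a proof, and the unaddressed points are exactly where the difficulty lives. Two concrete problems: (i) \emph{color coherence}: embedding each half ``by induction'' in a 2-colored host returns a monochromatic copy in an uncontrolled color, and the two halves may come back in different colors; making both halves and the gluing edges simultaneously red requires a strengthened induction hypothesis (e.g.\ producing many vertex-disjoint monochromatic copies, or copies in a prescribed color at a quantified cost), not a single black-box application; (ii) \emph{prescribing the boundary}: your ``single Ramsey step on the $O(k)$ boundary vertices'' cannot work as stated, because the inductive embedding of the right half does not let you dictate which host vertices receive its first $k$ vertices, so the cross edges between the last $k$ images of the left half and the first $k$ images of the right half are simply whatever the coloring says. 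Fixing this requires an embedding lemma with many candidate ``connector'' cliques on each side, and controlling its cost is precisely the technical content of the actual argument in \cite{balko2015}, where the carefully managed recursion is what produces the specific exponent $128k$. As it stands, your proposal identifies the correct skeleton but leaves the load-bearing lemma unproved.
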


Another natural graph parameter that has been considered in \cite{balko2015} and \cite{Conlon2017} is the \emph{interval chromatic number} $\chi_I(\cG)$, which is the smallest integer $k$ such that the vertices of an ordered graph $\cG$ can be properly colored in $k$ colors, where each color class consists of consecutive integers in the ordering of $V(\cG)$. 
If $\chi_I(\cG) = k$, we say $\cG$ is \emph{interval $k$-chromatic}.
Then, the following holds:

\begin{theorem}[Conlon, Fox, Lee, Sudakov \cite{Conlon2017}]
There exists a constant $c$ such that for any ordered graph $\cG$ on $n$ vertices with degeneracy\footnote{The \emph{degeneracy} of an ordered graph $\cG$ is the degeneracy of the corresponding unordered graph; that is, the smallest integer $d$ such that there exists an ordering of its vertices in which each vertex $v$ has at most $d$ neighbours $w$ with $w<v$ in the ordering.} $d$ and interval chromatic number $\chi$, we have
\[R(\cG)\le n^{cd \log \chi}.\]
\end{theorem}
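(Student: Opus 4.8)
The plan is to prove the bound by induction on the interval chromatic number $\chi$, using a divide-and-conquer scheme on the interval structure of $\cG$ together with dependent random choice to handle the degeneracy; the $\lceil\log_2\chi\rceil$ levels of recursion will produce the $\log\chi$ factor in the exponent, while each level will cost a factor polynomial in $n$ with exponent $O(d)$. Write the vertices of $\cG$ as a concatenation of independent intervals $V_1 < \cdots < V_\chi$ witnessing $\chi_I(\cG)=\chi$, fix a degeneracy ordering of the underlying graph in which every vertex has at most $d$ earlier neighbours, and let $\cG_L$ and $\cG_R$ be the ordered subgraphs of $\cG$ induced on $V_1\cup\cdots\cup V_{\lceil\chi/2\rceil}$ and on the remaining blocks. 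Both $\cG_L$ and $\cG_R$ have at most $n$ vertices, degeneracy at most $d$, and interval chromatic number at most $\lceil\chi/2\rceil$, so the inductive hypothesis will apply to each.

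I would start from the base case $\chi\le 2$, where $\cG$ is a sparse ordered bipartite graph (its two independent intervals coming one entirely before the other). Given a $2$-colouring of $\cK_N$, at least half of the edges between the first and second halves of the vertex set receive a common colour, say red; dependent random choice applied to this red bipartite graph yields a large left-hand set in which every $d$-element subset has many red common neighbours on the right. This is exactly the condition a greedy embedding needs: processing the vertices of $\cG$ in degeneracy order, each new vertex has at most $d$ already-embedded neighbours, and a red common neighbour avoiding the used vertices can always be chosen. For $N$ polynomial in $n$ with exponent $O(d)$ this produces a monochromatic ordered copy, establishing the base case.

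For the inductive step I would split the host $\cK_N$ into a left region $P$ and a right region $Q$, take the majority colour $c$ on the bipartite colouring between $P$ and $Q$, and use dependent random choice there to pass to large subsets $P'\subseteq P$, $Q'\subseteq Q$ across which $c$-coloured common neighbourhoods are plentiful for every $d$-set. Applying the recursion inside $P'$ to find $\cG_L$ and inside $Q'$ to find $\cG_R$ in the \emph{same} colour $c$ (see below for how this colour is enforced), and then realizing the cross-edges, which are sparse by degeneracy, in colour $c$ through the guaranteed common neighbourhoods, gives a monochromatic copy of $\cG$ respecting the order, since $V(\cG_L)$ lands to the left of $V(\cG_R)$. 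The dependent-random-choice step shrinks the usable host by a factor $n^{O(d)}$, so the recursion reads $N(\chi)\le n^{O(d)}\,N(\lceil\chi/2\rceil)$, which unrolls to $N(\chi)\le n^{O(d\log\chi)}$ as required; verifying the precise sizes is the routine part of the argument.

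The hard part will be coordinating colours across the recursion. The inductive hypothesis only guarantees a monochromatic copy in \emph{some} colour, whereas gluing $\cG_L$ to $\cG_R$ along the cross-edges demands that both halves and all connecting edges share one colour. I would resolve this by not running the two colours symmetrically but rather carrying a prescribed target colour down the recursion tree: the colour is fixed at each node by the majority and dependent-random-choice step, which simultaneously certifies that this colour is dense enough inside the selected common-neighbourhood sets for the embedding to continue in the \emph{same} colour at the next level. A secondary subtlety is that the degeneracy ordering need not agree with the left-to-right interval ordering, so when a vertex is embedded its earlier neighbours may lie in several different interval-regions; keeping the common-neighbourhood sets produced by dependent random choice nested across the $\chi$ blocks lets a single greedy pass respect both the degeneracy order and the interval constraints at once.
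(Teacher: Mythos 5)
You should note at the outset that the paper itself contains no proof of this statement: it is quoted verbatim, with attribution, from Conlon, Fox, Lee and Sudakov \cite{Conlon2017}, so your proposal can only be compared against the argument in that source. Your skeleton --- halve the interval chromatic number, use dependent random choice at each level, embed greedily in degeneracy order, and unroll the recursion $N(\chi)\le n^{O(d)}N(\lceil \chi/2\rceil)$ to get the exponent $O(d\log\chi)$ --- is the right shape, and it is indeed where the $\log\chi$ comes from. But the colour-coordination mechanism you propose, which you yourself flag as ``the hard part,'' does not work as described, and it is the crux of the whole theorem. The majority colour $c$ between $P$ and $Q$, together with the dependent-random-choice conclusion that every $d$-set of $P'$ has many $c$-coloured common neighbours in $Q'$, constrains only the \emph{bipartite} colouring between $P$ and $Q$; it says nothing whatsoever about the colouring induced inside $P'$. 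Every edge inside $P$ could carry the opposite colour, in which case $P'$ contains no $c$-coloured copy of $\cG_L$ at all and the recursion cannot ``continue in the same colour.'' Since your inductive hypothesis is a Ramsey statement, it only produces a monochromatic copy in \emph{some} colour, and nothing in your construction upgrades that to a prescribed colour; fixing colours ``by majority at each node'' is circular, because the child node's majority is independent of the parent's.

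There is a second, related defect: even granting both halves in colour $c$, you invoke the inductive hypothesis as a black box inside $Q'$, so you have no control over \emph{where} the image of $\cG_R$ lands. The cross-edges of $\cG$ join specific vertex pairs, and the common-neighbourhood guarantee is usable only if each vertex of $\cG_R$ is placed, one at a time, inside the $c$-coloured common neighbourhood of the already-embedded images of its at most $d$ back-neighbours --- that is, the induction must carry a vertex-by-vertex \emph{embedding} statement with prescribed target sets, not a Ramsey number. The known proof repairs both problems at once by strengthening the induction to a hereditary density statement: an embedding lemma asserting that any single graph (one colour class) that is $\varepsilon$-dense between every pair of sufficiently large vertex subsets contains every ordered $\cG$ with degeneracy $d$ and interval chromatic number $\chi$ once $N\ge n^{cd\log\chi}$. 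Everywhere-density, unlike majority colour, \emph{is} inherited by the large subsets produced by dependent random choice, which is exactly what lets the halving recursion and the cross-edge gluing proceed in one fixed colour; the Ramsey bound then follows from the standard Fox--Sudakov dichotomy that in any $2$-colouring of $\cK_N$ either the red graph is dense between all pairs of large sets or there is a large subset on which blue has density close to $1$. Without this pivot from a Ramsey recursion to a density recursion, your outline has a genuine gap; the ordered greedy step you sketch (reconciling degeneracy order with the interval order via nested candidate sets) is a real but secondary issue that also needs the quantitative ``many common neighbours in every gap'' form of the density hypothesis to go through.
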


However, the link between the interval chromatic number of a given ordered graph $\cG$ and its Ramsey number is not clear. Indeed, there exist interval 2-chromatic orderings $\cM_n$ of the matching on $n$ vertices, whose Ramsey number is of the order $n^{2-o(1)}$ \cite{Conlon2017,balko2016+}, while it is well-known that the Ramsey number $R(M_n)$ of a matching on $n$ vertices is linear in $n$. In fact, some ordered matchings (independent of their interval-chromatic number) have much higher Ramsey number:

\begin{theorem}[Conlon, Fox, Lee, Sudakov \cite{Conlon2017}]\label{thm:randommatching}
There exists a positive constant $c$ such that for all even $n$ there exists an ordered matching $\cM$ on $n$ vertices such that
\[R(\cM)\ge n^{c\log n/\log \log n}\]
\end{theorem}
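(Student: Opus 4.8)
The plan is to prove the existence statement directly rather than analyzing a single fixed matching: for $N=n^{c\log n/\log\log n}$ I would exhibit one $2$-coloring of $\cK_N$ that fails to contain a monochromatic ordered copy of \emph{some} matching on $n$ vertices. This suffices, because if every ordered matching $\cM$ on $n$ vertices satisfied $R(\cM)\le N$, then every $2$-coloring of $\cK_N$ — in particular the one I construct — would realize \emph{all} of the $(n-1)!!$ ordered matching patterns on $n$ vertices as monochromatic subgraphs. So the whole problem reduces to building a coloring of $\cK_N$ whose number of realized monochromatic matching patterns is strictly below the total count $(n-1)!!=n^{\Theta(n)}$ of such patterns.

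First I would fix the counting target. Let $f(\chi)$ denote the number of distinct ordered matchings on $n$ vertices appearing monochromatically in a coloring $\chi$ of $\cK_N$; the goal is $f(\chi)<(n-1)!!$. An information-theoretic reformulation is cleaner: if each realized pattern can be recovered from a string of $b$ bits, then $f(\chi)\le 2^b$, so it suffices to design $\chi$ with $b<\log_2\big((n-1)!!\big)=\Theta(n\log n)$. The naive encoding — recording the $n$ image vertices of a monochromatic copy — costs $\log_2\binom{N}{n}\approx n\log_2 N$ bits, which for $N=n^{c\log n/\log\log n}$ is $\Theta\!\big(n(\log n)^2/\log\log n\big)$, a factor $\log n/\log\log n$ too large. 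The entire difficulty is to design a coloring that constrains monochromatic matchings so severely that each realized pattern is pinned down by only about $n\log\log n/\log n$ free choices in $[N]$, bringing the bit-count down to $\Theta(n\log n)$.

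The construction I would use is a hierarchical (iterated blow-up) coloring with roughly $\log n/\log\log n$ levels. At the top level the vertices of $\cK_N$ are partitioned into blocks; the color of an edge joining two distinct blocks is dictated by a coarse rule, while edges inside a block are colored recursively by a rescaled copy of the same scheme. The aim is an inductive invariant on the realized pattern count: if a monochromatic matching uses only few cross-block edges, its pattern is determined by the block structure (cheap to encode) together with the recursively realized patterns inside the blocks, while the coarse rule is chosen precisely to forbid matchings that use many cross-block edges. Pushing this induction through the $\Theta(\log n/\log\log n)$ levels should yield $f(\chi)<(n-1)!!$, and hence a missed pattern $\cM$ with $R(\cM)>N$.

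The main obstacle — and the step I expect to absorb most of the work — is controlling the correlations between monochromatic copies well enough to make the inductive pattern count rigorous. Neither a first-moment/union bound (which already fails once $N\gtrsim n$, since the expected number of monochromatic copies of a fixed matching then exceeds $1$) nor the crude estimate bounding $f(\chi)$ by the \emph{total} number of monochromatic matchings (which overcounts wildly in a balanced coloring) is strong enough. The coloring rule at each level must be engineered so that the only monochromatic matchings it admits decompose cleanly along the hierarchy, and showing that no \emph{spread-out} monochromatic matching survives is the crux. Finally, tracking the recursion so that the number of levels is exactly $\Theta(\log n/\log\log n)$ rather than $\Theta(\log n)$ is what forces the $\log\log n$ factor into the exponent, and this bookkeeping must be carried out carefully to obtain the stated bound.
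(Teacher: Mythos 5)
Your opening reduction is fine: since, e.g., nested matchings have linear Ramsey number, no coloring of $\cK_N$ can avoid \emph{all} patterns, so aiming to show that the number $f(\chi)$ of realized patterns is below $(n-1)!!$ for one coloring $\chi$ is the right reformulation of the existence claim. But from there the proposal is a program, not a proof, and the one step you yourself flag as the crux --- that the hierarchical coloring forbids spread-out monochromatic matchings and that realized patterns decompose along the hierarchy --- is both unproved and, as stated, unattainable with the accounting you propose. The margin is far tighter than your ``$\Theta(n\log n)$ bits'' bookkeeping allows: $\log_2\bigl((n-1)!!\bigr)=\tfrac{n}{2}\log_2 n-\Theta(n)$, while already $\log_2\bigl((n/2)!\bigr)=\tfrac{n}{2}\log_2 n-\Theta(n)$. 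In your own construction the top-level blocks have size $N/t\gg n$, and cross-block edges are colored by a blow-up rule, so the moment one pair of blocks is joined monochromatically you realize \emph{every} interval $2$-chromatic matching on $n$ vertices simultaneously --- that is $(n/2)!$ patterns, leaving only a $2^{\Theta(n)}$ multiplicative margin before $f(\chi)$ hits $(n-1)!!$. Hence any encoding argument must be tight to the second-order (linear) term in the exponent; a bound of the form ``$c'n\log n$ bits'' with an unexamined constant $c'$, plus the cost of block structure and recursion, cannot distinguish $f(\chi)$ from $(n-1)!!$. Your budget of ``about $n\log\log n/\log n$ free choices'' does not survive this: the recursion across $\Theta(\log n/\log\log n)$ levels, combined with the free interleaving of endpoints inside blocks, generates realized patterns whose count you have no tool to cap below the razor-thin threshold.

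The known proof --- which this paper does not reproduce for Theorem~\ref{thm:randommatching} but adapts explicitly in the proof of Theorem~\ref{thm:matching-construction} --- sidesteps counting realized patterns entirely. One fixes a \emph{single} matching $\cM$ with a quantitative spread/discrepancy property (a uniformly random matching has it with high probability; the paper's variant uses a van der Corput permutation, cf.~\eqref{eq:discrepancy}), colors a small base graph $\cK_t$ uniformly at random, blows it up, and takes a union bound over the $\binom{2n+t}{t}$ partitions of the vertex set of that one matching into intervals, splitting into the two cases (large versus small $(d+1)$-st largest parts) exactly as in the paper's sketch. The discrepancy property is what guarantees that every embedding of $\cM$ must place edges across many distinct block pairs, which is the mechanism your ``coarse rule'' was supposed to supply; note also that the rule must be \emph{random} for the union bound to close, and once you introduce it and prove the per-partition estimate you have rederived the Conlon--Fox--Lee--Sudakov argument, making the pattern-counting frame superfluous. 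The genuinely missing idea in your proposal is thus the construction of a specific matching with a provable spread property; without it, no choice of hierarchical coloring lets the inductive pattern count go through.
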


Despite that, there are some orderings of $M_n$ which have linear Ramsey number. For example under the canonical ordering $0<1<2<...<n$ (for odd $n$), the ordered matching whose set of edges is $\{i,n-i\}_{i=1}^{(n-1)/2}$ has Ramsey number at most $2n+1$ by a simple pigeonhole argument. This prompts the question of minimizing the Ramsey number over all orderings of a given simple graph $G$. There has been some recent progress on this question by Balko, Jel\'inek and Valtr \cite{balko2016+} who showed that every graph $G$ on $n$ vertices with maximum degree $2$ admits an ordering whose Ramsey number is linear in $n$.

In this paper we investigate the behavior of Ramsey numbers of ordered graphs under certain graph operations such as taking disjoint unions and adding single edges and vertices. We also employ the method of matrix extremal functions to give bounds on the Ramsey numbers of specific orderings of the cycle and matchings, and we also give extensions to hypergraphs.

\textbf{Notation.} Throughout this paper, letters in cursive correspond to ordered graphs. We denote by $\overleftarrow{\cG}$ the \emph{mirror} of the ordered graph $\cG$, which is obtained by reversing the ordering of $V(\cG)$. The vertex set of a graph $\cG$ is denoted by $V(\cG)$, the number of vertices being $v(\cG)$, and its edge set is denoted by $E(\cG)$, the number of its edges being $e(\cG)$.

\section{Graph operations and Ramsey numbers}

In this section we present some results on the effects of various graph operations on the Ramsey number of an ordered graph.

\subsection{Disjoint union}

Let $\cG$ and $\cH$ be ordered graphs. We denote by $\cG+\cH$ the ordered graph on $v(G) + v(H)$ vertices where the first $v(\cG)$ vertices form a copy of $\cG$ and the remaining $v(\cH)$ vertices form a copy of $\cH$.
Note that $\cG+\cH$ is a specific ordering of the disjoint union $G'+H'$ of the unordered graph $G'$ underlying $\cG$ and the unordered graph $H'$ underlying $\cH$. Also note that $\cG+\cH$ is not always isomorphic or mirror-symmetric to $\cH+\cG$ and thus they may have different Ramsey numbers. 

Indeed, for example $R(\OrderingBoilerplate{\arc{2}{3};\arc{4}{5};\isovertex{1};\isovertex{6};}) = 8$ and $R(\OrderingBoilerplate{\arc{1}{2};\arc{5}{6};\isovertex{3};\isovertex{4};})=10$. The upper bounds for these two graphs follow from a routine pigeonhole principle argument, and the lower bounds follow from the colorings in Figures \ref{fig:1} and \ref{fig:2}.

\begin{minipage}{0.5\textwidth}
	\begin{figure}[H]
		\centering
		\begin{tikzpicture}[xscale=0.9, yscale = 1.5]
		\foreach \x in {1,...,7}{
			\node[circle] (\x) at (\x,0) {\x}; 
		}
        \renewcommand{\col}{red}
        \foreach \x/\y in {2/3,2/4,3/4}{
        	\arc{\x}{\y}
        }
        \renewcommand{\col}{cyan}
        \foreach \x/\y in {4/5,4/6,5/6}{
        	\arcd{\x}{\y}
        }
		\end{tikzpicture}
		\caption{An edge 2-coloring of $K_7$ that avoids a monochromatic copy of $\protect\OrderingBoilerplate{\protect\arc{2}{3};\protect\arc{4}{5};\protect\isovertex{1};\protect\isovertex{6};}$. Uncolored edges can be any color.}
		\label{fig:1}
	\end{figure}
\end{minipage}\quad
\begin{minipage}{0.45\textwidth}
	\begin{figure}[H]
		\centering
		\begin{tikzpicture}[xscale=0.9, yscale = 1.5]
		\foreach \x in {1,...,9}{
			\node[circle] (\x) at (\x,0) {\x}; 
		}
        \renewcommand{\col}{red}
        \foreach \x/\y in {1/2,1/3,1/4,1/5,2/3,2/4,2/5,3/4,3/5,4/5}{
        	\arc{\x}{\y}
        }
        \renewcommand{\col}{cyan}
        \foreach \x/\y in {5/6,5/7,5/8,5/9,6/7,6/8,6/9,7/8,7/9,8/9}{
        	\arcd{\x}{\y}
        }
		\end{tikzpicture}
		\caption{An edge 2-coloring of $K_9$ that avoids a monochromatic copy of $\protect\OrderingBoilerplate{\protect\arc{1}{2};\protect\arc{5}{6};\protect\isovertex{3};\protect\isovertex{4};}$. Uncolored edges can be any color.}
		\label{fig:2}
	\end{figure}
\end{minipage}

\begin{lemma}\label{ordered-disjoint-union}
For ordered graphs $\cG$ and $\cH$, the following inequalities hold:
$$R(\cG) + R(\cH) \le R(\cG + \cH) \le R(\cG)+R(\cH)+R(\cH + \cG).$$
\end{lemma}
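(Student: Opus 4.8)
The plan is to prove the two inequalities separately, both by exploiting the structural fact that $\cG+\cH$ has \emph{no} edges between its $\cG$-part and its $\cH$-part, so a monochromatic copy is just a monochromatic $\cG$ lying entirely to the left of a monochromatic $\cH$ of the same color, with the colors of all edges joining the two parts irrelevant.

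For the lower bound I would exhibit a $(\cG+\cH)$-free $2$-coloring of $\cK_{R(\cG)+R(\cH)-1}$. Split the vertices, in increasing order, into a block $A$ consisting of the first $R(\cG)-1$ vertices, a single middle vertex $m$, and a block $B$ consisting of the last $R(\cH)-1$ vertices. Since $|A|<R(\cG)$ and $|B|<R(\cH)$, I can color the edges inside $A$ with no monochromatic $\cG$ and the edges inside $B$ with no monochromatic $\cH$; every remaining edge (those incident to $m$ and those between $A$ and $B$) is colored arbitrarily. The point I expect to need care with is the off-by-one: gluing a $\cG$-free block and an $\cH$-free block alone yields only $R(\cG)+R(\cH)-2$ vertices, so the extra vertex $m$ is essential, and the reason its edges may be colored freely is exactly the disjoint-union structure above. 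To verify the construction, suppose a monochromatic $\cG+\cH$ exists, with $g^\ast$ the largest vertex used by the $\cG$-part and $h^\ast$ the smallest vertex used by the $\cH$-part, so $g^\ast<h^\ast$. If $g^\ast\le R(\cG)-1$ then the whole $\cG$-part lies in $A$, contradicting that $A$ is $\cG$-free; otherwise $g^\ast$ is $m$ or later, so $h^\ast>g^\ast$ forces the entire $\cH$-part into $B$, contradicting that $B$ is $\cH$-free. Hence $R(\cG+\cH)\ge R(\cG)+R(\cH)$.

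For the upper bound I would take any $2$-coloring of $\cK_N$ with $N=R(\cG)+R(\cH)+R(\cH+\cG)$ and partition its vertices into three consecutive blocks, in this order: $X$ of size $R(\cG)$, then $Z$ of size $R(\cH+\cG)$, then $Y$ of size $R(\cH)$. By definition of the Ramsey numbers, $X$ contains a monochromatic $\cG$ in some color $\alpha$, $Y$ contains a monochromatic $\cH$ in some color $\beta$, and $Z$ contains a monochromatic $\cH+\cG$ in some color $\gamma$ (which supplies a $\gamma$-colored $\cH$ on its earlier vertices and a $\gamma$-colored $\cG$ on its later vertices). If $\alpha=\beta$ we are done immediately, since the $\cG$ in $X$ precedes the $\cH$ in $Y$ (as $X<Y$), giving a monochromatic $\cG+\cH$.

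The only real obstacle is the color-mismatch case $\alpha\ne\beta$, and this is precisely what forces the reversed union $\cH+\cG$ into the bound and dictates the block order $X<Z<Y$. Say $\alpha$ is red and $\beta$ is blue. If $\gamma$ is red, pair the red $\cG$ from $X$ with the red $\cH$ sitting at the front of $Z$: since $X<Z$, this is a red $\cG+\cH$. If $\gamma$ is blue, pair the blue $\cG$ sitting at the back of $Z$ with the blue $\cH$ in $Y$: since $Z<Y$, this is a blue $\cG+\cH$. In every case a monochromatic $\cG+\cH$ appears, so $R(\cG+\cH)\le R(\cG)+R(\cH)+R(\cH+\cG)$. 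I expect the two sub-cases to go through cleanly once the blocks are ordered so that the $\cH$-part of $Z$ lies after $X$ while its $\cG$-part lies before $Y$; getting this ordering right is the crux of the argument.
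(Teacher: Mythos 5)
Your proof is correct and follows essentially the same route as the paper: the same lower-bound construction ($\cG$-free block, single middle vertex, $\cH$-free block, with a location argument on the rightmost vertex of the $\cG$-part and leftmost vertex of the $\cH$-part), and the same upper bound via three consecutive blocks of sizes $R(\cG)$, $R(\cH+\cG)$, $R(\cH)$ with a pigeonhole on colors. Your explicit case analysis ($\alpha=\beta$, then $\gamma$ matching one of them) just spells out the paper's ``two of the three structures share a color'' step.
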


\begin{prf}
Let us denote $k=R(\cG)-1$ and $l=R(\cH)-1$, and suppose we are given the complete graph on vertex set $\{u_1,u_2,\dots,u_{k},v,w_1,w_2,\dots,w_{l}\}$. By definition, there is a coloring of $K_{k}$ that is $\cG$-free and a coloring of $K_{l}$ that is $\cH$-free.

Color the edges among $\{u_1,\dots,u_{k}\}$ using this $\cG$-free coloring, and color the edges among $\{w_1,\dots,w_{l}\}$ with this $\cH$-free coloring. By construction we see that the rightmost vertex of any copy of $\cG$ must belong to $\{v,w_1,w_2,\dots,w_{l}\}$ and the leftmost vertex of any copy of $\cH$ must belong to $\{u_1,u_2,\dots,u_{k},v\}$.

Suppose this graph has a monochromatic copy of $\cG + \cH$. Then the rightmost vertex of the copy of $\cG$ is left of the leftmost vertex of the copy of $\cH$. However, this implies that a vertex in $\{v,w_1,w_2,\dots,w_{l}\}$ is left of a vertex in $\{u_1,u_2,\dots,u_{k},v\}$, a contradiction. This establishes the lower bound.

Now consider any 2-edge-coloring of $K_{k+l+m+2}$ where $m=R(\cH+\cG)$. The first $k+1$ vertices contain a monochromatic $\cG$, the next $m$ vertices contain a monochromatic $\cH + \cG$, and the remaining $l+1$ vertices contain a monochromatic $\cH$. Among these three structures, two are the same color, giving the desired monochromatic copy of $\cG + \cH$.
\end{prf}

\begin{proposition}
Let $\cG$ be an ordered graph, and let $\cH$ be a subgraph of $\cG$. Then $R(\cG+\cH) \le 2R(\cG)+R(\cH)$ and $R(\cH+\cG) \le 2R(\cG)+R(\cH)$ .
\end{proposition}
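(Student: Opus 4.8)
The plan is to prove both bounds directly by a three-block pigeonhole argument, sharpening the idea behind Lemma~\ref{ordered-disjoint-union}. The crucial extra ingredient is that, because $\cH$ is an ordered subgraph of $\cG$, every monochromatic copy of $\cG$ automatically contains a monochromatic copy of $\cH$ of the same color, sitting in a prescribed position within it. This lets us ``recycle'' a copy of $\cG$ as a copy of $\cH$ whenever the colors of the blocks do not immediately cooperate, which is exactly what removes the extra $R(\cH+\cG)$ term from the generic lemma.

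For the first inequality, I would take an arbitrary $2$-coloring of $\cK_N$ with $N=2R(\cG)+R(\cH)$ and split the vertex set into three consecutive intervals $B_1,B_2,B_3$ of sizes $R(\cG)$, $R(\cG)$, $R(\cH)$ respectively (from left to right). By definition of the Ramsey numbers, $B_1$ and $B_2$ each contain a monochromatic copy of $\cG$, say in colors $c_1$ and $c_2$, and $B_3$ contains a monochromatic copy of $\cH$, say in color $c_3$. If $c_3=c_1$ or $c_3=c_2$, then the corresponding copy of $\cG$ (lying entirely to the left of $B_3$) followed by the copy of $\cH$ in $B_3$ forms a monochromatic $\cG+\cH$. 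Otherwise $c_1=c_2\neq c_3$, and here I would use the subgraph observation: the copy of $\cG$ in $B_2$ contains a monochromatic copy of $\cH$ in color $c_1$, which lies to the right of the copy of $\cG$ in $B_1$; concatenating these gives a monochromatic $\cG+\cH$ in color $c_1$. In every case a monochromatic $\cG+\cH$ appears, so $R(\cG+\cH)\le N$.

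The second inequality follows by the mirror of the same argument: I would now order the blocks as $B_1,B_2,B_3$ of sizes $R(\cH)$, $R(\cG)$, $R(\cG)$, extract a monochromatic $\cH$ from $B_1$ and monochromatic copies of $\cG$ from $B_2$ and $B_3$, and run the identical case analysis to build a monochromatic $\cH+\cG$. When the $\cH$-block clashes in color with both $\cG$-blocks, the two $\cG$-copies share a color, so the leftmost one supplies a monochromatic $\cH$ to precede the rightmost $\cG$.

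I do not expect a serious obstacle here, since both statements reduce to elementary pigeonhole once the right block sizes are fixed; the only point requiring care is the bookkeeping of positions---verifying that the recycled copy of $\cH$ genuinely lies to the right (resp.\ left) of the copy of $\cG$ it must be concatenated with, which is guaranteed by the block structure. It is worth noting that the bound is strictly better than what one obtains by substituting the new estimate for $R(\cH+\cG)$ into Lemma~\ref{ordered-disjoint-union}, so the direct argument is the efficient route.
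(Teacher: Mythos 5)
Your proposal is correct and takes essentially the same route as the paper: the paper also splits the $2R(\cG)+R(\cH)$ vertices into consecutive blocks of sizes $R(\cG)$, $R(\cG)$, $R(\cH)$ (and the reversed order for $\cH+\cG$), applies pigeonhole to the two monochromatic copies of $\cG$ and one of $\cH$, and uses the containment of $\cH$ in $\cG$ to extract a monochromatic $\cH$ from a $\cG$-copy when the two $\cG$-copies share a color. Your write-up simply makes explicit the case analysis that the paper compresses into a single sentence.
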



\begin{prf}
Consider a 2-edge coloring of the ordered complete graph on $2R(\cG)+R(\cH)$ vertices.
 
By definition, the first $R(\cG)$ vertices and the middle $R(\cG)$ contain a monochromatic copy of $\cG$ and the last $R(\cH)$ vertices contain a monochromatic copy of $\cH$. Out of these two monochromatic copies of $\cG$ and one monochromatic copy of $\cH$, two are the same color. Since $\cH$ is contained in $\cG$, these two copies necessarily form a monochromatic copy of $\cG+\cH$. 

Analogously, if we instead wish to find a monochromatic copy of $\cG+\cH$, consider the first $R(\cH)$ vertices, the middle $R(\cG)$ and the last $R(\cG)$ vertices. 
\end{prf} 

By an analogous argument, for any pair of graphs $\cG$ and $\cH$ we can obtain the following upper bound:

\begin{proposition}
Let $\cG$ and $\cH$ be any two ordered graphs. Then $R(\cG+\cH) \le R(\cG)+R(\cH)+R(K_n)$, where $n = \max(R(\cG),R(\cH))$.
\end{proposition}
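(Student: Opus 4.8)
The plan is to mimic the three-block pigeonhole argument of the preceding proposition, but to replace the middle $\cG$-block — which worked there only because $\cH$ was a subgraph of $\cG$ — by a block large enough to force a \emph{monochromatic} complete graph, which can then play the role of \emph{either} $\cG$ or $\cH$ as needed. Concretely, I would take an arbitrary 2-edge-coloring of the ordered complete graph $\cK_N$ with $N = R(\cG)+R(\cH)+R(K_n)$ and $n=\max(R(\cG),R(\cH))$, and split its vertex set into three consecutive blocks $A$, $B$, $C$ of sizes $R(\cG)$, $R(K_n)$, and $R(\cH)$ respectively. Since the blocks are consecutive, every vertex of $A$ precedes every vertex of $B$, which in turn precedes every vertex of $C$.

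By the definitions of the relevant Ramsey numbers, $A$ contains a monochromatic copy of $\cG$, say in color $a$; $C$ contains a monochromatic copy of $\cH$, say in color $c$; and $B$ contains a monochromatic copy of $\cK_n$, say in color $b$. The key observation is that a monochromatic complete ordered graph on $n$ vertices contains a copy of \emph{every} ordered graph on at most $n$ vertices, all of whose edges necessarily receive that one color. Since $n=\max(R(\cG),R(\cH)) \ge \max(v(\cG),v(\cH))$ (using $R(\cdot)\ge v(\cdot)$), the monochromatic $\cK_n$ sitting inside $B$ contains both a $b$-colored copy of $\cG$ and a $b$-colored copy of $\cH$.

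To finish, I would apply the pigeonhole principle to the three colors $a,b,c\in\{\text{red},\text{blue}\}$, so that at least two of them coincide. If $a=b$, the copy of $\cG$ in $A$ followed by the $b$-colored copy of $\cH$ in $B$ forms a monochromatic $\cG+\cH$; if $b=c$, the $b$-colored copy of $\cG$ in $B$ followed by the copy of $\cH$ in $C$ does; and if $a=c$, the copy of $\cG$ in $A$ followed by the copy of $\cH$ in $C$ does. In each case the selected copy of $\cG$ lies entirely to the left of the selected copy of $\cH$, because $A$, $B$, $C$ are consecutive, so their union is an order-respecting monochromatic copy of $\cG+\cH$.

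I do not anticipate a serious obstacle: the argument is a direct adaptation of the previous proof. The only points requiring care are that the two outer blocks alone do not suffice (when $a\neq c$ they yield mismatched colors), which is exactly the purpose of inserting the middle block, and the verification of the size inequality $n\ge\max(v(\cG),v(\cH))$ that guarantees the monochromatic $\cK_n$ genuinely contains same-colored copies of both $\cG$ and $\cH$. Both are routine, so the heart of the proof is simply the recognition that a monochromatic clique can substitute for the ``universal'' middle graph once $\cH\subseteq\cG$ can no longer be assumed.
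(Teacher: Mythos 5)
Your proof is correct and is essentially the paper's own argument: three consecutive blocks of sizes $R(\cG)$, $R(K_n)$, $R(\cH)$, with the monochromatic $\cK_n$ in the middle block supplying same-colored copies of both $\cG$ and $\cH$ (valid since $n=\max(R(\cG),R(\cH))\ge\max(v(\cG),v(\cH))$), followed by pigeonhole on the three colors. You have merely spelled out the case analysis that the paper leaves implicit.
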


\begin{prf}
The first $R(\cG)$ vertices contain a monochromatic copy of $\cG$, the middle $R(K_n)$ contain a monochromatic copy of $\cG$ and a monochromatic copy of $\cH$, both in the same color, and the last $R(\cH)$ vertices in the ordering contain a monochromatic copy of $\cH$. Thus there must be a monochromatic copy of $\cG+\cH$. 
\end{prf}

\subsection{Adding isolated vertices}

How much larger can the Ramsey number get if we add a single isolated vertex to an ordered graph?

\begin{proposition} \label{vertex on end}
Let $\cG$ be any ordered graph. Let $\cG'$ be the ordered graph on $v(\cG)+1$ vertices obtained from adding a single isolated vertex either to the right or to the left of all vertices of $\cG$. Then $R(\cG') = R(\cG)+1$. 
\end{proposition}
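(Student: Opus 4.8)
The plan is to prove the two inequalities $R(\cG') \le R(\cG)+1$ and $R(\cG') \ge R(\cG)+1$ separately, treating only the case where the new isolated vertex $z$ is added to the right of $\cG$; the left case then follows by applying the result to the mirror $\overleftarrow{\cG}$ together with the identity $R(\cG)=R(\overleftarrow{\cG})$. Throughout write $N = R(\cG)$ and $v = v(\cG)$, and note at the outset the trivial containment $\cG \subseteq \cG'$ (delete the isolated vertex), which already yields $R(\cG') \ge R(\cG)$; the whole content is to sharpen each side by exactly one.

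For the upper bound I would take an arbitrary $2$-edge-coloring of $K_{N+1}$ on vertices $1 < 2 < \cdots < N+1$, restrict to the first $N$ vertices, and invoke the definition of $N = R(\cG)$ to extract a monochromatic copy of $\cG$, say in red, all of whose vertices lie in $\{1,\dots,N\}$. Since vertex $N+1$ lies strictly to the right of this copy, I can use it as the image of $z$; because $z$ has no incident edges in $\cG'$, there is no color constraint on the edges at $N+1$, so the red copy of $\cG$ together with $N+1$ forms a monochromatic copy of $\cG'$. Hence $R(\cG') \le N+1$.

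The lower bound is the more interesting direction and requires exhibiting a $\cG'$-free coloring of $K_{N}$. By definition of $N = R(\cG)$ there is a $\cG$-free coloring $c$ of $K_{N-1}$; I would place $c$ on the first $N-1$ vertices $\{1,\dots,N-1\}$ and color every edge incident to vertex $N$ arbitrarily. The key observation is that in any copy of $\cG'$ the isolated vertex $z$ is the rightmost of the $v+1$ chosen vertices, so the $v$ vertices realizing the $\cG$-part must all lie strictly to its left; inside $K_N$ this forces the entire $\cG$-part into $\{1,\dots,N-1\}$, where $c$ admits no monochromatic copy of $\cG$. Thus no monochromatic $\cG'$ can occur, giving $R(\cG') \ge N+1$, which combined with the upper bound yields the claimed equality.

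The argument is essentially bookkeeping, so the only real subtlety — and the step I would state most carefully — is the ``room'' observation underpinning the lower bound: the image of $z$ must lie strictly beyond the rightmost vertex of the $\cG$-part, so reserving the single extreme vertex $N$ already destroys every potential copy of $\cG'$. I would emphasize that this uses only that $z$ is the extreme vertex of $\cG'$ in the ordering (hence its image is extreme in any copy), and nothing about the edges of $\cG$, which is precisely why appending an isolated vertex at either end costs exactly one.
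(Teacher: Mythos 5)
Your proof is correct and matches the paper's argument essentially step for step: the same upper bound (find a monochromatic $\cG$ in the first $R(\cG)$ vertices and append the last vertex as the isolated one) and the same lower-bound construction (a $\cG$-free coloring on $R(\cG)-1$ vertices plus one arbitrarily colored extreme vertex). Your explicit remarks on the mirror reduction and on why the image of the isolated vertex forces the $\cG$-part into the first $R(\cG)-1$ vertices merely spell out what the paper leaves implicit.
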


\begin{prf} Without loss of generality, we may assume that we are adding an isolated vertex to the right of all vertices of $\cG$.

First, let us consider a 2-edge colored ordered complete graph on $R(\cG)+1$ vertices. By definition, there exists a monochromatic copy of $\cG$ in the first $R(\cG)$ vertices, and thus the last vertex along with this copy of $\cG$ form a monochromatic copy of $\cG'$.

Now consider a coloring of the complete graph on $R(\cG)-1$ vertices which avoids a monochromatic copy of $\cG$. Add a vertex to the right of this, and color all of its adjacent edges arbitrarily. Any monochromatic copy of $\cG'$ must contain a copy of $\cG$ on the first $R(\cG)-1$ vertices. Hence we have that $R(\cG') \ge R(\cG)+1$.
\end{prf}

We now consider a more general variant of this, where isolated vertices may be inserted anywhere in the vertex ordering of a graph.

\begin{definition}
For an ordered graph $\cG$ with vertex set $(v_1,...,v_n)$, we define the $(k_1,...,k_{n-1})$\emph{-spread} of $\cG$ to be the ordered graph $\cG'$ obtained by adding $k_i$ isolated vertices between $v_i$ and $v_{i+1}$ for each $1\le i\le n-1$. The \emph{head} of $\cG'$ is the largest positive integer $h$ such that $k_1=...=k_{h-1}=0$ and the \emph{tail} of $\cG'$ is the largest positive integer $t$ such that $k_{n-t+1}=...=k_{n-1}=0$.
\end{definition}
\begin{proposition} \label{addvertexub}
Let $\cG'$ be the $(k_1,...,k_{n-1})$-spread of an ordered graph $\cG$ on $n$ vertices, such that $k_i\le k$ for all $1\le i\le n-1$. Then $R(\cG') \le R(\cG) + k(R(\cG)-h-t+1)$, where $h$ and $t$ are respectively the head and the tail of $\cG'$. 
\end{proposition}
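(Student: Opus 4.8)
The plan is to realise $\cK_N$ with $N = R(\cG) + k\bigl(R(\cG)-h-t+1\bigr)$ as a ``main'' complete subgraph on $R(\cG)$ vertices whose consecutive gaps are padded with extra ``buffer'' vertices, and then to show that any monochromatic copy of $\cG$ guaranteed inside the main part is automatically spread out enough to accommodate the isolated vertices of $\cG'$. Write $R=R(\cG)$. Given an arbitrary $2$-coloring of $\cK_N$, I would first single out a set $A=\{a_1<\dots<a_R\}$ of $R$ main vertices, chosen so that between $a_j$ and $a_{j+1}$ there lie exactly $k$ of the remaining buffer vertices whenever $h\le j\le R-t$, and no buffer vertices otherwise. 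The total count matches exactly, since the number of padded gaps is $(R-t)-h+1=R-h-t+1$, each holding $k$ buffers. As $|A|=R=R(\cG)$ and the coloring restricted to $A$ is a $2$-coloring of $\cK_R$, there is a monochromatic copy $w_1<\dots<w_n$ of $\cG$ inside $A$, with $w_i$ in the role of $v_i$.

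The key step is to verify that this copy leaves room for the isolated vertices of $\cG'$, i.e.\ that for every gap index $i$ with $h\le i\le n-t$ there are at least $k_i$ vertices of $\cK_N$ strictly between $w_i$ and $w_{i+1}$. Here I would use the elementary rank bounds forced by $A$: since $w_1<\dots<w_i$ are $i$ distinct elements of $A$, the vertex $w_i$ has rank at least $i$ in $A$, hence at least $h$ because $i\ge h$; symmetrically, since $w_{i+1}<\dots<w_n$ leaves $n-(i+1)$ elements of $A$ to its right, $w_{i+1}$ has rank at most $R-t+1$ because $i+1\le n-t+1$. Consequently the $A$-gap immediately to the right of $w_i$ has index in the padded window $[h,\,R-t]$, so it contains $k\ge k_i$ buffer vertices, all lying strictly between $w_i$ and $w_{i+1}$. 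Mapping $v_i\mapsto w_i$ and embedding each block of $k_i$ isolated vertices into these buffers gives an order- and edge-preserving injection, so $\cK_N$ contains a monochromatic $\cG'$ and $R(\cG')\le N$.

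I expect the main obstacle to be exactly the control of where the copy of $\cG$ lands inside $A$: a priori it could be positioned arbitrarily, and padding every gap uniformly would yield only the weaker bound $R+k(R-1)$. The savings of $k(h+t-2)$ come from noting that the head $v_1,\dots,v_h$ and tail $v_{n-t+1},\dots,v_n$ need no internal buffers, while the rank bounds above confine every gap that \emph{does} need buffers to the window $[h,\,R-t]$, so the first $h-1$ and last $t-1$ gaps of $A$ may be left unpadded. One should also dispose of the degenerate case in which no isolated vertices are added ($\cG'=\cG$, $h=t=n$): there the claim is immediate with $k=0$, and otherwise the presence of at least one positive gap forces $h+t\le n\le R$, which guarantees that the window $[h,\,R-t]$ is nonempty and the construction well defined.
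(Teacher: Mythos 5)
Your proof is correct and takes essentially the same route as the paper: the paper likewise builds the set $X$ of the first $h$ vertices, the last $t$ vertices, and every $(k+1)$-st vertex in between, finds a monochromatic copy of $\cG$ on these $R(\cG)$ vertices, and extends it to a $(k,\dots,k)$-spread. Your explicit rank bounds confining each padded gap to the window $[h,\,R-t]$, and your check of the degenerate case, simply spell out the step the paper compresses into ``by our choice of middle vertices in $X$, this copy of $\cG$ can be extended.''
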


\begin{prf}
Consider a 2-edge colored ordered complete graph on $R(\cG) + k(R(\cG)-h-t+1)$ vertices. Consider the set $X$ consisting of the first $h$ vertices, the last $t$ vertices, and for every vertex in between, starting with vertex $h$, every $(k+1)^\text{th}$ vertex (i.e. $h, h+k+1, \ldots$). Since $|X|=R(\cG)$, there is a monochromatic copy of $\cG$ on these vertices.

However, by our choice of "middle" vertices in $X$, this copy of $\cG$ can be extended to a  $(k,k,...,k)$-spread of $\cG$, thus giving a monochromatic copy of $\cG'$.
\end{prf}

\begin{observation}Note that this bound is in a certain sense tight for all ordered graphs $\cG$. Consider the ordered graph $\cG'$ obtained by inserting $k$ isolated vertices between every pair of consecutive vertices in the ordering of $V(\cG)$. Then $R(\cG')\ge (k-1)(R(\cG)-1)+1$. Indeed, let $c$ be a $\cG$-free coloring of the complete graph on $m=R(\cG)-1$ vertices. Consider intervals $I_1,...,I_m$, each consisting of $k-1$ vertices, and for each $1\le i<j\le m$ color all edges between $I_i$ and $I_j$ in color $c(ij)$. Then any monochromatic copy of $\cG$ in this complete graph must have two vertices in the same interval, which means it is not contained in a copy of $\cG'$. This, together with Proposition \ref{addvertexub} shows that $R(\cG')\sim kR(\cG)$, while the classical Ramsey number of a graph would grow only by an additive factor of $k(v(\cG)-1)$.
\end{observation}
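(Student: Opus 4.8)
The statement is a lower bound, so the plan is to \emph{construct} a 2-coloring of the ordered complete graph $\cK_N$ on $N=(k-1)(R(\cG)-1)$ vertices that contains no monochromatic copy of $\cG'$; such a coloring forces $R(\cG')>N$, i.e. $R(\cG')\ge (k-1)(R(\cG)-1)+1$. The construction I would use is a blow-up of an optimal $\cG$-free coloring. Writing $m=R(\cG)-1$, fix a $\cG$-free 2-coloring $c$ of $\cK_m$ on vertex set $[m]$, which exists by the definition of $R(\cG)$. I would partition the $N=(k-1)m$ vertices into consecutive intervals $I_1<\cdots<I_m$, each of size $k-1$, color every edge between $I_i$ and $I_j$ (for $i<j$) with the color $c(ij)$, and color edges inside a single interval arbitrarily.

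The key step is to show that every monochromatic copy of $\cG$ in this coloring places two of its vertices in a common interval. I would argue by contradiction: if a monochromatic copy used at most one vertex of each interval, then collapsing each $I_i$ to the point $i$ is order-preserving, and since each edge of the copy runs between two distinct intervals it carries exactly the color $c(ij)$ of the corresponding edge of $\cK_m$. The image would then be a monochromatic copy of $\cG$ under $c$, contradicting that $c$ is $\cG$-free.

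It then remains to convert this into the absence of a monochromatic $\cG'$. Since $\cG'$ has the same edge set as its $\cG$-core, a monochromatic $\cG'$ would contain a monochromatic $\cG$ on the images of the $n$ non-isolated vertices, and by the construction of $\cG'$ any two of these core vertices have at least $k$ vertices strictly between them in the ordering. But the key step forces two of them into a single interval $I_i$, where at most $(k-1)-2=k-3<k$ vertices can lie strictly between them — a contradiction, so the coloring is $\cG'$-free.

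I expect the main obstacle to be this final counting step: one must match the interval size against the $k$-vertex gaps built into $\cG'$, and crucially apply the ``at least $k$ vertices between'' bound to \emph{any} two core vertices (not only consecutive ones), which is what lets a single bad pair destroy the whole copy. The projection in the key step should also be checked to be both order-preserving and color-faithful on edges, but both are immediate from the consecutiveness of the intervals and the definition of the cross-interval coloring. I would finally combine this lower bound with the upper bound of Proposition \ref{addvertexub} (applied with all $k_i=k$ and $h=t=1$) to read off the asymptotic $R(\cG')\sim kR(\cG)$. One can note in passing that enlarging the intervals to size $k+1$ runs through the same argument with a stronger constant, but the stated bound already suffices for this conclusion.
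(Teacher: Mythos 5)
Your proposal is correct and takes essentially the same route as the paper: the same blow-up of a $\cG$-free coloring of the complete graph on $R(\cG)-1$ vertices into consecutive intervals of size $k-1$, the same collapse argument forcing any monochromatic copy of $\cG$ to put two vertices in one interval, and the same spacing contradiction against the $k$ inserted isolated vertices of $\cG'$. The details you add beyond the paper's terse sketch (coloring intra-interval edges arbitrarily, and applying the ``at least $k$ vertices between'' bound to an arbitrary pair of core vertices rather than only consecutive ones) are exactly the right ones and are consistent with the paper's intended argument.
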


\newpage

\subsection{Adding an edge}


\begin{proposition} \label{addlastedge}
Let $\cG$ be an $n$-vertex ordered graph with $n \ge 2$. Let $\cG'$ be obtained from $\cG$ by adding a single vertex after the last vertex of $\cG$, and adding a single edge from the rightmost vertex of $\cG$ to this new vertex. Then $R(\cG)+n \le R(\cG') \le R(\cG) + 2n - 1$.
\end{proposition}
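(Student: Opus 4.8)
The plan is to establish the two inequalities separately: the lower bound by exhibiting an explicit $\cG'$-free $2$-coloring, and the upper bound by a ``find a monochromatic copy, then attach a pendant'' argument.

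For the lower bound $R(\cG)+n\le R(\cG')$, I would $2$-color $\cK_{R(\cG)+n-1}$ with no monochromatic $\cG'$. Write $m=R(\cG)-1$ and fix a $\cG$-free coloring $\phi$ of the first $m$ vertices, which exists by definition of $R(\cG)$. Append a block $B$ of the remaining $n$ vertices on the right, color every edge inside $B$ red, every edge between the old block and $B$ blue, and keep $\phi$ on the old block. The design principle is that the red graph has no edge between the two blocks, so a connected red copy of $\cG'$ would have to sit entirely in the old block (impossible, since $\phi$ is $\cG$-free and $\cG\subseteq\cG'$) or entirely in $B$ (impossible, since $|B|=n<n+1=v(\cG')$). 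For the blue side I would use that the pendant is the \emph{rightmost} vertex of $\cG'$: in any blue copy meeting $B$ the pendant lands in $B$, so the rightmost vertex $r$ of the $\cG$-part is either in $B$ — but then the pendant edge $rp$ lies inside $B$ and is red, a contradiction — or in the old block, which forces the whole $\cG$-part into the old block and yields a blue $\cG$ there, again contradicting that $\phi$ is $\cG$-free. The one delicate point is disconnected $\cG$, where the components of a monochromatic copy could be split across the two blocks; for such $\cG$ the all-red block $B$ should be replaced by a $\cG$-free coloring of $B$ itself, which is available exactly when $n<R(\cG)$.

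For the upper bound $R(\cG')\le R(\cG)+2n-1$, set $N=R(\cG)+2n-1$ and take any $2$-coloring of $\cK_N$. I would first locate a monochromatic copy $C$ of $\cG$: one exists inside the first $R(\cG)$ vertices, so its rightmost vertex $r$ satisfies $r\le R(\cG)$, leaving at least $N-R(\cG)=2n-1$ vertices to its right. If any of these right-hand vertices is joined to $r$ by an edge of the same color as $C$, then appending it as a pendant produces a monochromatic $\cG'$ and we are done. The whole difficulty is therefore the \emph{unextendable} case, where every edge from $r$ to the $\ge 2n-1$ vertices on its right has the opposite color, creating a large opposite-colored star rooted at $r$.

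To exploit this I would choose $C$ so that its rightmost vertex $r$ is as far to the \emph{left} as possible among all monochromatic copies; minimality guarantees that the prefix $\{1,\dots,r-1\}$ contains no monochromatic copy of $\cG$, which pins $r\le R(\cG)$ and preserves the full budget of $\ge 2n-1$ vertices to the right of $r$. In the unextendable case $r$ is joined to all of them in the opposite color, so $r$ can serve as the leftmost vertex of an opposite-colored copy: it then suffices to build, among the $\ge 2n-1$ vertices to the right of $r$, an opposite-colored copy of $\cG$ together with a compatible pendant, which combines with $r$ into an opposite-colored $\cG'$. I expect this completion to be the main obstacle, because the right-hand block is only linear in $n$ and cannot by itself contain a fresh monochromatic $\cG$; the argument must reuse the structure already forced by $C$ and the star, and the surplus of $n-1$ vertices (beyond the $+n$ of the lower bound) is precisely what has to make this re-embedding go through. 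Confirming that $2n-1$ right-hand vertices always suffice to complete the opposite-colored $\cG'$ is the crux of the proof.
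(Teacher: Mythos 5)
Your upper bound contains the genuine gap, and you have in fact named it yourself: after locating one monochromatic copy $C$ with rightmost vertex $r$ and passing to the unextendable case, nothing in the coloring of the $2n-1$ vertices to the right of $r$ is controlled, so there is no way to complete an opposite-colored $\cG'$ there --- an opposite-colored copy of $\cG$ would in general require $R(\cG)$ fresh vertices, not $2n-1$, and taking $r$ leftmost buys nothing because the right-hand block is unstructured. The paper takes a different combinatorial route that you are missing: it greedily extracts $2n-1$ \emph{distinct} monochromatic copies of $\cG$, each inside the first $R(\cG)$ vertices of what remains after deleting the rightmost vertices of the earlier copies (so every copy avoids the last vertex of $\cK_N$). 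By pigeonhole, $n$ of these copies share a color, say red; relabel their rightmost vertices $v_1<\dots<v_n$ and let $v_{n+1}$ be the last vertex. If some $v_iv_j$ with $i<j$ is red, the $i$-th red copy together with the pendant edge $v_iv_j$ is a red $\cG'$ (valid because $v_i$ is the rightmost vertex of its copy and $v_j$ lies to the right of it); otherwise all edges among $\{v_1,\dots,v_{n+1}\}$ are blue, giving a blue $\cK_{n+1}$, which contains every ordered graph on $n+1$ vertices, in particular $\cG'$. The idea you lack is to hunt not for an opposite-colored copy of $\cG$ in a short interval, but for an opposite-colored clique on $n+1$ vertices each of which already heads a same-colored copy, so that every edge among them in either color finishes the proof.

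On the lower bound your construction is identical to the paper's (Proposition \ref{addlastedge-lower}: a $\cG$-free coloring on the first $R(\cG)-1$ vertices, an all-red block $B$ of size $n$, all-blue cross edges), and for connected $\cG$ your case analysis is correct --- on the blue side it is arguably cleaner than the paper's. You are also right that disconnected $\cG$ is the delicate point, but your patch does not work: once $B$ is recolored $\cG$-free it acquires blue edges, and a blue copy of $\cG'$ can then straddle the blocks using the all-blue cross edges (components realized by blue edges inside $A$, the pendant edge blue inside $B$), so the blue half of your analysis collapses rather than the red half being repaired. You are in good company here: the paper proves only the upper bound under Proposition \ref{addlastedge} and retreats, for the lower bound, to the extra hypothesis in Proposition \ref{addlastedge-lower} that the rightmost vertex of $\cG$ is non-isolated; moreover even that proof contains the unjustified inference ``$H$ cannot be contained in $B$ \dots so $u \in A$'', which fails for disconnected $\cG$ --- for $\cG$ consisting of two disjoint edges, a red edge of the $\cG$-free coloring inside $A$ together with any three vertices of the all-red block $B$ already forms a red $\cG'$. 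So your instinct about disconnectedness is sound and your connected-case lower bound stands, but neither your fix nor the paper's extra hypothesis actually closes the disconnected case.
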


\begin{figure}[H]
		\centering
		\begin{tikzpicture}[yscale = 1]
		\coordinate (1) at (1,0);
		\coordinate (2) at (2,0);
		\coordinate (3) at (3,0);
		\coordinate (4) at (4,0);
        \coordinate (5) at (5,0);
		
		\draw (2.5,0) circle [x radius=2, y radius=1];
        
        \node (G) at (2.5,.5) {$G$};
        
        \arc{4}{5}
        
        \foreach \n in {1,...,5} {
			\draw[fill=black] (\n) circle (0.05cm);
		}
		\end{tikzpicture}
		\caption{Graph $\cG'$ created from $\cG$ by adding one vertex and one edge.}
		\label{separate}
	\end{figure}
    
\begin{prf}
Consider an arbitrary 2-edge-coloring of the complete ordered graph $\cK_N$ on $N=R(\cG) + 2n - 1$ vertices. Let $V$ denote its vertex set. By definition the first $R(\cG)$ vertices of $V$ contain a monochromatic copy of $\cG$. Let $v_1$ denote the rightmost vertex of this copy of $\cG$. Then the first $R(\cG)$ vertices of $V \backslash \{v_1\}$ must contain a monochromatic copy of $\cG$ distinct from the first copy of $\cG$ (since the first copy of $\cG$ was incident to $v_1$, and this new copy cannot be incident to $v_1$). Let $v_2$ be the rightmost vertex of this new copy of $\cG$. Again, the first $R(\cG)$ vertices of $V \backslash \{v_1,v_2\}$ contain a distinct copy of $\cG$. We can continue in this fashion to obtain $2n-1$ distinct monochromatic copies of $\cG$ and vertices $\{v_1,\dots,v_{2n-1}\}$ that are the rightmost vertex of the copies of $\cG$, such that the last vertex of $\cK_N$ is disjoint from each copy of $\cG$.

By pigeonhole principle, $n$ of these graphs are the same color. Without loss of generality they are red and their rightmost vertices are $v_1,...,v_n$, so that $v_i$ is to the left of $v_{i+1}$ for each $i \in [n-1]$. Let $v_{n+1}$ denote the last vertex in $\cK_N$. Note that if some edge $v_iv_j$ is red, then the $i$-th copy of $G$ and $v_iv_j$ gives us a red copy of $\cG'$. Otherwise, if $v_iv_j$ is blue for all $1 \le i < j \le n+1$, then $\{v_1,\dots,v_n,v_{n+1}\}$ form a blue copy of $\cK_{n+1}$, which contains a copy of $\cG'$. \end{prf}

We remark that this bound is tight for the monotone path\footnote{The \emph{monotone path} $\cP_n^{mon}$ has vertex set $1<2<...<n$ and edge set $\{j(j+1):1\le j \le n-1\}$.}. Indeed, it is known that  $R(\cP_n^\text{mon}) = (n-1)^2+1$ (see \cite{fox2012}), and adding a single edge at the end gives us $\cP_{n+1}^\text{mon}$. The difference between their Ramsey numbers is exactly $n^2+1-(n-1)^2-1=2n-1.$

\begin{proposition} \label{addlastedge-lower}
Let $\cG$ and $\cG'$ be as described in the previous proposition, with the additional constraints that $n \ge 2$ and that the rightmost vertex of $\cG$ is not an isolated vertex.

Then $R(\cG)+n \le R(\cG')$.
\end{proposition}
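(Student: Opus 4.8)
The plan is to prove the lower bound by constructing a $2$-coloring of $\cK_{N}$ with $N:=R(\cG)+n-1$ that contains no monochromatic $\cG'$; this gives $R(\cG')>N$, i.e. $R(\cG')\ge R(\cG)+n$. Write $r$ for the rightmost vertex of $\cG$ and $r'$ for the new last vertex of $\cG'$, so that $\cG'$ is $\cG$ together with the pendant edge $rr'$. Since $N\ge R(\cG)$, monochromatic copies of $\cG$ are forced to appear, so the whole point is to arrange that none of them can be completed by a monochromatic pendant edge at the image of $r$. The non-isolation hypothesis enters precisely through the right end of $\cG'$: the two rightmost vertices $r<r'$ span an edge, and $r$ already has a neighbour to its left inside $\cG$. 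Note that the disjoint-union bound of Lemma~\ref{ordered-disjoint-union} is far too weak to deliver the full additive term $n$, so a direct construction on all $N$ vertices is needed.

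First I would take a $\cG$-free coloring $c$ of $\cK_{R(\cG)-1}$ on an initial block $A$, append a buffer $B$ of $n$ further vertices, colour every edge between $A$ and $B$ blue, and colour $B$ internally in a way to be chosen. Any monochromatic $\cG'$ must meet $B$, since $A$ is $\cG$-free and $\cG'\supseteq\cG$; splitting such a copy at the $A/B$ boundary into a prefix (its vertices in $A$) and a suffix (its vertices in $B$) drives the entire analysis, and the suffix always has between $1$ and $n$ vertices. For the blue colour this split is already decisive once $B$ carries no blue edge: the suffix must be realised in blue inside $B$, but for every split leaving at least two vertices in $B$ the suffix contains the pendant edge $rr'$, while the only splits with a smaller suffix force a blue copy of $\cG$ inside $A$. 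Hence colouring $B$ entirely red kills every blue $\cG'$ outright.

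The remaining, and main, difficulty is the red colour. A red $\cG'$ can use no $A$–$B$ edge, so its prefix and suffix are joined by no edge of $\cG'$; equivalently, a red copy forces a cut of $\cG'$ at the split position. Here the two hypotheses do genuine work: the pendant edge forbids a cut at position $n$, and non-isolation of $r$ forbids a cut at position $n-1$ (such a cut would leave $r$ with no left-neighbour). Consequently, if $\cG$ is ordered-connected (admits no nontrivial cut of its vertex ordering) there is no admissible split whatsoever, and the simple choice ``$B$ entirely red, $A$–$B$ edges blue'' already produces a $\cG'$-free coloring. This settles the ordered-connected case cleanly, and it is the step where non-isolation is indispensable.

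The obstacle is a $\cG$ that splits at some earlier cut $t\le n-2$, say $\cG=\cG_1+\cG_2$ with $\cG_2$ the rightmost ordered component (itself ordered-connected, still with non-isolated rightmost vertex): a red copy could then place $\cG_1$ red inside $A$ and the tail $\cG_2':=\cG_2+\text{(pendant)}$ red inside $B$, so $B$ can no longer be a red clique. I expect the resolution to be a recursive construction along the ordered-component decomposition of $\cG$: colour $B$ so that it contains no red copy of the tail $\cG_2'$ (feasible because, once a cut exists, $\cG_2'$ is a proper and hence strictly smaller ordered graph), while keeping the blue side safe by a careful sizing of the prefix region so that the small suffixes — in particular the single-edge suffix coming directly from the pendant — have no room to be preceded by a monochromatic edge, exactly as in the model case of two disjoint edges. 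Reconciling these two demands on $B$, the red-tail-avoidance and the blue single-edge obstruction, is the crux of the argument; non-isolation is what guarantees that the fatal cut never sits at the pendant position, so that the suffix one must block in the buffer always has at least three vertices.
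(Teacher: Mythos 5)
Your construction is exactly the paper's: let $A$ be the first $R(\cG)-1$ vertices with a $\cG$-free coloring, let $B$ be the last $n$ vertices forming a red clique, and color all $A$--$B$ edges blue; your blue-case analysis also matches the paper's. The genuine gap is that you never finish the red case. You prove the statement only when $\cG'$ admits no cut at a position $t\le n-2$ (your ``ordered-connected'' case, where the pendant edge and the non-isolation hypothesis kill the cuts at positions $n$ and $n-1$), and for decomposable $\cG$ you only gesture at a recursive recoloring of $B$ (``I expect the resolution to be\dots'') without defining it or verifying either of the two demands you place on it. This is a real missing idea, not a routine detail: to avoid a red copy of the tail $\cG_2'$, the coloring of $B$ must contain blue edges, and those blue edges inside $B$, together with the all-blue cross edges, can create blue copies of $\cG'$ that your blue argument --- which relied on $B$ being entirely red --- no longer excludes. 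Nothing in the proposal reconciles these demands, and you acknowledge as much, so as written the proof is incomplete.

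It is worth comparing this with the paper's own proof, which runs the identical construction and closes the red case in one line: letting $w,v$ be the two rightmost vertices of a monochromatic copy $H$ and $u$ a left-neighbour of $v$, it deduces $v,w\in B$ and then infers ``$u\in A$'' from $|H|=n+1>n=|B|$. That inference only shows that \emph{some} vertex of $H$ lies in $A$, not that $u$ does; it is valid precisely in your ordered-connected situation and fails in exactly the decomposable case you isolated. Concretely, take $\cG$ on vertices $1<2<3$ with the single edge $\{2,3\}$, so $R(\cG)=3$ by Proposition \ref{vertex on end}; in the coloring on $R(\cG)+n-1=5$ vertices, the vertices $\{1,3,4,5\}$ carry a red copy of $\cG'$ (the isolated leftmost vertex sits in $A$, the rest in the red clique $B$), so the common construction cannot work verbatim for such $\cG$. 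The proposition itself remains true in this example --- $R(\cG')=6$ by Proposition \ref{vertex on end} applied to the monotone path on three vertices, whose Ramsey number is $(3-1)^2+1=5$ --- but it needs a different argument. So your diagnosis of the red case is in fact sharper than the paper's proof, which silently glosses over the very scenario you flag; what your proposal lacks, however, is any completed argument for decomposable $\cG$, and the example shows that completing it requires genuinely modifying the coloring rather than analyzing the given one more carefully.
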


\begin{proof}
We proceed by construction. Consider the complete ordered graph $\cK_N$ on $N = R(\cG)+n-1$ vertices. Let the first $R(\cG)-1$ vertices belong to set $A$ and let the last $n$ vertices belong to set $B$. Let $E_A$ be the set of edges contained in $A$, let $E_B$ be the set of edges contained in $B$, and let $E_C$ be the edges between $A$ and $B$. By definition we can color the edges in $E_A$ so that no monochromatic copy of $\cG$ appears among $A$. Color all edges in $E_B$ red and color all edges in $E_C$ blue.

Assume for sake of contradiction that we have a monochromatic copy of $\cG'$, denoted $H$. Let $w$ denote the rightmost vertex in $H$, let $v$ denote the second-rightmost vertex in $H$, and let $u$ be the vertex left of $v$ that is incident to $v$. Clearly $H-\{w\}$ (a copy of $\cG$) cannot be contained in $A$ due to the counter-coloring, so $v \in B$. Since $w$ is right of $v$, then $w \in B$ as well. Additionally, $n \ge 2$ so $u \neq v$, and $H$ cannot be contained in $B$ since $|H| = n+1 > n = |B|$, so $u \in A$. We now have $uv$ is a blue arc and that $vw$ is a red arc, contradicting the assumption that $H$ was monochromatic.
\end{proof}

\section{Ramsey numbers of ordered graphs via extremal functions of matrices}

Balko et al \cite{balko2015} used bounds on extremal functions of forbidden 0-1 matrices to show that the Ramsey number of the alternating path of size $n$ grows linearly with $n$, and Neidinger and West \cite{Neidinger2018+} generalized these results to stitched ordered graphs. In this section, we generalize their method to obtain size-linear bounds on the Ramsey numbers of many other families of ordered graphs.

Extremal functions of forbidden 0-1 matrices have been applied to a variety of other problems, including bounding the complexity of an algorithm for finding a minimal rectilinear path in a grid with obstacles \cite{mitchell} in the first paper on the topic. They have also been used to bound the maximum number of unit distances in a convex $n$-gon \cite{ngon} and the maximum possible lengths of sequences avoiding forbidden subsequences \cite{Pettie2011}. The most well-known application of 0-1 matrix extremal functions is Marcus and Tardos' solution to the Stanley-Wilf conjecture \cite{marcus2004}, which used linear upper bounds on extremal functions of forbidden permutation matrices to prove exponential upper bounds on the number of permutations of $[n]$ that avoid a given forbidden permutation. 

\subsection{Method outline}

Let $\cG$ be an interval 2-chromatic ordered graph with vertex set $V=I_1\cup I_2$. Its \emph{associated matrix} $P_\cG$ is the $|I_1|$-by-$|I_2|$ matrix defined by
\[P_\cG(i,j)=\begin{cases} 1 &\mbox{ if }ij\in E(G)\\ 0 &\mbox{ otherwise}\end{cases}.\]

We say that a zero-one matrix $A$ \emph{contains} another zero-one matrix $P$ if some submatrix of $A$ is either equal to $P$ or can be turned to $P$ by changing some ones to zeroes. Otherwise, we say that $A$ \emph{avoids} $P$. We define the extremal function $\ex(n, P)$ to be the maximum number of ones in an $n$-by-$n$ zero-one matrix that avoids $P$. In other words, $\ex(n, P)$ is one less than the minimum number of ones that force an $n$-by-$n$ zero-one matrix to contain $P$. 

With these two definitions in mind, we can now give an upper bound for the Ramsey number of an ordered graph $\cG$ in terms of the extremal number of its associated matrix $P_\cG$.

\begin{lemma}\label{lem:01mat:ramsey}
For any ordered interval 2-chromatic graph $\cG$, it holds that $$\left(R(\cG\right)-1)^2\le 8 \ex\left((R(\cG)-1)/2, P_\cG\right).$$
\end{lemma}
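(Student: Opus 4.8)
The plan is to take a $\cG$-free $2$-coloring of $\cK_N$ with $N=R(\cG)-1$ and read off the colors of the edges crossing from the left half of the vertex set to the right half as two complementary $0$-$1$ matrices, each of which must avoid $P_\cG$. This is the method of Balko et al.\ adapted to an arbitrary interval $2$-chromatic $\cG$.

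First I would set $N=R(\cG)-1$ and fix a red/blue coloring of the edges of $\cK_N$ containing no monochromatic copy of $\cG$, which exists by the definition of $R(\cG)$. Assume for the moment that $N$ is even and split the vertex set into the left half $L$ (the first $m:=N/2$ vertices) and the right half $R$ (the last $m$ vertices). For each color $c$, define an $m\times m$ matrix $A_c$ by setting $A_c(i,j)=1$ precisely when the edge between the $i$-th vertex of $L$ and the $j$-th vertex of $R$ has color $c$. Since every crossing edge receives exactly one of the two colors, $A_{\mathrm{red}}$ and $A_{\mathrm{blue}}$ partition the all-ones $m\times m$ grid, so the numbers of ones they contain sum to $m^2$.

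The crux is the claim that each $A_c$ avoids $P_\cG$. Suppose instead that $A_c$ contains $P_\cG$: there are rows $r_1<\dots<r_{|I_1|}$ and columns $s_1<\dots<s_{|I_2|}$ whose induced submatrix has a $1$ in every position where $P_\cG$ does. Map the $a$-th vertex of $I_1$ to the $r_a$-th vertex of $L$ and the $b$-th vertex of $I_2$ to the $s_b$-th vertex of $R$. Because $\cG$ is interval $2$-chromatic with $V(\cG)=I_1\cup I_2$ and every vertex of $L$ precedes every vertex of $R$, this map is order-preserving; and by the definition of $P_\cG$ together with the notion of matrix containment, every edge of $\cG$ maps to a crossing edge of color $c$. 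This produces a monochromatic copy of $\cG$, contradicting our choice of coloring.

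Hence both $A_{\mathrm{red}}$ and $A_{\mathrm{blue}}$ avoid $P_\cG$, so each has at most $\ex(m,P_\cG)$ ones, giving $m^2\le 2\,\ex(m,P_\cG)$. Substituting $m=(R(\cG)-1)/2$ and multiplying through by $4$ yields $(R(\cG)-1)^2\le 8\,\ex((R(\cG)-1)/2,P_\cG)$, as desired. The main obstacle is the translation in the crux claim: one must verify that $0$-$1$ matrix containment really does yield an \emph{order-preserving} monochromatic embedding of $\cG$, which is exactly the point at which interval $2$-chromaticity is used (it guarantees $I_1$ can be routed entirely into $L$ and $I_2$ entirely into $R$). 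The only remaining loose end is parity: when $N$ is odd one discards the middle vertex and works with $m=\lfloor N/2\rfloor$, the resulting slack being comfortably absorbed by the constant $8$ using that $\ex(\cdot,P_\cG)$ is nondecreasing in its first argument.
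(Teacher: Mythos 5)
Your proposal is correct and follows essentially the same argument as the paper: take a $\cG$-free $2$-coloring of $\cK_{R(\cG)-1}$, split the vertex set into two halves, observe that the $0$-$1$ matrix of the majority color on the crossing edges has at least $N^2/8$ ones yet must avoid $P_\cG$ (your verification that matrix containment yields an order-preserving monochromatic embedding, using interval $2$-chromaticity, is exactly the point the paper leaves implicit). Your handling of the odd-$N$ parity is, if anything, slightly more explicit than the paper's, which silently works with intervals of lengths $\lfloor N/2\rfloor$ and $\lceil N/2\rceil$.
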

\begin{prf}
Consider a $\cG$-free 2-coloring of the ordered complete graph $\cK_N$ on $N=R(\cG)-1$ vertices for some integer $N$. Split the vertex set into two intervals $I_1$ and $I_2$ of length $\lfloor N/2\rfloor$ and $\lceil N/2\rceil$, and let $\cH=G[I_1,I_2]$ be the interval 2-chromatic graph induced on $I_1\cup I_2$. Without loss of generality at least half of its edges are red. Then the $N/2$-by-$N/2$ matrix $P_\cH$ associated to the red graph has at least $N^2/8$ ones and is $\cG$-free. Hence $N^2/8\le \ex(N/2,P_\cG)$.
\end{prf}

\subsection{Matrix operations and extremal functions}

Below, we exhibit several operations that can be performed on a fixed 0-1 matrix $P$ with extremal function linear in $n$ to yield a new 0-1 matrix such that this linearity is preserved. Then the resulting interval 2-chromatic ordered graph has Ramsey number that is linear in the number of vertices. Most of these operations are from existing literature, but for our first operation, we need a tighter bound than was given in the literature, which we will prove below.

The proof below is similar to the original bound from Tardos \cite{tardos2005} which gave a bound of $\ex(n, P') \le k^2 \ex(n, P)+4 k n$, but we modify the method to get a better coefficient for $\ex(n, P)$. 

\begin{figure}[H]
    \centering
\caption{}
\label{fig:matrices}
    \begin{subfigure}[H]{0.3\textwidth}
        \centering
        \includegraphics{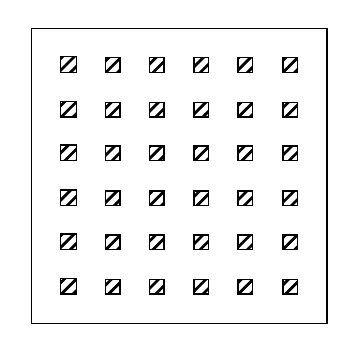}
        \caption{Operation for Lemma \ref{addblanks}}
        \label{fig:addblanks}
    \end{subfigure}
    ~ 
    \begin{subfigure}[H]{0.3\textwidth}
        \centering
        \includegraphics{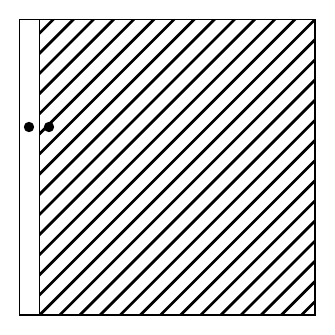}
        \caption{Operation for Lemma \ref{addlast}}
        \label{fig:addlast}
    \end{subfigure}
    \vspace{20pt}
    \begin{subfigure}[H]{0.3\textwidth}
        \centering
        \includegraphics{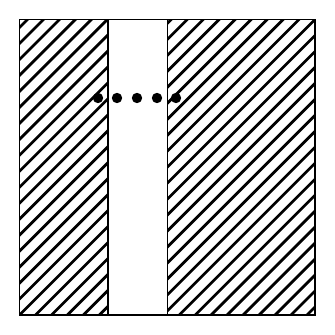}
        \caption{Operation for Lemma \ref{addmid}}
        \label{fig:addmid}
    \end{subfigure}
    ~
    \begin{subfigure}[H]{0.3\textwidth}
        \centering
        \includegraphics{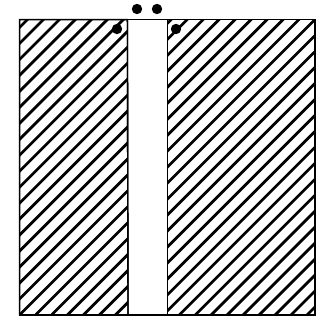}
        \caption{Operation for Lemma \ref{addmidup}}
        \label{fig:addmidup}
    \end{subfigure}
    \begin{subfigure}[H]{0.3\textwidth}
        \centering
        \includegraphics{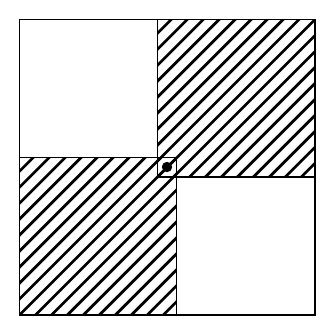}
        \caption{Operation for Lemma \ref{diagatt}}
        \label{fig:diagatt}
    \end{subfigure}

\end{figure}

\begin{lemma} \label{addblanks}
Suppose that the matrix $P'$ is obtained from $P$ by inserting $k-1$ empty rows between every adjacent pair of rows in $P$, and $k-1$ empty columns between every adjacent pair of columns in $P$, as well as $k-1$ empty rows and $k-1$ empty columns before the first rows and columns and after the last rows and columns (see Figure \ref{fig:addblanks}). Then $\ex(n, P) \le \ex(n, P') \le k \ex(n, P) + 6 k n$.
\end{lemma}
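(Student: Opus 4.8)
The plan is to prove the two inequalities separately, the left one being routine and the right one containing all the work. For the lower bound $\ex(n,P)\le\ex(n,P')$, observe that $P$ is a submatrix of $P'$ (delete the inserted empty rows and columns). Hence if a 0-1 matrix $A$ contains $P'$ it also contains $P$, so every $A$ avoiding $P$ automatically avoids $P'$; taking $A$ to be an extremal $P$-avoiding matrix gives $\ex(n,P)\le\ex(n,P')$.

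For the upper bound, I would fix an $n$-by-$n$ matrix $A$ avoiding $P'$ and bound its number of ones. First I would \emph{strip the margins}: delete the first $k-1$ and last $k-1$ rows and the first $k-1$ and last $k-1$ columns, which discards at most $4(k-1)n<4kn$ ones and leaves an inner matrix $A_0$. The point of this deletion is that any copy of $P$ found inside $A_0$ now has at least $k-1$ rows above and below and $k-1$ columns to either side available, i.e.\ the boundary blanks of $P'$ can always be realized. Next I would partition $A_0$ into $k^2$ \emph{residue classes}: for $\alpha,\beta\in\{1,\dots,k\}$, let $A_0^{(\alpha,\beta)}$ be the submatrix on the rows whose index is $\equiv\alpha$ and columns whose index is $\equiv\beta\pmod k$. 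Any two consecutive selected rows (resp.\ columns) in such a class differ by a multiple of $k$, hence by at least $k$. Consequently a copy of $P$ inside any single class $A_0^{(\alpha,\beta)}$ uses rows and columns spaced at least $k$ apart and sitting away from the boundary, so it extends to a copy of $P'$ in $A$ by placing the required $k-1$ blank rows and columns in the gaps and margins. Since $A$ avoids $P'$, every class $A_0^{(\alpha,\beta)}$ avoids $P$, and each class has size at most $\lceil n/k\rceil$ in both dimensions, so it contains at most $\ex(\lceil n/k\rceil,P)$ ones. Summing over all $k^2$ classes yields $\ex(n,P')\le k^2\,\ex(\lceil n/k\rceil,P)+4kn$, which is essentially the bound of Tardos.

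The whole improvement lies in replacing the coefficient $k^2\,\ex(\lceil n/k\rceil,P)$ by $k\,\ex(n,P)$, and this is the step I expect to be the main obstacle. I would obtain it from \emph{superadditivity} of the extremal function, namely $\ex(a+b,P)\ge\ex(a,P)+\ex(b,P)$; iterating gives $\ex(km,P)\ge k\,\ex(m,P)$, and with $m=\lfloor n/k\rfloor$ and monotonicity of $\ex$ in the dimension this yields $k\,\ex(\lfloor n/k\rfloor,P)\le\ex(km,P)\le\ex(n,P)$. Absorbing the ceiling by $\ex(\lceil n/k\rceil,P)\le\ex(\lfloor n/k\rfloor,P)+O(n/k)$ then gives $k^2\,\ex(\lceil n/k\rceil,P)\le k\,\ex(n,P)+2kn$, and combined with the $4kn$ lost to the margins this produces exactly $\ex(n,P')\le k\,\ex(n,P)+6kn$.

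The delicate point is justifying superadditivity for an arbitrary forbidden pattern $P$. The natural construction places extremal avoiders $M_a$ and $M_b$ into disjoint diagonal blocks of an $(a+b)$-by-$(a+b)$ matrix; a copy of $P$ meeting both blocks would force a splitting of $P$ into an upper-left part and a lower-right part, i.e.\ a \emph{sum-decomposition} of $P$. If $P$ admits no such decomposition the block-diagonal matrix avoids $P$ and we are done; if it does, I would instead use the block-\emph{anti}-diagonal placement, which avoids $P$ unless $P$ is skew-decomposable. The crux is therefore the dichotomy that a pattern cannot be simultaneously sum- and skew-decomposable (the analogue of the fact that the top of a permutation's substitution decomposition has a single type), so that at least one of the two placements always works. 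I would prove this by examining how an extreme one of $P$ would have to lie in both decompositions and deriving a contradiction, taking care of degenerate cases such as all-zero rows or columns.
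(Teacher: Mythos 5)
Your overall skeleton is exactly the paper's: both proofs strip the boundary ones (the paper deletes the ones in the first and last $k$ rows and columns and then trims up to $k$ more rows and columns so the inner dimension is divisible by $k$, which is how it keeps the loss at exactly $6kn$ and avoids your stray $+k^2$ from the ceilings), both partition the remainder into the $k^2$ residue classes modulo $k$, observe each class must avoid $P$ lest $A$ contain $P'$, and both convert $k^2\ex(n/k,P)$ into $k\,\ex(n,P)$ via superadditivity of $\ex(\cdot,P)$. The one real difference is that the paper simply cites Pach and Tardos \cite{Pach2006} for superadditivity, whereas you attempt to prove it from scratch --- and that is where your argument has a genuine gap.

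Your proposed dichotomy, that a pattern cannot be simultaneously sum- and skew-decomposable, is correct for permutation matrices (and more generally whenever the first and last rows and columns of $P$ each contain a one), but it is false for arbitrary $0$-$1$ matrices, and the lemma is stated --- and used later in the paper --- for arbitrary $P$; indeed the very matrices this operation produces are full of empty boundary rows and columns, so the ``degenerate cases'' you defer are the main case, not an edge case. Concretely, take $P=\bigl(\begin{smallmatrix}1\\0\end{smallmatrix}\bigr)$, a one with an empty row below it. A matrix avoids this $P$ if and only if all its ones lie in its bottom row, so $\ex(n,P)=n$. In the block-diagonal placement, the top block's ones sit in its bottom row with all the rows of the lower block beneath them, so the construction contains $P$; the anti-diagonal placement fails for the same reason. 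Thus \emph{both} of your placements contain $P$ even though superadditivity holds (with equality) --- no two-block placement can certify it, so the failure is in the construction itself, not merely in your proof of the dichotomy. (The problem is not confined to decompositions with a zero-width part, either: the $4\times 4$ matrix that is zero except for ones at positions $(2,2)$ and $(3,3)$ admits a nondegenerate sum-decomposition with splits $r=c=2$ and a nondegenerate skew-decomposition with splits $r'=3$, $c'=1$.) To repair your write-up you should either invoke the superadditivity result of \cite{Pach2006} as the paper does, or run your dichotomy argument only for patterns whose boundary rows and columns are nonempty and handle empty boundary lines by a separate reduction such as $\ex(n,P)\le \ex(n,P^{-})+n$, where $P^{-}$ deletes an empty boundary row; the latter yields superadditivity only up to an additive linear error, which still gives the lemma's intended $k\,\ex(n,P)+O(kn)$ conclusion but not the literal constant $6kn$. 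Everything else in your argument --- the containment direction for the first inequality, the margin stripping, the spacing of same-residue rows by at least $k$, and the extension of a copy of $P$ in a residue class to a copy of $P'$ in $A$ --- is correct and matches the paper.
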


\begin{prf}
The first inequality follows from the fact that $\ex(n, M') \le \ex(n, M)$ whenever $M$ contains $M'$. For the second inequality, suppose that $A$ is a maximal weight $n \times n$ matrix that avoids $P'$. 
First delete any ones in the first and last $k$ rows and columns of $A$, and then delete the ones from up to an additional $k$ rows and columns so that the remaining ones in $A$ are contained in an $m \times n$ submatrix $B$ with $m$ divisible by $k$. At most $6 k n$ ones are deleted in this process.

For $0 \le a, b < k$, let $B_{a b}$ be the submatrix obtained from $B$ by restricting to the row and column indices $i$ and $j$ satisfying $i \mod k = a$ and $j \mod k = b$. The sum of the number of ones in all the submatrices $B_{a b}$ is the number of ones in $B$. Moreover every submatrix $B_{a b}$ must avoid $P$, or else $A$ would contain $P'$. 

Putting everything together, we have $\ex(n, P') = w(A) \le w(B) + 6 k n \le k^2 \ex(m/k, P)+6 k n$. Using the result of Pach and Tardos \cite{Pach2006} that $\ex(n, P)$ is super-additive, we have $\ex(n, P') \le k \ex(m, P)+6 k n \le k \ex(n, P) + 6 k n$.
\end{prf}

Now we mention some relevant results from the literature with citations, but without proof.

\begin{lemma}[\cite{tardos2005}]\label{addlast}
Suppose that the matrix $P'$ is obtained from $P$ by adding a first column to $P$ with a single one next to a one of $P$ (see Figure \ref{fig:addlast}). Then $\ex(n, P) \le \ex(n, P') \le \ex(n, P)+n$.
\end{lemma}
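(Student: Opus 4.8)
The plan is to prove the two inequalities separately. The lower bound $\ex(n,P)\le\ex(n,P')$ is immediate from monotonicity: deleting the new first column of $P'$ recovers $P$, so $P'$ contains $P$, and as noted in the proof of Lemma~\ref{addblanks}, $\ex(n,M')\le\ex(n,M)$ whenever $M$ contains $M'$. The real content is therefore the upper bound $\ex(n,P')\le\ex(n,P)+n$.

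For the upper bound, I would take a maximum-weight $n\times n$ matrix $A$ that avoids $P'$, so $w(A)=\ex(n,P')$, and then delete a small set of ones so as to destroy every copy of $P$. The natural choice is to delete the leftmost one in each nonempty row of $A$; this removes at most $n$ ones and produces a matrix $A'$ with $w(A')\ge w(A)-n$ and $A'\le A$ entrywise. It then suffices to show that $A'$ avoids $P$, since this gives $w(A)\le w(A')+n\le\ex(n,P)+n$.

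To show $A'$ avoids $P$, I would argue by contradiction and reconstruct a copy of $P'$ inside $A$. Suppose $A'$ contains $P$ using rows $i_1<\dots<i_p$ and columns $j_1<\dots<j_q$, where $P$ is $p\times q$ and $r$ denotes the row in which the extra one of $P'$ lies. The hypothesis that the new one is placed next to a one of $P$ means precisely that $P$ has a one in the first column of row $r$, so the embedding satisfies $A'(i_r,j_1)=1$. Since $A'$ is obtained from $A$ by deleting the leftmost one of each row, and the entry $(i_r,j_1)$ survived this deletion, the leftmost one of row $i_r$ of $A$ must lie in some column $\ell$ with $\ell<j_1$, and $A(i_r,\ell)=1$. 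Now consider in $A$ the rows $i_1<\dots<i_p$ together with the columns $\ell<j_1<\dots<j_q$: the entry $A(i_r,\ell)=1$ realizes the single one of the new first column of $P'$, while the columns $j_1<\dots<j_q$ still realize $P$ because $A\ge A'$ entrywise. Hence $A$ contains $P'$, contradicting the choice of $A$, and so $A'$ avoids $P$ as required.

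The step I expect to require the most care is verifying that the recovered one at $(i_r,\ell)$ genuinely lies strictly to the left of the entire embedded copy of $P$, that is, that $\ell<j_1$. This is exactly where the ``next to a one of $P$'' hypothesis is used: it guarantees that row $r$ carries a one in the \emph{first} column of $P$, so that the leftmost column $j_1$ of the copy holds a surviving one in row $i_r$, which forces the deleted leftmost one to lie strictly to its left. Without this placement the deleted one could fall among the columns $j_1,\dots,j_q$ of the copy and fail to extend it to a copy of $P'$.
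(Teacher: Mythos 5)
Your argument is correct, and since the paper states Lemma~\ref{addlast} without proof (citing Tardos~\cite{tardos2005}), the relevant comparison is with the cited source: your proof---deleting the leftmost one of each row of a $P'$-avoiding extremal matrix and reconstructing a copy of $P'$ from any surviving copy of $P$ via the ``next to a one of $P$'' hypothesis---is essentially Tardos's original argument. In particular, you correctly identify the key point that the adjacency hypothesis forces $P(r,1)=1$, so the deleted leftmost one of row $i_r$ lies strictly left of $j_1$ and extends the copy of $P$ to a copy of $P'$.
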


\begin{lemma}[\cite{tardos2005}] \label{addmid}
Suppose that the matrix $P'$ is obtained from $P$ by adding $t$ extra columns to $P$ between two columns of $P$, such that each column has a single one in the same row and the newly introduced ones have a one from $P$ next to them on both sides (see Figure \ref{fig:addmid}), then $\ex(n, P) \le \ex(n, P') \le (t+1)\ex(n, P)$.
\end{lemma}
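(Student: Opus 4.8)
The plan is to prove the trivial first inequality from monotonicity of $\ex$ under containment, and to obtain the upper bound $\ex(n,P')\le (t+1)\ex(n,P)$ by partitioning the ones of an extremal $P'$-avoiding matrix into $t+1$ classes, each of which avoids $P$. The first inequality is immediate: since $P'$ contains $P$ (delete the $t$ inserted columns), any matrix avoiding $P$ also avoids $P'$, so the class of $P$-avoiders is contained in the class of $P'$-avoiders and hence $\ex(n,P)\le \ex(n,P')$.

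For the upper bound, let $A$ be an $n\times n$ matrix avoiding $P'$ with $w(A)=\ex(n,P')$ ones, and write $r$ for the distinguished row of $P$ and $j,j+1$ for the two adjacent columns of $P$ whose ones in row $r$ sandwich the inserted block. The key device is a coloring of the ones of $A$ by $t+1$ colors: to a one at position $(i,c)$ I assign the color $N_i(c)\bmod (t+1)$, where $N_i(c)$ is the number of ones in row $i$ of $A$ strictly to the left of column $c$. I would then argue that each color class, viewed as an $n\times n$ $0$--$1$ matrix, avoids $P$; summing the weights over the $t+1$ classes gives $w(A)\le (t+1)\ex(n,P)$, as desired.

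To establish that each class avoids $P$, suppose for contradiction that some class contains a copy of $P$, with row $r$ mapped to a row $\rho$ of $A$ and columns $j,j+1$ mapped to columns $\gamma_0<\gamma_1$. These are two ones of row $\rho$ lying in the \emph{same} color class, so $N_\rho(\gamma_0)\equiv N_\rho(\gamma_1)\pmod{t+1}$. Writing $s$ for the number of ones of row $\rho$ strictly between $\gamma_0$ and $\gamma_1$, we have $N_\rho(\gamma_1)=N_\rho(\gamma_0)+1+s$, whence $s\equiv t\pmod{t+1}$ and in particular $s\ge t$. Because $\gamma_0$ and $\gamma_1$ are the images of two \emph{adjacent} columns of $P$, no other column used by the embedded $P$ lies strictly between them, so any $t$ of these $s$ intervening ones occupy $t$ fresh columns strictly between $\gamma_0$ and $\gamma_1$. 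Adjoining these $t$ columns, each contributing its one in row $\rho$, to the copy of $P$ produces a copy of $P'$ inside $A$, contradicting the choice of $A$.

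The step I expect to require the most care is this reconstruction: checking that the $t$ intervening ones really sit in columns not already used by the embedded $P$ (which is exactly where the adjacency of columns $j,j+1$ in $P$ is used) and that the augmented configuration matches $P'$ on the nose, including that the inserted columns carry only the single one in row $\rho$. A secondary point to state cleanly is that the modular bookkeeping is legitimate because the coloring is defined per row and is only ever compared at two ones lying in the common row $\rho$.
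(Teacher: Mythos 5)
Your proof is correct. Note that the paper itself does not prove this lemma --- it is imported from Tardos \cite{tardos2005} with the explicit remark that such results are cited ``without proof'' --- and your argument is essentially Tardos's original one: color each one of a $P'$-avoiding matrix $A$ by the number of ones to its left in its row modulo $t+1$, observe that two same-colored ones in a common row have at least $t$ ones of $A$ strictly between them, and use the adjacency of the two distinguished columns of $P$ to adjoin $t$ fresh intervening columns and reconstruct a copy of $P'$, so that each of the $t+1$ color classes avoids $P$ and $w(A)\le (t+1)\ex(n,P)$. The modular bookkeeping is sound (you only compare the counter at two ones in the same row, and $N_\rho(\gamma_1)=N_\rho(\gamma_0)+1+s$ since the one at $\gamma_0$ itself is counted), and your one flagged worry is actually moot: under the paper's notion of containment, a submatrix need only be turnable into $P'$ by changing ones to zeroes, so the inserted columns are permitted to carry extra ones outside row $\rho$ --- no verification that they carry ``only the single one'' is needed.
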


\begin{lemma}[\cite{Keszegh2009}]\label{addmidup}
Let $P$ be a 0-1 matrix with two ones in the first row in columns $m$ and $m+1$. Define $P'$ to be the matrix obtained from $P$ by adding two empty columns between $m$ and $m+1$, and then adding a new first row with exactly two ones, both in the new columns (see Figure \ref{fig:addmidup}). Then $\ex(n, P) \le \ex(n, P') \le c \ex(n, P)$ for some absolute constant $c$.
\end{lemma}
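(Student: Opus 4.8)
The plan is to follow the charging paradigm of Tardos \cite{tardos2005} and Keszegh \cite{Keszegh2009}. The first inequality $\ex(n,P)\le\ex(n,P')$ is immediate: deleting the new top row and the two inserted columns from $P'$ recovers $P$, so $P'$ contains $P$, and the monotonicity of $\ex$ under containment (the fact already used in Lemma \ref{addblanks}) applies. Before attacking the upper bound I would record one cheap but useful reduction: since the first row of $P$ contains two ones in a single row, $P$ contains the $1\times 2$ all-ones pattern, whence $\ex(n,P)\ge\ex(n,[1\,1])=n$. Consequently any additive error of size $O(n)$ incurred below is absorbed into the multiplicative constant $c$, and it suffices to prove a bound of the shape $\ex(n,P')\le c'\ex(n,P)+O(n)$.

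The heart of the matter is a reformulation of $P'$-avoidance, which I would establish first. Writing $P$ as an $r\times s$ matrix with first-row ones in columns $m$ and $m+1$, a matrix $A$ contains $P'$ if and only if there is a copy of $P$ in $A$ — say in rows $i_1<\cdots<i_r$ and columns $j_1<\cdots<j_s$, so that the first-row anchors sit in columns $a:=j_m$ and $b:=j_{m+1}$ of row $i_1$ — together with two ones lying in a common row strictly above $i_1$ and strictly between columns $a$ and $b$. (The two inserted columns are empty in every $P$-row, and no column of $P$ lies strictly between the consecutive $m,m+1$, so the two cap ones are free to occupy any columns of $A$ in the open interval $(a,b)$.) Hence $A$ \emph{avoids} $P'$ exactly when, for every copy of $P$ in $A$, the open region above its first row and between its two first-row anchors contains no row with two or more ones. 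This ``no stacked pair above and between the anchors'' condition is the only feature of $P'$ that the argument needs.

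With this in hand I would split the ones of $A$ into a \emph{structural} part and a \emph{cap} part. For the structural part one extracts from $A$ a subset of ones that provably avoids $P$, bounding its size by $\ex(n,P)$ via a greedy top-to-bottom peeling that repeatedly pulls out copies of $P$ and charges them to disjoint column blocks, in the style of the residue-class decomposition of Lemma \ref{addblanks}. The cap part consists of the ones that could serve as a top-row one of a $P'$; the reformulation forces these to be sparse, since above each structural copy of $P$ at most one one may occur between its anchors, so the cap ones should be chargeable, at a bounded rate, either to the structural copies below them or to $O(n)$ boundary terms. Transposing at the outset (which turns the added top row into an added first column with two ones nested between the two ones of the first column of $P^{T}$, and leaves $\ex$ unchanged) may make this bookkeeping cleaner by aligning the operation with the column operations of \cite{tardos2005}.

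The main obstacle I anticipate is precisely the cap-counting step. Unlike the column operations of Lemmas \ref{addlast}--\ref{addmid}, where each inserted one is pinned between two prescribed ones of a fixed copy of $P$ and can be charged locally, here the two cap ones must be \emph{simultaneously} nested inside the anchor interval of a single copy of $P$ lying below them, and copies of $P$ overlap heavily. Making the charging injective up to a constant — so that no structural copy is blamed for more than $O(1)$ cap ones — is where the careful block analysis of Keszegh \cite{Keszegh2009} is required, and where the value of the absolute constant $c$ is pinned down; the super-additivity of $\ex$ (Pach--Tardos \cite{Pach2006}) would then be used, as in Lemma \ref{addblanks}, to reassemble the block-wise estimates into the stated inequality.
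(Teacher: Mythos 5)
Your easy steps are sound: $P'$ contains $P$, so $\ex(n,P)\le\ex(n,P')$; your reformulation of $P'$-containment (a copy of $P$ plus two ones in a common row strictly above its first row and strictly between the columns of its two anchors) is exactly right; and since $P$ contains the $1\times 2$ all-ones matrix, $\ex(n,P)\ge n$, so additive $O(n)$ losses are indeed harmless. But the upper bound itself is never proved, and your final paragraph concedes this. Two concrete failures. First, the sparsity claim ``above each structural copy of $P$ at most one one may occur between its anchors'' is false as stated: $P'$-avoidance only forbids \emph{two ones in a single common row} in that region, so each of up to $n$ rows above the copy may carry one such one, and the ``cap'' ones need not be sparse at all. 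The hoped-for bounded-rate injective charging of cap ones to structural copies therefore has no basis, especially since, as you note yourself, copies of $P$ overlap heavily. Second, the ``greedy top-to-bottom peeling'' that is supposed to extract a $P$-avoiding structural subset with controlled complement is unspecified and has no working analogue: peeling copies of $P$ gives no control over how many ones are removed, and the residue-class decomposition of Lemma \ref{addblanks} does not transfer, because the new row of $P'$ interacts with \emph{all} columns between the anchors rather than with fixed congruence classes. Deferring the decisive step to ``the careful block analysis of Keszegh'' leaves the lemma unproved.

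For comparison: the paper states Lemma \ref{addmidup} without proof, citing \cite{Keszegh2009}, but it does prove the $d$-dimensional generalization (Lemma \ref{extendpettie}, following the grafting lemma of \cite{Pettie2011}), whose case $d=2$ is this statement, and the key idea there is quite different from your structural/cap split. Let $A$ be an extremal $P'$-avoiding matrix. Partition the ones of each row of $A$ into consecutive blocks of $g=8$ ones (discarding at most $(g-1)n$ ungrouped ones), and form an auxiliary matrix in which each block occupies its own row. Call a one \emph{good} if it is the bottom-left entry of a copy of the fixed $2\times 4$ pattern $R$ with ones in positions $(1,2)$, $(1,3)$, $(2,1)$, $(2,4)$. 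The rows of the auxiliary matrix containing no good one span an $R$-free submatrix, and since $\ex(b,n,R)\le 7b+7n$ (a linear bound, cf. \cite{fulek2009}), the bad blocks carry only $O(n)$ ones. Crucially, the matrix $A_{\mathrm{good}}$ consisting of one representative per good block \emph{avoids $P$}: if it contained a copy, goodness of the representative playing the $(1,m)$-entry would supply a row strictly above with two ones lying strictly between two ones of that same block --- hence strictly between the two anchors, wherever the $(1,m+1)$-representative sits in a block further right --- producing $P'$ inside $A$. This yields $w(A)\le g\,\ex(n,P)+O(n)\le c\,\ex(n,P)$ outright. The block contraction is the mechanism your outline lacks: it converts the global condition ``some row has two ones between the anchors'' into local containment of the small pattern $R$ with linear extremal function, so no charging to (overlapping) copies of $P$, and no super-additivity, is ever needed.
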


\begin{lemma}[\cite{Keszegh2009}]\label{diagatt}
Suppose that $P$ is a 0-1 matrix with a one in the top right corner, and $Q$ is a 0-1 matrix with a one in the bottom left corner. Let $R$ be the matrix obtained from $P$ and $Q$ by placing them so that the top right corner of $P$ is in the same location as the bottom left corner of $Q$, and the entries outside of $P$ and $Q$ are all zeroes (see Figure \ref{fig:diagatt}). Then $max(\ex(n, P), \ex(n, Q)) \le \ex(n, R) \le \ex(n, P)+\ex(n, Q)$. 
\end{lemma}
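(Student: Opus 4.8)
The plan is to handle the two inequalities separately: the lower bound is immediate from monotonicity of $\ex$, while the upper bound comes from a two-coloring of the ones of an $R$-avoiding matrix. For the lower bound, observe that the block occupying the last rows and first columns of $R$ is exactly $P$ (the shared corner being the top-right one of $P$), so $R$ contains $P$; symmetrically $R$ contains $Q$. Using the fact that $\ex(n,M')\le\ex(n,M)$ whenever $M$ contains $M'$ — the same principle invoked for the first inequality of Lemma \ref{addblanks} — I get $\ex(n,P)\le\ex(n,R)$ and $\ex(n,Q)\le\ex(n,R)$, which is the claimed lower bound.

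For the upper bound, I would fix an $n\times n$ matrix $A$ that avoids $R$ and has $\ex(n,R)$ ones, and partition its ones into two classes that are bounded separately. Call a one of $A$ at position $(i,j)$ \emph{red} if $A$ contains a copy of $P$ whose top-right corner maps to $(i,j)$ — such a copy necessarily lies in the rows weakly below row $i$ and the columns weakly left of column $j$ — and \emph{blue} otherwise. Let $A_P$ and $A_Q$ be the matrices obtained from $A$ by keeping only the blue ones and only the red ones, respectively. The goal is to show $A_P$ avoids $P$ and $A_Q$ avoids $Q$, from which $w(A)=w(A_P)+w(A_Q)\le\ex(n,P)+\ex(n,Q)$ follows at once. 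That $A_P$ avoids $P$ is immediate: any copy of $P$ supported on blue ones is in particular a copy of $P$ in $A$, so its top-right corner would have been colored red, a contradiction.

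The crux is showing that $A_Q$ avoids $Q$. Suppose instead that the red ones contain a copy of $Q$, and let $(r,c)$ be the image of its bottom-left corner. Since $(r,c)$ is red, there is a copy of $P$ in $A$ with top-right corner at $(r,c)$; this copy uses rows $\ge r$ and columns $\le c$, whereas the copy of $Q$ uses rows $\le r$ and columns $\ge c$. Hence the two copies meet only in the cell $(r,c)$, and concatenating their (strictly increasing) row index sets and column index sets exhibits the glued pattern $R$ inside $A$: the $P$-part occupies the ``southwest'' block, the $Q$-part occupies the ``northeast'' block, and the two off-diagonal blocks of $R$ are all zero and so impose no constraint. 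This contradicts the assumption that $A$ avoids $R$, completing the upper bound.

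I expect the gluing step in the previous paragraph to be the only place requiring care: I must verify that the row index sets of the $P$-copy and the $Q$-copy are disjoint except for $r$ and interleave in the correct order (all $P$-rows weakly below, all $Q$-rows weakly above), and likewise for the columns about $c$, so that their union is a genuine order-preserving submatrix copy of $R$ rather than two overlapping or misaligned patterns. This is exactly where the diagonal-attachment geometry is used, and once the index sets are checked to be strictly increasing and to share only the corner, the argument closes.
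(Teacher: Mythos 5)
Your proof is correct, but note that there is no in-paper proof to compare it against: the paper states Lemma \ref{diagatt} as a quoted result from \cite{Keszegh2009} (``we mention some relevant results from the literature with citations, but without proof''). Your argument is, in substance, the standard one from the cited source. The lower bound via pattern containment and the monotonicity principle $\ex(n,M')\le\ex(n,M)$ for $M$ containing $M'$ (the same fact invoked in Lemma \ref{addblanks}) is exactly right. For the upper bound, your red/blue partition of the ones of an $R$-avoiding matrix $A$ --- red if the one is the top-right corner of a copy of $P$ in $A$, blue otherwise --- is the key device: the blue part avoids $P$ trivially, and the red part avoids $Q$ because a copy of $Q$ whose bottom-left corner sits at a red one $(r,c)$ can be glued to the witnessing $P$-copy at $(r,c)$. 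The interleaving check you flag does close as you expect: the $P$-copy's row indices form a strictly increasing sequence with smallest element $r$, the $Q$-copy's with largest element $r$, so their union (and likewise for columns about $c$) is strictly increasing and shares only the corner; since the two off-diagonal blocks of $R$ are identically zero, this union is a genuine copy of $R$ in $A$, contradicting $R$-avoidance. Thus $w(A)=w(A_P)+w(A_Q)\le \ex(n,P)+\ex(n,Q)$, as claimed, and your write-up would serve as a self-contained replacement for the citation.
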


The next theorem is a corollary of the preceding properties of 0-1 matrices and Lemma \ref{lem:01mat:ramsey}. This theorem produces many families of ordered graphs with Ramsey numbers that grow linearly with their size. We use the notation $\cG_{P}$ for the ordered graph with associated matrix $P$.

\begin{theorem} \label{thmoper}
Let $P$ be a fixed 0-1 matrix such that $\ex(n, P) = O(n)$. Suppose that we perform one of the operations below to yield a new family of 0-1 matrices $\left\{P_{j}\right\}$. Then the Ramsey numbers of the ordered graphs $\cG_{P_{j}}$ grow linearly with respect to their size.

\begin{enumerate}
\item \label{addrowcol} Assume that $P$ has a one in its first or last row or column. Let $P_j$ be obtained from $P$ by iterating the operation in Lemma \ref{addlast} a total of $j$ times, so that the sum of the number of rows and columns in $P_j$ is $j$ more than the sum of the number of rows and columns in $P$.

\item \label{addmidrowcol} Assume that $P$ has two adjacent ones in some row. Let $P_j$ be obtained from $P$ by applying the operation in Lemma \ref{addmid} with $t = j$, so that the number of columns in $P_j$ is $j$ more than the number of columns in $P$.

\item \label{graftrowcol} Assume that $P$ has two adjacent ones in the top row. Let $P'$ be obtained from $P$ by applying the operation in Lemma \ref{addmidup} to obtain a new 0-1 matrix with two adjacent ones in the top row, and then let $P_j$ be obtained from $P'$ by applying the operation in Lemma \ref{addmid} with $t = j$ to those adjacent ones in the first row, so that the number of columns in $P_j$ is $j+2$ more than the number of columns in $P$.

\item \label{diag2} Assume that $P$ has ones in the bottom right corner and the top left corner. Let $P_j$ be obtained from $P$ by applying the operation in Lemma \ref{diagatt} a total of $j$ times with $P = Q$.

\item \label{blankrowcol} For this last operation, there are no additional requirements on $P$. Let $P_j$ be obtained from $P$ by inserting $j$ empty rows between every adjacent pair of rows in $P$, and $j$ empty columns between every adjacent pair of columns in $P$.
\end{enumerate}
\end{theorem}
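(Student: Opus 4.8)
The plan is to combine Lemma~\ref{lem:01mat:ramsey} with the explicit coefficient bounds supplied by Lemmas~\ref{addblanks}--\ref{diagatt}. The key observation is a reduction: if a fixed $0$-$1$ matrix $Q$ satisfies $\ex(n,Q)\le C_Q\, n$ for all $n$ (which we may assume for any $Q$ with $\ex(n,Q)=O(n)$, since $\ex(n,Q)\le n^2$ always lets us enlarge the constant to absorb small $n$), then feeding this into Lemma~\ref{lem:01mat:ramsey} with $\cG=\cG_Q$ (which is interval $2$-chromatic by construction, so the lemma applies) and $P_{\cG_Q}=Q$ gives
\[(R(\cG_Q)-1)^2\le 8\,\ex\!\big((R(\cG_Q)-1)/2,\,Q\big)\le 4C_Q\,(R(\cG_Q)-1),\]
so that $R(\cG_Q)\le 4C_Q+1$. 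Thus a linear extremal-function bound for $Q$, with coefficient $C_Q$, yields a bound on $R(\cG_Q)$ that is linear in $C_Q$. It therefore suffices, for each operation, to show that $\ex(n,P_j)\le C_j\, n$ with $C_j$ growing at most linearly in $j$, and that the size $v(\cG_{P_j})$ (the number of rows plus the number of columns of $P_j$) grows linearly in $j$; the two facts together give $R(\cG_{P_j})=O(j)=O\big(v(\cG_{P_j})\big)$.

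Carrying this out is a short case analysis, one case per operation, each invoking the matching lemma. For operation~\ref{addrowcol}, iterating Lemma~\ref{addlast} a total of $j$ times gives $\ex(n,P_j)\le \ex(n,P)+jn\le (C+j)n$. For operation~\ref{addmidrowcol}, Lemma~\ref{addmid} with $t=j$ gives $\ex(n,P_j)\le (j+1)\ex(n,P)\le (j+1)Cn$. For operation~\ref{graftrowcol}, Lemma~\ref{addmidup} first gives $\ex(n,P')\le c\,\ex(n,P)\le cCn$, and then Lemma~\ref{addmid} with $t=j$ gives $\ex(n,P_j)\le (j+1)cCn$. For operation~\ref{diag2}, each of the $j$ diagonal attachments of $P=Q$ adds $\ex(n,P)$ by Lemma~\ref{diagatt}, so $\ex(n,P_j)\le (j+1)Cn$. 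In every case $C_j$ is linear in $j$, and a fixed number of rows and columns is added per step, so $v(\cG_{P_j})$ is linear in $j$, and the reduction above closes the argument.

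The remaining case~\ref{blankrowcol} is the only one needing a small extra step, so it is where I would be most careful. That operation inserts $j$ blank rows and columns only between adjacent pairs, whereas Lemma~\ref{addblanks} (with $k=j+1$) also pads the boundary; since the matrix produced by the theorem's operation is a submatrix of the one produced by the lemma, the monotonicity $\ex(n,M')\le \ex(n,M)$ whenever $M$ contains $M'$ lets us conclude $\ex(n,P_j)\le (j+1)\ex(n,P)+6(j+1)n\le (j+1)(C+6)n$, again linear in $j$, with the size once more linear in $j$. I do not expect a genuine obstacle here: the whole content is the reduction in the first paragraph, after which each case follows immediately from the corresponding extremal-function lemma. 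The only point to watch is to record the coefficient bounds for \emph{all} $n$ rather than merely asymptotically, so that the substitution $n=(R(\cG_{P_j})-1)/2$ in Lemma~\ref{lem:01mat:ramsey} is legitimate.
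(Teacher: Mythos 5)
Your proposal is correct and takes essentially the same route as the paper: the paper gives no separate proof of Theorem~\ref{thmoper}, stating it as a direct corollary of Lemmas~\ref{addblanks}--\ref{diagatt} combined with Lemma~\ref{lem:01mat:ramsey}, which is precisely your reduction. Your write-up simply makes explicit the details the paper leaves implicit --- the uniform coefficient bounds $\ex(n,P_j)\le C_j n$ with $C_j=O(j)$ valid for all $n$, the absorption of small $n$ into the constant, and the submatrix monotonicity needed in case~\ref{blankrowcol} because Lemma~\ref{addblanks} also pads the boundary --- and all of these are handled correctly.
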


\begin{figure}[H]
\centering
$
\centering
    \begin{bmatrix}
        0       & 1         & 1         & 0 \\
        1       & 0         & 0         & 1 \\
        0       & 0         & 1         & 0 \\
    \end{bmatrix}$
\caption{The matrix $F$}
\label{fig:fulekfig}
\end{figure}

Theorem \ref{thmoper} can be applied to any 0-1 matrix $P$ such that $\ex(n, P) = O(n)$ to generate a family of ordered graphs with linear Ramsey numbers. Many such matrices $P$ are known, but it is a major open problem to determine all such $P$ \cite{marcus2004}.

To see a typical application of the last theorem, define $F$ as the matrix in Figure \ref{fig:fulekfig}. Fulek \cite{fulek2009} proved that $\ex(n, F) = O(n)$ using known results on bar visibility graphs.

\begin{figure}[H]
\centering
$
\centering
    \begin{bmatrix}
        0       & 1         & 1         & 0         & 0 \\
        1       & 0         & 0         & 1         & 1 \\
        0       & 0         & 1         & 0         & 0 \\
    \end{bmatrix}
    \begin{bmatrix}
        0       & 1         & 1         & 0         & 0 \\
        1       & 0         & 0         & 1         & 1 \\
        0       & 0         & 1         & 0         & 0 \\
        0       & 0         & 1         & 0         & 0 \\
    \end{bmatrix}
    \begin{bmatrix}
        0       & 0         & 1         & 0         & 0 \\
        0       & 1         & 1         & 0         & 0 \\
        1       & 0         & 0         & 1         & 1 \\
        0       & 0         & 1         & 0         & 0 \\
        0       & 0         & 1         & 0         & 0 \\
    \end{bmatrix}$
\caption{The first few matrices of a family $\left\{F_n\right\}$ obtained by applying Theorem \ref{thmoper}.\ref{addrowcol} to $F$}
\label{addrowcolfig}
\end{figure}

If we apply the operation in Theorem \ref{thmoper}.\ref{addrowcol} to $F$, we can obtain a family $\left\{F_{n} \right\}$ with $F_{1}$, $F_{2}$, and $F_{3}$ pictured in Figure \ref{addrowcolfig}. Note that $F$ can generate multiple families $\left\{F_n \right\}$ since there are multiple ones in $F$ that are either in the first row, last row, first column, or last column. Regardless of which family we generate using Theorem \ref{thmoper}.\ref{addrowcol}, we always get $R(F_{n}) = \Theta(n)$.

\begin{figure}[H]
\centering
$
\centering
    \begin{bmatrix}
        0       & 1         & 1         & 1         & 0 \\
        1       & 0         & 0         & 0         & 1 \\
        0       & 0         & 0         & 1         & 0 \\
    \end{bmatrix}
    \begin{bmatrix}
        0       & 1         & 1         & 1         & 1         & 0 \\
        1       & 0         & 0         & 0         & 0         & 1 \\
        0       & 0         & 0         & 0         & 1         & 0 \\
    \end{bmatrix}
    \begin{bmatrix}
        0       & 1         & 1         & 1         & 1         & 1         & 0 \\
        1       & 0         & 0         & 0         & 0         & 0         & 1 \\
        0       & 0         & 0         & 0         & 0         & 1         & 0 \\
    \end{bmatrix}$
\caption{The first few matrices of the family $\left\{F_n\right\}$ obtained by applying Theorem \ref{thmoper}.\ref{addmidrowcol} to $F$}
\label{addmidrowcolfig}
\end{figure}

Alternatively, if we apply the operation in Theorem \ref{thmoper}.\ref{addmidrowcol} to $F$, we can obtain a family $\left\{F_{n} \right\}$ with $F_{1}$, $F_{2}$, and $F_{3}$ pictured in Figure \ref{addmidrowcolfig}. For this family, we also get $R(F_{n}) = \Theta(n)$. If we instead apply the operation in Theorem \ref{thmoper}.\ref{graftrowcol}, we get a similar looking family (Figure \ref{addgraftrowcolfig}) which also has $R(F_{n}) = \Theta(n)$.

\begin{figure}[H]
\centering
$
\centering
    \begin{bmatrix}
        0       & 0         & 1         & 1         & 1         & 0         & 0 \\         
        0       & 1         & 0         & 0         & 0         & 1         & 0 \\
        1       & 0         & 0         & 0         & 0         & 0         & 1 \\
        0       & 0         & 0         & 0         & 0         & 1         & 0 \\
    \end{bmatrix}
    \begin{bmatrix}
        0       & 0         & 1         & 1         & 1         & 1         & 0         & 0 \\  
        0       & 1         & 0         & 0         & 0         & 0         & 1         & 0 \\
        1       & 0         & 0         & 0         & 0         & 0         & 0         & 1 \\
        0       & 0         & 0         & 0         & 0         & 0         & 1         & 0 \\
    \end{bmatrix}
    \begin{bmatrix}
        0       & 0         & 1         & 1         & 1         & 1         & 1         & 0         & 0 \\ 
        0       & 1         & 0         & 0         & 0         & 0         & 0         & 1         & 0 \\
        1       & 0         & 0         & 0         & 0         & 0         & 0         & 0         & 1 \\
        0       & 0         & 0         & 0         & 0         & 0         & 0         & 1         & 0 \\
    \end{bmatrix}$
\caption{The first few matrices of the family $\left\{F_n\right\}$ obtained by applying Theorem \ref{thmoper}.\ref{graftrowcol} to $F$}
\label{addgraftrowcolfig}
\end{figure}

We save an application of Theorem \ref{thmoper}.\ref{diag2} for the subsection on layered permutations, but the first two elements of the family obtained by applying Theorem \ref{thmoper}.\ref{blankrowcol} are pictured in Figure \ref{blankrowcolfig}.

\begin{figure}[H]
\centering
$
\centering
    \begin{bmatrix}
        0       & 0       & 1       & 0         & 1     & 0         & 0 \\
        0       & 0       & 0       & 0         & 0     & 0         & 0 \\
        1       & 0       & 0       & 0         & 0     & 0         & 1 \\
        0       & 0       & 0       & 0         & 0     & 0         & 0 \\
        0       & 0       & 0       & 0         & 1     & 0         & 0 \\
    \end{bmatrix}
    \begin{bmatrix}
        0       & 0     & 0       & 1       & 0     & 0         & 1     & 0     & 0         & 0 \\
        0       & 0     & 0       & 0       & 0     & 0         & 0     & 0     & 0         & 0 \\
        0       & 0     & 0       & 0       & 0     & 0         & 0     & 0     & 0         & 0 \\
        1       & 0     & 0       & 0       & 0     & 0         & 0     & 0     & 0         & 1 \\
        0       & 0     & 0       & 0       & 0     & 0         & 0     & 0     & 0         & 0 \\
        0       & 0     & 0       & 0       & 0     & 0         & 0     & 0     & 0         & 0 \\
        0       & 0     & 0       & 0       & 0     & 0         & 1     & 0     & 0         & 0 \\
    \end{bmatrix}$
\caption{The first two matrices of the family $\left\{F_n\right\}$ obtained by applying Theorem \ref{thmoper}.\ref{blankrowcol} to $F$}
\label{blankrowcolfig}
\end{figure}

\subsection{A note on ordered matchings}

An unordered matching on $n$ vertices has Ramsey number linear in $n$ but, for example, the authors of \cite{Conlon2017} showed that this can grow to $n^{\Omega(\log n/\log \log n)}$ in the ordered equivalent. Even for an interval 2-chromatic matching $\cM$ we have $R(\cM)\ge n^{2-o(1)}$. In this section we give a special class of matchings whose Ramsey numbers are linear, and, for each $q\in (1,2)$, we construct an interval 2-chromatic matching on $n$ vertices whose Ramsey number is equal to $n^{q+o(1)}$.

\begin{definition}
The \emph{direct sum} $A\oplus B$ of two matrices $A$ and $B$ is the block matrix \[\left[
\begin{array}{c|c}
0 & B \\
\hline
A & 0
\end{array}
\right]\]
We call a permutation matrix $P$ \emph{sum-decomposable} if it is a direct sum of permutation matrices called the \emph{blocks} of $P$. 
\end{definition}

We use some results about extremal numbers of permutation matrices to give an upper bound for the Ramsey number of interval 2-chromatic matchings arising\footnote{Note that a matrix $P$ is a permutation matrix if and only if the ordered graph $\cG_P$ is an interval 2-chromatic matching.} from sum-decomposable permutations. There has been some extensive work on deriving bounds for the extremal number of permutation matrices. Most notably, Marcus and Tardos \cite{marcus2004} proved that $\ex(n, P) = O(n)$ for every permutation matrix $P$, and Fox \cite{Fox2013} later sharpened the bound by proving that $\ex(n, P) = 2^{O(k)} n$ for every $k$-by-$k$ permutation matrix $P$. 

\begin{proposition}\label{prop:layeredperm}
Let $P$ be an $km$-by-$km$ sum-decomposable permutation matrix where each block is $k$-by-$k$. Then the Ramsey number of $\cG_{P}$ is $2^{O(k)}m$.
\end{proposition}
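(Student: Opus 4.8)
The plan is to reduce the Ramsey bound to an extremal bound via Lemma \ref{lem:01mat:ramsey}, so the heart of the argument is to show that the sum-decomposable permutation matrix $P=Q_1\oplus\cdots\oplus Q_m$ (with each $Q_i$ a $k$-by-$k$ permutation matrix) satisfies $\ex(n,P)\le 2^{O(k)}m\,n$. The footnote guarantees that $\cG_P$ is interval $2$-chromatic, so Lemma \ref{lem:01mat:ramsey} applies; plugging in the extremal bound gives $(R(\cG_P)-1)^2\le 8\cdot 2^{O(k)}m\cdot(R(\cG_P)-1)/2 = 4\cdot 2^{O(k)}m\,(R(\cG_P)-1)$, and cancelling one factor of $R(\cG_P)-1$ yields $R(\cG_P)-1\le 4\cdot 2^{O(k)}m = 2^{O(k)}m$, as desired.

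To bound $\ex(n,P)$, I would exhibit a matrix $R$ that contains $P$, is assembled by diagonally attaching the blocks, and whose extremal function is controlled by Lemma \ref{diagatt} together with Fox's bound \cite{Fox2013}. The obstacle is that Lemma \ref{diagatt} requires each glued piece to carry a one in the relevant corner, whereas an arbitrary block $Q_i$ need not have corner ones; moreover the direct sum places blocks in pairwise disjoint rows and columns, while diagonal attachment forces consecutive pieces to share a single corner cell. Both issues are resolved by padding: for each $i$ let $\tilde Q_i$ be the $(k+2)$-by-$(k+2)$ permutation matrix obtained by placing $Q_i$ in the central $k$-by-$k$ block (rows and columns $2,\dots,k+1$) and putting ones in the bottom-left corner (row $k+2$, column $1$) and the top-right corner (row $1$, column $k+2$). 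This is a genuine permutation matrix, it contains $Q_i$, it has the required corner ones, and its central copy of $Q_i$ is disjoint from the padding rows and columns.

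Next I would set $R$ to be the iterated diagonal attachment of $\tilde Q_1,\tilde Q_2,\dots,\tilde Q_m$ via Lemma \ref{diagatt}, taken along the anti-diagonal direction that matches the convention of $\oplus$ (first block at the bottom-left, last block at the top-right). Since every $\tilde Q_i$ has ones in both corners, each attachment is legitimate and every partial result again retains corner ones, so the whole iteration is valid. Because the padding rows and columns absorb the corner-sharing, the central copies of $Q_1,\dots,Q_m$ sit in pairwise disjoint rows and columns, arranged exactly as in $P$; hence $R$ contains $P$ and $\ex(n,P)\le \ex(n,R)$. Iterating the upper bound in Lemma \ref{diagatt} gives $\ex(n,R)\le \sum_{i=1}^m \ex(n,\tilde Q_i)$, and since each $\tilde Q_i$ is a permutation matrix of size $k+2$, Fox's theorem \cite{Fox2013} gives $\ex(n,\tilde Q_i)=2^{O(k)}n$. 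Summing over the $m$ blocks yields $\ex(n,P)\le 2^{O(k)}m\,n$, which is precisely the input needed above.

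The main obstacle, as noted, is the mismatch between the corner-sharing of diagonal attachment and the row/column disjointness of the direct sum, compounded by the absence of corner ones in arbitrary blocks; the padding construction is the device that simultaneously fixes both while keeping each piece a permutation matrix, so that Fox's bound stays applicable and the factor incurred per block is $2^{O(k)}$ rather than something depending on $m$. A minor point I would verify explicitly is that restricting $R$ to the central rows and columns of all blocks introduces no stray ones: the only ones of $\tilde Q_i$ outside its central block lie in the padding rows or columns, which are discarded, so the restriction is exactly $P$ and the containment $R\supseteq P$ holds on the nose.
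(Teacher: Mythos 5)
Your proposal is correct and follows essentially the same route as the paper's proof: the paper also pads each block to the $(k+2)$-by-$(k+2)$ permutation matrix $I_1\oplus P_i\oplus I_1$, glues these via Lemma \ref{diagatt}, bounds each piece by Fox's theorem, and finishes with Lemma \ref{lem:01mat:ramsey}. The only (immaterial) difference is that you transfer the bound through matrix containment ($\ex(n,P)\le\ex(n,R)$) and apply Lemma \ref{lem:01mat:ramsey} to $\cG_P$ directly, while the paper applies it to $\cG_Q$ and then uses the graph containment $\cG_P\subseteq\cG_Q$.
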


\begin{prf}
Let $P=P_1\oplus \cdots \oplus P_m$ and for $1\le i \le m$ let $Q_i=I_1\oplus P_i\oplus I_1$ be the $(k+2) \times (k+2)$ permutation matrix obtained by concatenating a single one, the permutation matrix $P_i$, and another single one. Let $Q$ be the matrix obtained by gluing $Q_1, \ldots ,Q_m$ as in Lemma \ref{diagatt}. Then by Lemma \ref{diagatt} $\ex(n,Q)\le \sum_i \ex(n,Q_i)\le m 2^{O(k)}n$ and by Lemma \ref{lem:01mat:ramsey} $R(\cG_{Q})\le 2^{O(k)}m$. Finally, we note that $\cG_{Q}$ contains $\cG_{P}$, so the same upper bound holds for $R(\cG_P)$.
\end{prf}

In fact, the last result can be made more general. Define a $k \times jk$ matrix to be a $j$-tuple permutation matrix if it is obtained from a permutation matrix by replacing every column with $j$ adjacent copies of itself. In \cite{geneson2009}, it was proved that $\ex(n, R) = O(n)$ for every $j$-tuple permutation matrix $R$. Thus we have the corollary below.

\begin{corollary}
Suppose that $P$ is the maximal $j$-tuple permutation matrix contained in the matrix obtained from applying Lemma \ref{diagatt} $m$ times to any $k \times jk$ $j$-tuple permutation matrix with ones in opposite corners. Then the Ramsey number of $\cG_{P}$ is $O(m)$, where the coefficient of $m$ depends on $k$ and $j$. 
\end{corollary}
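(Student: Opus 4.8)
The plan is to mirror the proof of Proposition \ref{prop:layeredperm} almost verbatim, substituting the linear bound of \cite{geneson2009} for $j$-tuple permutation matrices in place of the Fox bound for permutation matrices. Write $R_0$ for the fixed $k \times jk$ $j$-tuple permutation matrix with ones in two opposite corners; I would choose the orientation so that $R_0$ has a one in its bottom-left corner and a one in its top-right corner, matching the corner hypotheses of Lemma \ref{diagatt}. Let $Q$ denote the ``staircase'' matrix obtained by applying the operation of Lemma \ref{diagatt} a total of $m$ times with $P = Q = R_0$; that is, $Q$ is built by diagonally gluing $m+1$ copies of $R_0$, attaching the top-right corner of each copy to the bottom-left corner of the next.

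First I would check that this iteration is legitimate. Since each copy of $R_0$ carries a one in both relevant corners, after each gluing the resulting staircase still has a one in its overall bottom-left corner (inherited from the first copy) and in its overall top-right corner (inherited from the last copy). Hence Lemma \ref{diagatt} applies at every step, and iterating its bound gives $\ex(n, Q) \le (m+1)\,\ex(n, R_0)$. By the result of \cite{geneson2009}, $\ex(n, R_0) = O(n)$ with an implied constant depending only on $k$ and $j$, so $\ex(n, Q) = O(mn)$.

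Next, let $P$ be the maximal $j$-tuple permutation matrix contained in $Q$, as in the statement. Since $Q$ contains $P$, the monotonicity fact that $\ex(n, M') \le \ex(n, M)$ whenever $M$ contains $M'$ (used already in the proof of Lemma \ref{addblanks}) yields $\ex(n, P) \le \ex(n, Q) = O(mn)$. Finally I would apply Lemma \ref{lem:01mat:ramsey} to the interval $2$-chromatic graph $\cG_P$, whose associated matrix is exactly $P$: writing $N = R(\cG_P) - 1$, we obtain $N^2 \le 8\,\ex(N/2, P) = O(mN)$, and dividing through by $N$ gives $R(\cG_P) = O(m)$ with a coefficient depending on $k$ and $j$.

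The routine parts are the iteration of Lemma \ref{diagatt} and the final arithmetic. The main thing to get right is the bookkeeping around the gluing: one must ensure the base matrix's ones sit in the corners that Lemma \ref{diagatt} requires (top-right and bottom-left) so that the gluing can be repeated, and one must confirm that passing to the maximal $j$-tuple permutation submatrix $P \subseteq Q$ only deletes ones, so that the extremal bound for $Q$ transfers to $P$ in the correct direction. Neither step is genuinely hard; the substance of the result is carried entirely by the bound $\ex(n, R_0) = O(n)$ for $j$-tuple permutation matrices, exactly as the Fox bound carried Proposition \ref{prop:layeredperm}.
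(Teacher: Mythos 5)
Your proposal is correct and follows essentially the argument the paper intends: the corollary is stated as an immediate analogue of Proposition \ref{prop:layeredperm}, obtained by iterating Lemma \ref{diagatt}, substituting the bound $\ex(n, R_0) = O(n)$ for $j$-tuple permutation matrices from \cite{geneson2009} in place of Fox's bound, and finishing with Lemma \ref{lem:01mat:ramsey}. The only (harmless) cosmetic difference is that you transfer the extremal bound from $Q$ to $P$ via matrix containment before invoking Lemma \ref{lem:01mat:ramsey}, whereas the paper's template applies Lemma \ref{lem:01mat:ramsey} to $\cG_Q$ and then uses the graph containment $\cG_Q \supseteq \cG_P$; both directions are valid and yield $R(\cG_P) = O(m)$.
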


In \cite{Conlon2017} it was shown that $R(\cM)\le n^2$ for every interval 2-chromatic ordered matching, and we know that the set of $n$ nested edges (or a sum-decomposable permutation) has Ramsey number of the order $\Theta(n)$. It is natural to ask whether a matching with Ramsey number $\Theta(n^q)$ exists for each exponent $q\in (1,2)$. Indeed, one may adapt the construction used in \cite{Conlon2017} to construct a matching $\cM$ with $R(\cM)=n^{2+o(1)}$ to prove the following.

\begin{theorem}\label{thm:matching-construction}
For each $q\in (1,2)$ there exists an ordered matching $\cM$ on $n$ vertices whose Ramsey number is $n^{q+o(1)}$.
\end{theorem}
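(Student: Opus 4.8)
The plan is to obtain $\cM$ by diluting a maximally hard matching with isolated vertices, tuning the amount of dilution to hit the target exponent, and then reading off the Ramsey number from the spread bounds already established in this paper (the Observation after Proposition~\ref{addvertexub}, together with Proposition~\ref{addvertexub} itself). Concretely, let $\cM^*$ be the interval 2-chromatic matching on $m$ vertices coming from the \cite{Conlon2017} construction, so that $R(\cM^*)=m^{2+o(1)}$. For a dilution parameter $k$, let $\cM$ be the $(k,\dots,k)$-spread of $\cM^*$, i.e.\ insert $k$ isolated vertices into every gap of $\cM^*$. This is still a matching, since isolated vertices do not raise the maximum degree, and it has $m+k(m-1)$ vertices.

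The Ramsey number of $\cM$ is then pinned down from both sides. Proposition~\ref{addvertexub}, applied with every $k_i=k$ and with head and tail both equal to $1$, gives $R(\cM)\le (k+1)R(\cM^*)-k$, while the Observation gives $R(\cM)\ge (k-1)(R(\cM^*)-1)+1$. The ratio of these two bounds tends to $1$ as $k\to\infty$, so
\[
R(\cM)=\Theta\!\left(k\cdot m^{2+o(1)}\right),
\]
and once $m\to\infty$ and $k\to\infty$ the bounded constant is absorbed into the exponent.

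It remains to choose the parameters as functions of $n$. Take $m=\lceil n^{q-1}\rceil$ and $k=\lfloor (n-m)/(m-1)\rfloor=n^{2-q+o(1)}$, so the spread has at most $n$ vertices; pad with the at most $m-1$ remaining trailing isolated vertices, which by Proposition~\ref{vertex on end} changes the Ramsey number only additively. Since $q\in(1,2)$ we have both $m\to\infty$ and $k\to\infty$, so the $o(1)$ terms are genuine. Passing to logarithms,
\[
\log R(\cM)=\log k+(2+o(1))\log m+O(1)=(2-q)\log n+2(q-1)\log n+o(\log n)=q\log n+o(\log n),
\]
which gives $R(\cM)=n^{q+o(1)}$, as required.

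The main point that needs care is not the arithmetic but the availability of the building block at the right scale: I need the \cite{Conlon2017} construction to supply a matching $\cM^*$ with $R(\cM^*)=m^{2+o(1)}$ for a set of sizes $m$ dense enough to realize $m\approx n^{q-1}$ for all large $n$, with the $o(1)$ uniform in $m$. Because that construction is available along a dense sequence of sizes and the spread lets us pad up to the exact value of $n$, this is a technicality rather than a genuine obstacle; but it is the step that must be checked, since the upper and lower bounds have to remain $n^{q\pm o(1)}$ simultaneously. If one prefers a self-contained argument, the same interpolation can be run inside the \cite{Conlon2017} construction directly, setting its stretch parameter so as to produce exponent $q$ instead of $2$; the spread viewpoint above is simply the cleanest way to reuse the machinery developed here.
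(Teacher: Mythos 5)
There is a genuine gap, and it is not the technicality you flagged at the end. Your diluted graph is a matching only in the weak sense of having maximum degree at most one: of its $n$ vertices, only about $n^{q-1}$ are covered by edges, and the remaining $n-n^{q-1+o(1)}$ are isolated. In this paper ``the matching on $n$ vertices'' means the perfect matching $M_n$ with $n/2$ edges (see the discussion around Theorem \ref{thm:randommatching}, and the footnote in the matchings subsection identifying interval 2-chromatic matchings with permutation matrices, which have no empty rows or columns); accordingly, the paper's own construction for Theorem \ref{thm:matching-construction} produces a \emph{balanced} interval 2-chromatic matching on $2n$ vertices in which every vertex is matched. A telltale sign that isolated vertices must be excluded is that, if they were allowed, your machinery would be overkill: one could simply take the \cite{Conlon2017} matching $\cM^*$ on $m=\lceil n^{q/2}\rceil$ vertices with $R(\cM^*)=m^{2\pm o(1)}$ and append $n-m$ trailing isolated vertices, and Proposition \ref{vertex on end} alone already gives $R=n^{q+o(1)}+O(n)=n^{q+o(1)}$, with no need for the spread bounds at all. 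So under the reading that makes your proof work the theorem is essentially trivial, and under the intended reading your construction does not produce the right object.

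The paper instead interpolates inside the structure of the matching itself: it takes the matching $\cM_\pi$ of the van der Corput permutation on $[n^{q-1}]$ and blows up each \emph{edge} into $n^{2-q}$ nested edges, so every vertex stays matched. Neither bound then follows from the spread lemmas. The upper bound $R(\cM)\le 4n^{q}$ is proved by a two-level pigeonhole argument (find $n^{2-q}$ monochromatic nested edges between each pair of blocks, then find a monochromatic $\cM_\pi$ in the resulting auxiliary coloring), and the lower bound $n^{q-o(1)}$ requires a probabilistic construction --- a random 2-coloring of $\cK_t$ with $t\approx n^{q-1}/\log^2 n$, blown up by $s\approx n/\log n$ --- whose analysis leans on transferring the discrepancy property \eqref{eq:discrepancy} of the van der Corput permutation to the blown-up matching and then counting partitions and red edges. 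Your closing remark that ``the same interpolation can be run inside the \cite{Conlon2017} construction directly'' is indeed the paper's route, but it is the entire content of the proof rather than an optional variant: the nested-edge blow-up and the accompanying discrepancy and counting estimates are precisely what the dilution argument sidesteps and what the intended statement requires.
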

\begin{prf}
Let $\pi$ denote the van der Corput permutation on $[n]$, for which it holds that for every pair of intervals $I,J\subseteq [n]$
\begin{align}\label{eq:discrepancy}\left||\pi(I)\cap J|-\frac{|I||J|}{n}\right|\le C\log n\end{align}
for some absolute constant $C$ (see \cite{matousek2009geometric}).

Let $\cM_\pi$ be the ordered matching on $2n^{q-1}$ vertices corresponding to the van der Corput permutation on $[n^{q-1}]$, and let $\cM$ denote the matching obtained by blowing up each edge of $\cM_\pi$ to a matching consisting of $n^{2-q}$ nested edges.\footnote{The term \emph{$m$ nested edges} refers to the ordered graph on $[2m]$ with edge set $\{\{j,n-j+1\}\}_{j=1}^m$} Then $\cM$ is a balanced interval 2-chromatic matching on $2n$ vertices.

We first show that $R(\cM)\le 4n^{q}$.\\
First, we claim that any coloring of the complete graph on $[2n^{2q-2}]$ contains a monochromatic copy of $\cM_\pi$. Indeed, let us split the vertices of $\cK_{2n^{2q-2}}$ into $2n^{q-1}$ intervals of $n^{q-1}$ consecutive vertices. If any two of these intervals induce a monochromatic $K_{n^{q-1},n^{q-1}}$, then this trivially gives a copy of $\cM_\pi$. So between every two of these intervals there is an edge in every color, again giving in fact one copy of $\cM_\pi$ in each color.\\
Now, consider an arbitrary coloring of $\cK_{4n^q}$. Split the vertex set into $2n^{2q-2}$ blocks, each consisting of $2n^{2-q}$ vertices. Between any pair of small intervals, by pigeonhole principle, we may find $n^{2-q}$ nested edges in the same color. Now consider the auxiliary colored complete graph on $2n^{2q-2}$ vertices whose edges are colored according to which color these $n^{2-q}$ nested edges were between the corresponding blocks. By the previous paragraph this contains a monochromatic copy of $\cM_\pi$, which blown back up with the set of nested edges gives a monochromatic copy of $\cM$.

Now, it remains to show that $R(\cM)\ge cn^{q-o(1)}$. Together with the previous paragraph, this implies the result.\\
First, note that a bound similar to \eqref{eq:discrepancy} holds for $\cM$ as well.\\
Let us call each set of $n^{2-q}$ vertices corresponding to a single vertex in $\cM_\pi$ a \emph{block}, and let $I,J\subseteq [n]$ be two intervals. Then $I$ and $J$ each cover at least $\lfloor |I|/n^{2-q}\rfloor-1$ and $\lfloor |J|/n^{2-q}\rfloor-1$ blocks respectively. Then by the properties of $\pi$, we have
\[e(I,n+J)=\left(\frac{(\lfloor|I|/n^{2-q}\rfloor-1)(\lfloor |J|/n^{2-q}\rfloor-1)}{n^{q-1}}\pm C\log n^{q-1}\right)\times n^{2-q}\pm (2n^{2-q}-2)\]
giving (crudely)
\[e(I,n+J)=\frac{|I||J|}{n}\pm \left(2\frac{|I|}{n^{q-1}}+2\frac{|J|}{n^{q-1}}+n^{3-2q}+Cn^{2-q}\log(n^{q-1})+2n^{2-q}+2\right).\]
The trivial bounds $|I|,|J|\le n$ and $6n^{2-q}+n^{3-2q}+2\le (2-q)n^{2-q}\log n$ give 
\[\left|e(I,n+J)-\frac{|I||J|}{n}\right|\le Cn^{2-q}\log n.\]

Now a change of parameters in the construction in \cite{Conlon2017} gives the correct lower bound on the Ramsey number of $\cM$. We include a sketch of the argument with the relevant calculations below.

Let $t=8cn^{q-1}/(\log n)^2 $ and $s=n/8\log n$, where $c\ll C^{-1}$ is a small constant. Consider the complete ordered graph $\cK_t$ with loops, and color at random in red/blue with equal probability. Blow up each vertex to $s$ vertices, blowing up the coloring as well. Our aim is to show that $\PP(\mbox{red copy of }\cM)<1/2$.

Denote the monochromatic intervals by $I_1,...,I_t$, and suppose there is a red copy of $\cM$. Note that only one of these intervals may contain vertices both from the left and right of $\cM$, and all intervals with smaller indices must contain only vertices from the left side of $\cM$ and all intervals with larger indices must contain vertices from the right side of $\cM$. So, we can denote the intersections of the vertex set of $\cM$ with each interval $I_j$ by $A_1,...,A_k,B_k,...,B_t$, where the $A_i$ contain only vertices from the left side of $\cM$ and the $B_i$ contain only vertices from the right side of $\cM$. Note, the sets $A_i, B_i$ are not necessarily intervals. Moreover, the sets $A_i$ and $B_i$ form a partition of $[2n]$ into sets, and the number of such partitions is $\binom{2n+t}{t}$. Note that
\begin{align*}
\binom{2n+t}{t}&\le \left(e\left(\frac{2n}{t}+1\right)\right)^t\le \exp \left\{\frac{8cn^{q-1}}{\log^2 n}\log (c^{-1}n^{2-q}\log^2 n)\right\}
\le e^{8c n^{q-1}/\log n}.
\end{align*}
Fix one such partition.

Let $A$ be the $(d+1)$-st largest set among the $A_i$'s and let $B$ be the $(d+1)$-st largest set among the $B_i$'s, where $d=2\log n$. Then:
\begin{itemize}
    \item If $|A||B|>Cn^{3-q}\log n$, between any $A_i$ larger than $A$ and each $B_i$ larger than $B$ we have strictly more than $Cn^{3-q}\log n/n-Cn^{2-q}\log n=0$ edges of $\cM$. This means that at least $d^2$ edges of the original $K_t$ which we colored randomly have been colored in red. This happens with probability at most $\binom{t}{d}^22^{-d^2}$. We have that
    \begin{align*}
        \binom{t}{d}^22^{-d^2}&\le t^{2d}e^{-d^2} 
        \le e^{4(q-1)\log^2 n-4\log^2 n }<\frac{1}{4}.
    \end{align*}
    \item If $|A||B|\le Cn^{3-q}\log n$, we use the fact that each $A_i$ larger than $A$ contains an endpoint of at most $s$ edges of $\cM$, and each $B_i$ larger than $B$ contains an endpoint of at most $s$ edges of $\cM$. This gives a total of at most $2ds$ edges of $\cM$ coming out of some $A_i$ or $B_i$ larger than $A$ or $B$, leaving at least $n-2ds=n/2$ edges of $\cM$ between small $A$'s and $B$'s. Each such pair $A_i$, $B_j$ has at most $|A_i||B_j|/n+Cn^{2-q}\log n\le 2Cn^{2-q}\log n$ edges of $\cM$ between them, so at least $n^{q-1}/(4C\log n)$ pairs of small sets $(A_i,B_j)$ have at least one edge of $\cM$ between them. This happens with probability \begin{align*}
    \binom{2n+t}{t}2^{-n^{q-1}/(4C\log n)}&\le e^{8cn^{q-1}/\log n-n^{q-1}/(4C\log n)}<\frac1{4}.
    \end{align*}
    
\end{itemize}
\end{prf}

\subsection{The alternating cycle}

In this section we derive an upper bound of the order $k^2$ for the Ramsey number of the alternating ordering of the cycle on $2k$ vertices, as defined below. Note that in \cite{balko2015} it was shown that the monotone ordering of the cycle on $n$ vertices has Ramsey number $2n^2-6n+6$.

\begin{definition}
The \emph{alternating cycle} $\cC_{2k}^{\text{alt}}$ is a interval 2-chromatic ordering of $C_{2k}$ such that $I_1=\{a_1,...,a_k\}$, $I_2=\{b_k,...,b_1\}$, and $E(C_{2k}^{\text{alt}})=\{a_ib_j:|i-j|= 1\}\cup\{a_1b_1,a_kb_k\}$.
\end{definition}

\begin{figure}[H]
	\centering
    \begin{tikzpicture}[yscale = 0.8, scale=.75]
        \foreach \n in {1,...,8} {
			\coordinate (\n) at (\n,0);
            \node (\n) at (\n,0) {};
		}
		\arc{1}{8}
        \arc{1}{7}
        \arc{2}{8}
        \arc{2}{6}
        \arc{3}{7}
        \arc{3}{5}
        \arc{4}{6}
        \arc{4}{5}
;/.		\end{tikzpicture}
\label{fig:altcycle}
\caption{The alternating cycle $\cC_8^{alt}$.}
\end{figure}

\begin{theorem}
$R(\cC_{2k}^{\text{alt}})= O(k^2)$.
\end{theorem}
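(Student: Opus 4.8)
The plan is to feed the associated matrix of $\cC_{2k}^{\text{alt}}$ into Lemma~\ref{lem:01mat:ramsey}, so the real work is to pin down its extremal function. First I would write out $P:=P_{\cC_{2k}^{\text{alt}}}$ explicitly: taking the left interval $\{a_1,\dots,a_k\}$ as rows and the right interval $\{b_k,\dots,b_1\}$ as columns, the edges $a_ib_{i\pm1}$ place the ones of $P$ on two parallel anti-diagonals that are two apart, while the corner edges $a_1b_1,a_kb_k$ add a single one at each end on the middle anti-diagonal. Reflecting the columns (which leaves $\ex(n,\cdot)$ unchanged) turns this into the $k\times k$ band with ones exactly on the sub- and super-diagonal together with the two corners $(1,1)$ and $(k,k)$; equivalently $P$ is the interleaving of the two perfect matchings of $C_{2k}$, a $0$-$1$ matrix with precisely two ones in each row and column. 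Note that for $k=2$ this band degenerates to the all-ones matrix $J_2$ (whose extremal function is $\Theta(n^{3/2})$), but for every $k\ge 3$ the matrix $P$ is $J_2$-free, which is what leaves room for an extremal bound that is near-linear in $n$.

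Granting the extremal estimate $\ex(n,P)=O(k^2 n)$, the theorem is immediate: Lemma~\ref{lem:01mat:ramsey} gives $(R(\cC_{2k}^{\text{alt}})-1)^2\le 8\,\ex\!\big((R-1)/2,\,P\big)=O(k^2)\cdot (R-1)$, and dividing by $R-1$ yields $R(\cC_{2k}^{\text{alt}})=O(k^2)$. So the whole statement reduces to proving \[\ex(n,P)=O(k^2 n).\]

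To prove this I would study a $P$-avoiding $n\times n$ matrix $A$ through the structure that a copy of $P$ forces. Such a copy is a choice of rows $r_1<\dots<r_k$ together with a \emph{single} common column set $c_1<\dots<c_k$ having ones of $A$ at $(r_i,c_{i-1})$ and $(r_i,c_{i+1})$ for each $i$; thus each column $c_j$ is shared by the two rows $r_{j-1}$ and $r_{j+1}$, and the occupied window $\{c_{i-1},c_{i+1}\}$ slides one step to the right as the row index advances (the corners just fold the window in at the two ends). The plan is to attach to every one of $A$ the length of the longest partial weave of this kind terminating at it, to observe that this statistic never reaches $k$ (otherwise $A$ would contain $P$), and to bound the number of ones realizing each fixed value; I expect each level to carry $O(kn)$ ones, summing to $O(k^2 n)$ over the at most $k$ levels.

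The main obstacle is exactly this per-level count, and it is also where the quadratic dependence on $k$ genuinely originates. For a single increasing diagonal (a permutation pattern) the analogous levels are free of any increasing pair, so each carries only $O(n)$ ones---this is the reason the alternating \emph{path} has linear Ramsey number. The two-ones-per-row band of the cycle spoils this: the weave must reuse each column between the two rows on either side of it, so a level is constrained only to avoid the width-two staggered configuration rather than a plain increasing pair, and the sharpest per-level bound I anticipate is $O(kn)$. Controlling this column-sharing cleanly is the step I expect to require the most care. As an independent check on the exponent I would also try to rebuild $P$ from the operations developed earlier in this section: the band cuts along its length into $O(k)$ short sub-bands whose extremal functions add by Lemma~\ref{diagatt}, while the width-two interlocking of the two diagonals is absorbed by Lemma~\ref{addmid} at the cost of one further factor of $k$, again predicting $\ex(n,P)=O(k^2 n)$ and hence $R(\cC_{2k}^{\text{alt}})=O(k^2)$.
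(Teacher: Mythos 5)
There is a genuine gap here: your central claim $\ex(n,P)=O(k^2n)$ is false, and in fact no bound linear in $n$ can hold for this pattern. The underlying bipartite graph of $P=P_{\cC_{2k}^{\text{alt}}}$ is the cycle $C_{2k}$ itself, and any matrix-pattern copy of $P$ inside a matrix $A$ yields in particular a $C_{2k}$-subgraph of the bipartite graph of $A$. Hence any bipartite graph of girth greater than $2k$ gives a $P$-avoiding 0-1 matrix, and such graphs with $n^{1+\Omega(1/k)}$ edges exist classically (for $k=3$, $C_6$-free incidence graphs already have $\Theta(n^{4/3})$ edges). So $\ex(n,P)=n^{1+\Omega(1/k)}$, which for fixed $k$ and large $n$ exceeds any $Ck^2n$; consequently the anticipated per-level count of $O(kn)$ ones in your ``weave'' argument cannot be achieved, and $J_2$-freeness for $k\ge 3$ does not leave room for near-linearity. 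Your own observation that the $k=2$ case degenerates to $J_2$ with $\ex(n,J_2)=\Theta(n^{3/2})$ should have been read as a warning rather than an exception: the $n^{3/2}$ ingredient is unavoidable, because a long cycle pattern has superlinear extremal function even though it contains no short cycle.

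The paper's proof keeps the exponent $3/2$ and instead makes the dependence on $k$ linear: it proves $\ex(n,P_{2k})\le(2k-2)\ex(n,P_4)$ by induction, using the observation that gluing $P_{2k}$ and $P_{2l}$ corner-to-corner as in Lemma \ref{diagatt} produces a matrix \emph{containing} $P_{2(k+l)-2}$ --- crucially, the glued pieces are themselves alternating-cycle matrices, whose corner ones are what close the larger cycle. With $\ex(n,P_4)=O(n^{3/2})$ from the K\'{o}vari--S\'{o}s--Tur\'{a}n theorem \cite{Kovari1954}, Lemma \ref{lem:01mat:ramsey} then gives $\left(R(\cC_{2k}^{\text{alt}})-1\right)^2\le 8(2k-2)\,\ex\left((R-1)/2,P_4\right)=O(k)\cdot(R-1)^{3/2}$, hence $R=O(k^2)$: the quadratic dependence on $k$ arises from the exponent $3/2$ combined with a coefficient linear in $k$, not from a $k^2$ coefficient on a linear bound. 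This also explains why your fallback ``independent check'' fails: cutting the band into acyclic sub-bands and adding extremal functions via Lemma \ref{diagatt} only rebuilds a longer \emph{path} band, never the cycle pattern (the closing edges $a_1b_1$ and $a_kb_k$ cannot arise from gluing paths), and had such a decomposition worked it would directly contradict the girth lower bound above.
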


\begin{prf}
For ease of notation, let $\cC_{2k}$ denote the alternating cycle $\cC_{2k}^{\text{alt}}$, and let $P_{2k}$ be its associated matrix. We will show that for all $k\ge 2$ we have $\ex(n,P_{2k})\le (2k-2) \ex(n,P_4)$. Then by Lemma~\ref{lem:01mat:ramsey} we have $\left(R(\cC_{2k})-1\right)^2\le 8(2k-2) \ex((R(C_{2k})-1)/2,P_4)$. Note that $P_4=\left(\begin{smallmatrix}1&1\\1&1\end{smallmatrix}\right)$, so by a well-known result on Zarankiewicz's problem due to K\'{o}vari, S\'{o}s and Tur\'{a}n \cite{Kovari1954}, $\ex(n,P_4)=O(n^{3/2})$, giving the result $R(\cC_{2k}^{\text{alt}})=O(k^2)$.\\
It remains to show that $\ex(n,P_{2k})\le (2k-2)\cdot \ex(n,P_4)$. We do this by induction on $k$. The case $k=2$ is trivial. When $k>2$, note that by performing a matrix operation on $P_{2k}$ and $P_{2l}$, we obtain a matrix containing $P_{2(k+l)-2}$. In particular, $\ex(n,P_{2(k+l)-2})\le \ex(n,P_{2k})+\ex(n,P_{2l})$. Then if $k$ is odd, we have $\ex(n,P_{2k})\le 2\ex(n,P_{k+1})\le 2(k-1) \ex(n,P_4)$, and similarly if $k$ is even, we have $\ex(n,P_{2k})\le \ex(n,P_k)+\ex(n,P_{k+2})\le (k-2)\ex(n,P_4)+k\cdot \ex(n,P_4)=(2k-2)\ex(n,P_4)$.
\end{prf}

\begin{remark}
Using the method of flag algebras as described in \cite{lidicky2017+}, we can obtain $R(\cC_{6}^{\text{alt}}) \le 30$.
\end{remark}

\subsection{Extension to hypergraphs}

In the last section, we provided several operations that can be performed on 0-1 matrices to produce interval $2$-chromatic ordered graphs with linear Ramsey numbers. In this section, we generalize those operations to $d$-dimensional 0-1 matrices to produce a class of ordered hypergraphs with linear Ramsey numbers.

Let us define $\ex(n, P, d)$ to be the maximum number of ones in a $P$-free $d$-dimensional 0-1 matrix of sidelength $n$. The proofs for the following operations are analogous to the $2$-dimensional cases proved in \cite{Keszegh2009}, \cite{tardos2005}, and this paper.

\begin{lemma} \label{extend} Let $d\ge 2$ be an integer and let $P$ be a fixed $d$-dimensional matrix.
\begin{enumerate}[(a)]
\item  \label{extend-addlast} If $P'$ is obtained from $P$ by adding a new first $(d-1)$-dimensional hyperplane of entries to $P$ with a single one next to a one of $P$ (see Figure \ref{fig:extend-addlast}), then $\ex(n, P, d) \le \ex(n, P', d) \le \ex(n, P, d)+n^{d-1}$.
\item  \label{extend-addmid} If $P'$ is obtained from $P$ by adding $t$ extra $(d-1)$-dimensional hyperplanes of entries to $P$ between two adjacent $(d-1)$-dimensional hyperplanes of entries of $P$, such that each new $(d-1)$-dimensional hyperplane has a single one in the same $1$-row\footnote{An $i$-row is a maximal set of entries with all coordinates the same except for the $i^{th}$ coordinate.}, all of the new ones have $d-1$ same coordinates, and the newly introduced ones have a one from $P$ next to them on both sides (see Figure \ref{fig:extend-addmid}), then $\ex(n, P, d) \le \ex(n, P', d) \le (t+1)\ex(n, P, d)$.
\item  \label{extend-diagatt} Suppose that $P$ is a $d$-dimensional 0-1 matrix with a one in a corner, and $Q$ is a $d$-dimensional 0-1 matrix with a one in the opposite corner. Let $R$ be the pattern obtained from $P$ and $Q$ by placing them so that the corner ones are in the same location, and the entries outside of $P$ and $Q$ are all zeroes (see Figure \ref{fig:extend-diagatt}). Then $max(\ex(n, P, d), \ex(n, Q, d)) \le \ex(n, R, d) \le \ex(n, P, d)+\ex(n, Q, d)$.
\item  \label{extend-addblanks} Suppose that $P'$ is obtained from $P$ by inserting $k-1$ empty $(d-1)$-dimensional hyperplanes of entries between every adjacent pair of $(d-1)$-dimensional hyperplanes of entries in $P$, as well as before and after the first and last $(d-1)$-dimensional hyperplanes (see Figure \ref{fig:extend-addblanks}). Then $\ex(n, P, d) \le \ex(n, P', d) \le k \ex(n, P, d) + 3d k n$.
\end{enumerate}
\end{lemma}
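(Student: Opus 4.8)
The plan is to treat all four parts through a common dictionary with the planar proofs: a ``row'' or ``column'' becomes a $(d-1)$-dimensional hyperplane, a ``1-row'' becomes a line parallel to the insertion axis, and a ``quadrant'' becomes an orthant of the $d$-cube. First I would dispose of the four lower bounds simultaneously: in each case the smaller pattern ($P$, and also $Q$ in part~\ref{extend-diagatt}) is a submatrix of the larger one, so any matrix avoiding the smaller pattern also avoids the larger, whence $\ex(n,P,d)\le\ex(n,P',d)$ (and $\ex(n,R,d)\ge\max(\ex(n,P,d),\ex(n,Q,d))$ for part~\ref{extend-diagatt}). The substance is in the four upper bounds, which I would obtain by adapting Lemmas~\ref{addlast}, \ref{addmid}, \ref{diagatt} and \ref{addblanks} respectively.

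For part~\ref{extend-addlast} I would take a maximum-weight $P'$-free matrix $A$ of sidelength $n$ and delete, from each line parallel to the axis along which the new hyperplane was prepended, its extreme (smallest-coordinate) one; since such lines are indexed by the remaining $d-1$ coordinates there are at most $n^{d-1}$ of them, so at most $n^{d-1}$ ones are removed. The remaining matrix $B$ avoids $P$: a copy of $P$ in $B$ would have, in the distinguished line, an entry that is not extreme in $A$, so an earlier $A$-one lies beyond the whole copy and can serve as the prepended one, producing $P'$ in $A$. This gives $\ex(n,P',d)\le\ex(n,P,d)+n^{d-1}$. For part~\ref{extend-diagatt} I would take a maximum-weight $R$-free $A$ and call a one $x$ \emph{$P$-heavy} if the orthant at $x$ in which $P$ would sit already contains a copy of $P$ with its distinguished corner at $x$. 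Splitting the ones of $A$ into the $P$-heavy ones ($A_1$) and the rest ($A_2$), the matrix $A_2$ avoids $P$ (the distinguished corner of any $P$-copy in $A_2$ would itself be $P$-heavy) and $A_1$ avoids $Q$ (the distinguished corner $x$ of a $Q$-copy in $A_1$ is $P$-heavy, and the $P$-copy at $x$ together with the $Q$-copy, living in opposite orthants at $x$, assemble into $R$ in $A$). Hence $w(A)=w(A_1)+w(A_2)\le\ex(n,P,d)+\ex(n,Q,d)$.

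Part~\ref{extend-addmid} is the most delicate, and I would handle it with a modular colouring. Let $A$ be maximum-weight $P'$-free, let $L$ be the line carrying the $t$ inserted ones, and list the ones of $A$ on $L$ in order; colour each by its index modulo $t+1$, and for $\ell\in\{0,\dots,t\}$ let $A_\ell$ consist of all ones off $L$ together with the colour-$\ell$ ones on $L$. Every one lies in at least one $A_\ell$, so $\sum_\ell w(A_\ell)\ge w(A)$. The key claim is that each $A_\ell$ avoids $P$: a $P$-copy in $A_\ell$ would use two same-colour ones of $L$ as the entries flanking the insertion site, and same colour forces their indices to differ by at least $t+1$, so $A$ has at least $t$ ones of $L$ strictly between them; these occupy $t$ distinct hyperplanes of the inserted block and, together with the $P$-copy, form $P'$ in $A$. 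Thus $w(A)\le(t+1)\max_\ell w(A_\ell)\le(t+1)\ex(n,P,d)$.

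Finally, for part~\ref{extend-addblanks} I would mimic the proof of Lemma~\ref{addblanks}. Starting from a maximum-weight $P'$-free $A$, I would clear the ones lying in the first and last $k$ hyperplanes perpendicular to each axis, plus $O(k)$ further hyperplanes per axis so that the surviving ones occupy a box whose sidelength is divisible by $k$ in every direction; clearing these $O(dk)$ hyperplanes is the source of the additive term. On the reduced matrix $B$ I would partition indices by residue modulo $k$ in every coordinate into $k^d$ submatrices $B_{\mathbf a}$, each of which must avoid $P$ (reinserting the $k-1$ blank hyperplanes between consecutive kept indices turns a $P$-copy in $B_{\mathbf a}$ into a $P'$-copy in $A$), whence $w(B)=\sum_{\mathbf a}w(B_{\mathbf a})\le k^d\,\ex(\lceil n/k\rceil,P,d)$. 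The hard part is the last step: collapsing the factor $k^d$ down to the stated multiple of $\ex(n,P,d)$, which in the planar case used the Pach--Tardos super-additivity of $\ex$ \cite{Pach2006}; here I would need to invoke and verify its $d$-dimensional analogue, and keep careful track of how the additive boundary term scales with $d$, $k$ and $n$. This super-additivity transfer, rather than any of the pattern-containment arguments, is where I expect the real work to lie.
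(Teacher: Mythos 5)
Your overall route --- porting each planar proof verbatim, with rows/columns replaced by $(d-1)$-dimensional hyperplanes and ``1-rows'' as the lines parallel to the insertion axis --- is exactly what the paper intends: it gives no proof of Lemma \ref{extend} at all, asserting only that the arguments are analogous to the $2$-dimensional ones in \cite{tardos2005}, \cite{Keszegh2009}, and Lemma \ref{addblanks}. Your parts (\ref{extend-addlast}) and (\ref{extend-diagatt}) are correct implementations of that analogy (deleting the extreme one of each of the $n^{d-1}$ lines, and the $P$-heavy/orthant splitting, respectively). Part (\ref{extend-addmid}), however, is wrong as literally written: ``the line carrying the $t$ inserted ones'' is a line of the \emph{pattern} $P'$, not of the host matrix $A$, yet you fix a single line $L$ of $A$ and place \emph{all} ones off $L$ into every class $A_\ell$. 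Nothing then prevents a copy of $P$ in $A_\ell$ whose flanking pair lies on some other line, or which misses $L$ entirely --- $A$ avoids only $P'$, not $P$ --- so the claim that each $A_\ell$ avoids $P$ fails. The repair is small and standard: perform the mod-$(t+1)$ indexing within \emph{every} line of $A$ parallel to the insertion axis, and let $A_\ell$ consist of all ones whose within-line index is congruent to $\ell$; this genuinely partitions the ones, and since containment maps entries sharing all coordinates but one to host entries sharing all coordinates but one, the two flanking images of any $P$-copy in $A_\ell$ lie on a common host line with indices differing by at least $t+1$, after which your counting argument goes through verbatim.

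In part (\ref{extend-addblanks}) two points deserve attention. First, your deletion step clears $O(dk)$ hyperplanes, each of which can carry up to $n^{d-1}$ ones, so this route yields an additive term of order $dkn^{d-1}$, not the $3dkn$ stated in the lemma; for $d=2$ these coincide with the $6kn$ of Lemma \ref{addblanks}, and for $d\ge 3$ the stated $3dkn$ is best read as a typo for $3dkn^{d-1}$ --- you should not expect to reach $3dkn$ by this method. Second, you correctly isolate the genuine obstruction: collapsing $k^d\,\ex(m/k,P,d)$ to $k\,\ex(m,P,d)$ needs the $d$-dimensional analogue of the Pach--Tardos super-additivity, namely $\ex(m,P,d)\ge k^{d-1}\ex(m/k,P,d)$, and neither you nor the paper supplies a proof (the planar construction does not tile naively in higher dimensions, since $k^{d-1}$ translated boxes cannot be placed in general diagonal position inside $[m]^d$). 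Note, though, that in every application the paper makes of this lemma (Theorem \ref{thmoperd}) one has the hypothesis $\ex(n,P,d)\le Cn^{d-1}$, and then super-additivity can be sidestepped entirely: $k^d\,\ex(m/k,P,d)\le k^d\cdot C(m/k)^{d-1}=kCm^{d-1}\le kCn^{d-1}$, which is all that the linear-Ramsey conclusions require. So your proposal is sound for the paper's purposes once part (\ref{extend-addmid}) is repaired, but as a proof of the lemma exactly as stated it leaves both the $3dkn$ constant and the super-additive collapse unestablished.
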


\begin{figure}[H]
\centering
\caption{}
\label{fig:extend}
    \begin{subfigure}[H]{0.3\textwidth}
        \centering
        \includegraphics{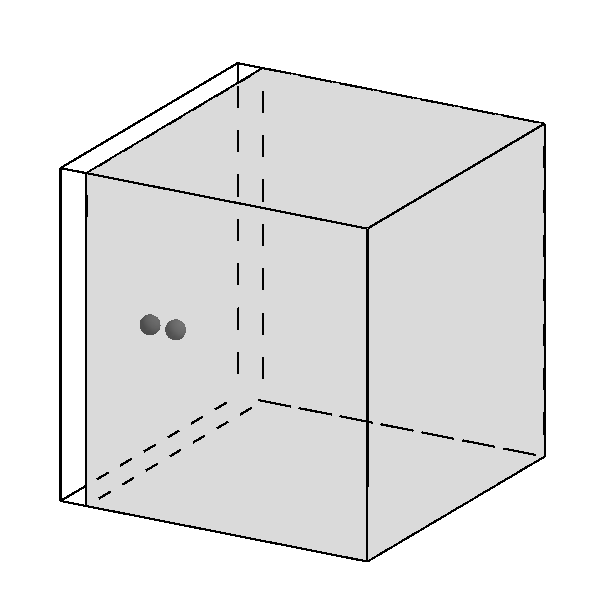}
        \caption{Operation for Lemma \ref{extend}\ref{extend-addlast}}
        \label{fig:extend-addlast}
    \end{subfigure}
    ~
    \begin{subfigure}[H]{0.3\textwidth}
        \centering
        \includegraphics{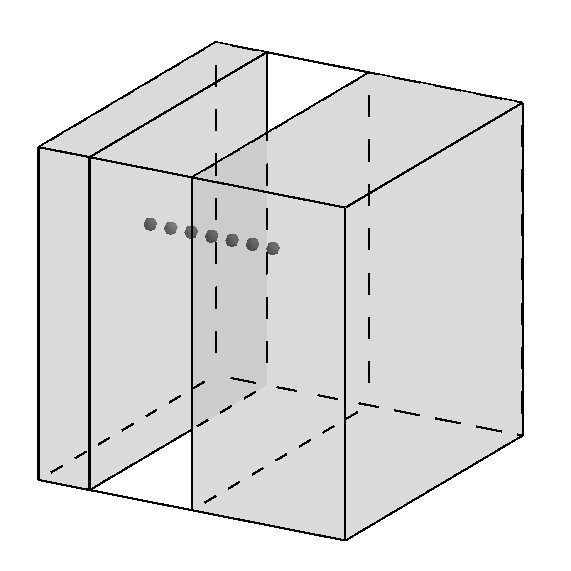}
        \caption{Operation for Lemma \ref{extend}\ref{extend-addmid}}
        \label{fig:extend-addmid}
    \end{subfigure} 
    ~
    \begin{subfigure}[H]{0.3\textwidth}
        \centering
        \includegraphics{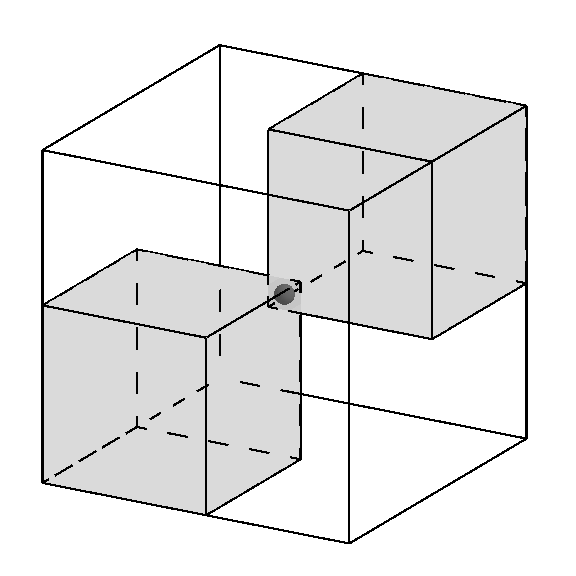}
        \caption{Operation for Lemma \ref{extend}\ref{extend-diagatt}}
        \label{fig:extend-diagatt}
    \end{subfigure}    
    \\
    \vspace{10pt}
    \begin{subfigure}[H]{0.3\textwidth}
        \centering
        \includegraphics{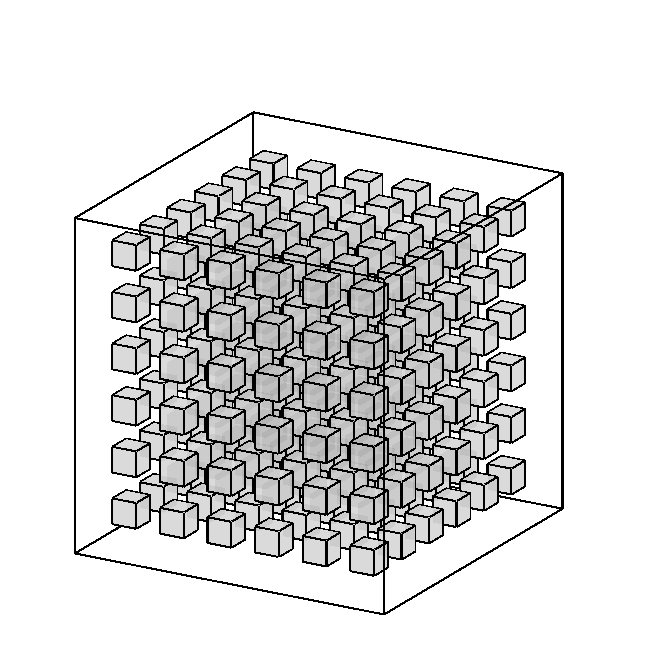}
        \caption{Operation for Lemma \ref{extend}\ref{extend-addblanks}}
        \label{fig:extend-addblanks}
    \end{subfigure}    
    ~
    \begin{subfigure}[H]{0.3\textwidth}
        \centering
        \includegraphics{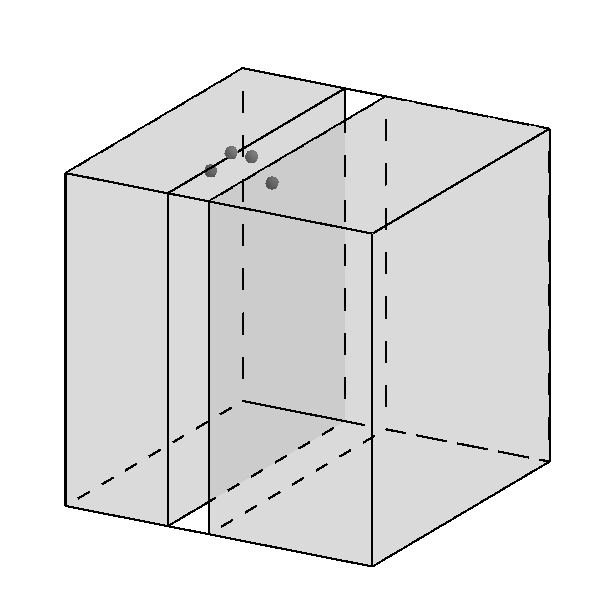}
        \caption{Operation for Lemma \ref{extendpettie}}
        \label{fig:extend-pettie}
    \end{subfigure}  
\end{figure}

The extension of the operation detailed in Lemma \ref{addmidup} (see \cite{Keszegh2009} for proof) to $d$-dimensional 0-1 matrices is less obvious to prove from the $2$-dimensional case than the operations in the preceding lemma, but the proof for $d$ dimensions can be done in a way that is very similar to the proof of the grafting lemma in \cite{Pettie2011}. 

For the proof below, we define $\ex(n, m, P)$ to be the maximum number of ones in a $P$-free 0-1 matrix with dimensions $n \times m$.

\begin{lemma} \label{extendpettie}
Assume that $P$ has two adjacent ones in a $1$-row with minimal $d^{th}$ coordinate (we call this a top $1$-row). Let $Q$ be obtained from $P$ by first adding two empty $(d-1)$-dimensional hyperplanes $A$ and $B$ of entries between the $(d-1)$-dimensional hyperplanes with the ones in the top $1$-row, and then adding a new $(d-1)$-dimensional hyperplane of entries directly above $P$ with exactly two ones that are in the $(d-1)$-dimensional hyperplanes $A$ and $B$ and directly above the top $1$-row from $P$ with the two adjacent ones (see Figure \ref{fig:extend-pettie}). Then $\ex(n, Q, d) = O(\ex(n, P, d))$.
\end{lemma}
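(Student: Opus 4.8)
We're proving Lemma \ref{extendpettie}, a $d$-dimensional generalization of Lemma \ref{addmidup}. The 2D case (Keszegh's grafting lemma) takes a matrix $P$ with two adjacent ones in its top row, inserts two empty columns between them, and then adds a new top row whose only two ones sit in those new columns; the extremal function grows by only a constant factor. The $d$-dimensional version does the same but with $(d-1)$-dimensional hyperplanes playing the role of columns, and the new top hyperplane placed above. The suggestion in the paper is to mimic the proof of the grafting lemma from Pettie \cite{Pettie2011} rather than to directly lift Keszegh's argument.

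**Planning the proof.** Let me think about what I'd do.

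**The plan and key steps.** The plan is to follow the grafting/weight-transfer strategy: take a $Q$-free $d$-dimensional matrix $A$ of sidelength $n$ with the maximum number of ones, and show that after deleting a controlled (linear, i.e. $O(\ex(n,P,d))$) number of ones, the remainder decomposes into pieces each of which must avoid $P$, so that $w(A) \le c\cdot \ex(n,P,d)$ for some constant $c=c(P)$. First I would set up the right notion of a ``$d$-th coordinate slice'': for each fixed value $t$ of the $d$-th coordinate, $A$ restricted to that value is a $(d-1)$-dimensional matrix, and the new top hyperplane in $Q$ forces a structural constraint relating ones in the highest occupied slice of a copy to ones in the slice below. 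Second, I would classify the ones of $A$ according to whether they could serve as the ``top'' one (the image of one of the two new ones in $Q$) of a copy, a classification that depends on how many ones lie in the same $1$-row or in neighboring hyperplanes. Third — the heart of the argument — I would bound the ones that are \emph{not} immediately chargeable to a copy of $P$: the grafting structure means that if too many ones accumulate in a top slice directly above a configuration realizing the top $1$-row of $P$, we complete a copy of $Q$. The combinatorial bookkeeping of Pettie's proof controls exactly this by partitioning columns (here: $(d-1)$-hyperplanes) into blocks and arguing that within each block the surviving ones form a $P$-free pattern, while the interfaces between blocks contribute only $O(n^{d-1})$ ones per block and there are $O(1)$ blocks.

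**The main obstacle.** The hard part will be correctly generalizing Pettie's two-dimensional block decomposition to $d$ dimensions while keeping the error terms linear in $\ex(n,P,d)$ rather than in $n^{d-1}$ (which would be too weak). In two dimensions, ``the row directly above'' and ``adjacent columns'' are unambiguous; in $d$ dimensions the new ones live in $A$ and $B$, which are full $(d-1)$-dimensional hyperplanes, so the condition ``directly above the top $1$-row'' must be tracked across all $d-1$ transverse coordinates simultaneously. I expect the cleanest route is to fix all coordinates except the $d$-th and the one along the $1$-row direction, reducing each such fiber to essentially the 2D situation, apply the 2D grafting bound \ref{addmidup} fiberwise, and then sum; the subtlety is ensuring the fiberwise $P$-avoidance follows from the global $Q$-avoidance and that summing the fiberwise bounds reassembles into $O(\ex(n,P,d))$ rather than overcounting. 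I would lean on the super-additivity of $\ex(\cdot,P,d)$ (the $d$-dimensional analogue of the Pach--Tardos result used in Lemma \ref{addblanks}) to recombine the slice bounds, and verify that the constant absorbed into the $O(\cdot)$ depends only on $P$ and $d$, not on $n$.
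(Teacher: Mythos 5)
Your high-level framing is right---take a $Q$-free extremal matrix $A$, slice it into the $n^{d-2}$ two-dimensional planes spanned by the $1$st and $d$th coordinates, and charge most ones to a $P$-free structure---and this matches the paper's setup. But your proposed mechanism for finishing, namely to ``apply the 2D grafting bound (Lemma \ref{addmidup}) fiberwise, and then sum'' using superadditivity, has a genuine gap: fiberwise $P$-avoidance does not follow from global $Q$-avoidance, and you flag this subtlety without resolving it. Concretely, whenever $Q$ has ones with at least two distinct values of some transverse coordinate (which happens whenever $P$ does), a single 2D fiber of a $Q$-free matrix can be the all-ones $n\times n$ matrix, so the individual fibers avoid no 2D pattern whatsoever; and even granting some fiberwise bound, summing $n^{d-2}$ two-dimensional extremal functions produces quantities of the form $n^{d-2}\ex(n,\cdot,2)$, which bear no useful relation to $\ex(n,P,d)$ for the genuinely $d$-dimensional pattern $P$. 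A copy of $Q$ or $P$ spans many fibers, so the avoidance hypothesis is irreducibly global.

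The paper's proof uses the 2D planes $A_i$ only for bookkeeping, never for pattern avoidance. Within each plane it partitions the ones of every $1$-row into consecutive blocks of $g$ ones (leaving at most $g-1$ ungrouped per $1$-row), compresses the blocks into distinct rows to form auxiliary matrices $A'_i$, and declares a one \emph{good} if it is the bottom-left corner of a copy of the fixed $2\times 4$ pattern $R$ (two adjacent ones on top, flanking ones below); since $\ex(b,n,R)\le 7b+7n$ is linear (cf.\ \cite{fulek2009}), the $1$-rows of $A'_i$ with no good ones carry only $O(n)$ ones in total per plane. The decisive step you are missing is then a single global application of $P$-avoidance: form one $d$-dimensional matrix $A_{\mathrm{good}}$ containing one representative per block with a good one, and observe that $A_{\mathrm{good}}$ must avoid $P$ outright---for if it contained a copy of $P$, the copies of $R$ witnessing goodness of the two adjacent top-row representatives would supply exactly the two grafted ones in the hyperplanes $A$ and $B$, completing a copy of $Q$ inside $A$. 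This yields $w(A)\le g\,\ex(n,P,d)+(g-1)n^{d-1}+7gn^{d-1}/(g-7)=O(\ex(n,P,d))$ with, say, $g=8$; no superadditivity is invoked. Your sketch of the block structure is also off: the blocks are groups of $g$ ones within a $1$-row, of which there are up to $w(A)/g$ (not $O(1)$), and the interface loss is $g-1$ ones per $1$-row, totalling $(g-1)n^{d-1}$.
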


\begin{prf}
Let $A$ be a $Q$-free $d$-dimensional 0-1 matrix of sidelength $n$ with weight $\ex(n, Q, d)$. First partition the matrix $A$ into $n^{d-2}$ $2$-dimensional hyperplanes $A_{1}, \dots, A_{n^{d-2}}$ of entries that only have length greater than $1$ in the $1^{st}$ and $d^{th}$ dimensions. 

For each matrix $A_{i}$, partition the ones in each $1$-row of $A_{i}$ into consecutive blocks of $g$ ones for some $g > 7$, leaving up to $(g-1)n$ ones ungrouped, $g-1$ per $1$-row. Form a matrix $A'_{i}$ from $A_{i}$ by assigning each block to a distinct $1$-row in this way: Let $b_{i,j}$ be the number of blocks in $1$-row $j$ of $A_{i}$ and let $b_{i, <j} = \sum_{j' < j} b_{i, j'}$ be the number of blocks in $1$-rows preceding $j$. Block $t$ of $1$-row $j$ of $A_{i}$ is assigned to row $b_{i, <j}+t$ of $A'_{i}$. 

As in \cite{Pettie2011}, call a one in $A'_{i}$ good if it is in the bottom left corner of a copy of the pattern $R$ below: 

$$
\begin{pmatrix}
        & \bullet & \bullet \\
\bullet &         &         & \bullet\\
\end{pmatrix}
$$

A $1$-row in $A'_{i}$ is called bad if it has no good ones. The submatrix of bad $1$-rows is $R$-free and if there are $b$ bad $1$-rows, then there are exactly $b g$ ones in this submatrix, so $b g \le \ex(b, n, R) \le 7b+7n$. The last inequality is well-known, see e.g. \cite{fulek2009}. Thus we have $b \le 7n/(g-7)$. 

Also form a new $n \times \dots \times n \times m$ $d$-dimensional 0-1 matrix $A_{\text{good}}$ from $A$ that contains exactly a single one from each block in $A$ that corresponds to a good one in some $A'_{i}$. Then $A_{\text{good}}$ avoids $P$, or else $A$ contained $Q$.

Thus we have $w(A) \le g w(A_{\text{good}})+(g-1)n^{d-1}+7 g n^{d-1} / (g-7) \le g \ex(n, P, d) + (g-1)n^{d-1}+7 g n^{d-1} / (g-7) = O(\ex(n, P, d))$ by setting e.g. $g = 8$.
\end{prf}

We now generalize from ordered graphs to ordered hypergraphs.
A $d$-dimensional 0-1 matrix is equivalent to a $d$-partite, $d$-uniform ordered hypergraph, where the number of vertices in the $i^{th}$ partition is equal to the size of the $i^{th}$ dimension of the matrix. Each one in the matrix at index $(i_1, \dots, i_d)$ corresponds to an edge in the hypergraph on vertices $i_1, \dots, i_d$. Klazar and Marcus \cite{klazar2007} derived several results on ordered hypergraphs using bounds on permutation matrices.

The following is a generalization of Theorem 18 to the hypergraph setting.

\begin{theorem} \label{thmoperd}
Let $P$ be a fixed $d$-dimensional 0-1 matrix with $\ex(n, P, d) = O(n^{d-1})$. Suppose that we perform one of the operations below to yield a new family of $d$-dimensional 0-1 matrices $\left\{P_{j}\right\}$. Then the Ramsey numbers of the ordered hypergraphs $\cG_{P_{j}}$ grow linearly with respect to their size.

\begin{enumerate}
\item Assume that $P$ has a one in its first or last $(d-1)$-dimensional hyperplane of entries in some dimension. Let $P_j$ be obtained from $P$ by iterating the operation in Lemma \ref{extend}\ref{extend-addlast} a total of $j$ times.

\item Assume that $P$ has two adjacent ones in some $1$-row. Let $P_j$ be obtained from $P$ by applying the operation in Lemma \ref{extend}\ref{extend-addmid} with $t = j$.

\item Assume that $P$ has two adjacent ones in a top $1$-row. Let $P'$ be obtained from $P$ by applying the operation in Lemma \ref{extendpettie} to obtain a new $d$-dimensional 0-1 matrix with two adjacent ones in a $1$-row in a new top $(d-1)$-dimensional hyperplane of entries, and then let $P_j$ be obtained from $P'$ by applying the operation in Lemma \ref{extend}\ref{extend-addmid} with $t = j$ to those adjacent ones in the $1$-row in the new top $(d-1)$-dimensional hyperplane of entries.

\item \label{diagd} Assume that $P$ has ones in opposite corners. Let $P_j$ be obtained from $P$ by applying the operation in Lemma \ref{extend}\ref{extend-diagatt} a total of $j$ times with $P = Q$.

\item For this last operation, there are no additional requirements on $P$. Let $P_j$ be obtained from $P$ by inserting $n$ empty $(d-1)$-dimensional hyerplanes between every adjacent pair of $(d-1)$-dimensional hyperplanes in $P$.
\end{enumerate}
\end{theorem}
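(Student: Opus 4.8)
The plan is to follow the same two-step scheme that proves Theorem~\ref{thmoper}, replacing each ingredient by its $d$-dimensional counterpart. The engine has two parts: (i) a hypergraph analogue of Lemma~\ref{lem:01mat:ramsey} that bounds $R(\cG_P)$ in terms of $\ex(\cdot,P,d)$, and (ii) the extremal-function operations of Lemma~\ref{extend} and Lemma~\ref{extendpettie}, each of which sends a pattern with $\ex(n,\cdot,d)=O(n^{d-1})$ to a new pattern with the same growth rate, the hidden constant increasing only linearly in the family index $j$. Combining these yields $R(\cG_{P_j})=O(j)$, which I would then match against the (also linearly growing) number of vertices of $\cG_{P_j}$ to conclude linearity in size.

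First I would establish the Ramsey-to-extremal bound. Given a $\cG_P$-free $2$-coloring of the complete ordered $d$-uniform hypergraph on $N=R(\cG_P)-1$ vertices, partition the vertex set into $d$ consecutive intervals $I_1<\dots<I_d$, each of size $n:=\lfloor N/d\rfloor$ (discarding the at most $d-1$ leftover vertices). Restrict attention to the $d$-partite sub-hypergraph formed by the $n^d$ edges with exactly one endpoint in each $I_i$; this is recorded by a $d$-dimensional $0$-$1$ tensor of sidelength $n$ in each color. The majority color class has at least $n^d/2$ ones, and its tensor avoids $P$, since a copy of $P$ would be a monochromatic order-respecting copy of $\cG_P$ (here is where interval $d$-chromaticity of $\cG_P$ is used). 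Hence $n^d/2\le \ex(n,P,d)$, and if $\ex(n,P,d)\le c\,n^{d-1}$ this forces $\lfloor N/d\rfloor\le 2c$, i.e. $R(\cG_P)=O(cd)$.

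Next I would verify that every operation preserves $\ex(n,\cdot,d)=O(n^{d-1})$ with constant linear in $j$, reading the estimates directly off the cited lemmas: iterating Lemma~\ref{extend}\ref{extend-addlast} a total of $j$ times adds $jn^{d-1}$; Lemma~\ref{extend}\ref{extend-addmid} with $t=j$ multiplies by $j+1$; the grafting Lemma~\ref{extendpettie} costs only a constant factor, after which Lemma~\ref{extend}\ref{extend-addmid} contributes a further factor $j+1$; chaining Lemma~\ref{extend}\ref{extend-diagatt} $j$ times with $P=Q$ gives, by subadditivity, a factor $j+1$; and Lemma~\ref{extend}\ref{extend-addblanks} with $k=j+1$ gives $\ex(n,P_j,d)\le (j+1)\ex(n,P,d)+3d(j+1)n$. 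In every case $\ex(n,P_j,d)=O(j\,n^{d-1})$. Moreover each operation enlarges the side lengths of the matrix by an amount linear in $j$, so the number of vertices $v(\cG_{P_j})=\sum_i(\text{sidelength}_i)$ is $\Theta(j)$. Feeding the constant $c=c_j=O(j)$ into the bound from the previous paragraph yields $R(\cG_{P_j})=O(c_j\,d)=O(j)=O(v(\cG_{P_j}))$ for fixed $d$, which is the claimed linearity.

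I expect the main obstacle to be the first step: pinning down the hypergraph Ramsey-to-extremal inequality exactly. The delicate points are (a) confirming that restricting a $\cG_P$-free coloring of the complete hypergraph to the $d$-partite induced sub-hypergraph on interval parts genuinely preserves the absence of order-respecting copies of $\cG_P$, and (b) absorbing the floor/ceiling rounding and the discarded leftover vertices into constants depending only on $d$. By contrast, the per-operation extremal estimates are immediate consequences of Lemma~\ref{extend} and Lemma~\ref{extendpettie}, so once the bound of the first step is in place the remainder is routine bookkeeping of constants.
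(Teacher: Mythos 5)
Your proposal is correct and takes essentially the same approach the paper intends: Theorem~\ref{thmoperd} is presented there without a written proof, as a corollary of Lemmas~\ref{extend} and~\ref{extendpettie} combined with the ``straightforward extension'' of Lemma~\ref{lem:01mat:ramsey} (which the paper later invokes in exactly your form, $\tfrac{1}{2}\left(R(\cG_P)/d\right)^d \le \ex\left(R(\cG_P)/d, P, d\right)$, obtained by splitting the vertices into $d$ consecutive intervals and passing to the majority-color transversal tensor). Your explicit verification of that $d$-partite splitting step and the $O(j)$ bookkeeping of the extremal constants for each of the five operations are precisely the details the paper leaves to the reader.
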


We mention some corollaries of the preceding theorem that are analogous to some of the corollaries from the last section. Suppose that $P$ is any $d$-dimensional permutation matrix of sidelength $k$ with ones in opposite corners. In \cite{klazar2007}, it was proved that $\ex(n, P, d) = O(n^{d-1})$, and later in \cite{geneson2017} it was showed that $\ex(n, P, d) = 2^{O(k)}n^{d-1}$, where the coefficient in the $O(k)$ depends on $d$. 

A straightforward extension of Lemma \ref{lem:01mat:ramsey} gives us a natural $d$-dimensional generalisation of Proposition \ref{prop:layeredperm}:

\begin{corollary}
Let $P$ be a sum-decomposable $d$-dimensional permutation matrix of sidelength $km$, where each block has sidelength $k$. Then the Ramsey number of $\cG_P$ is $2^{O(k)}m$ where the coefficient in $O(k)$ depends on $d$.
\end{corollary}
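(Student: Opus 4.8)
The plan is to transcribe the proof of Proposition \ref{prop:layeredperm} into $d$ dimensions, replacing each two-dimensional tool by the $d$-dimensional analogue provided earlier in the section. Write $P = P_1 \oplus \cdots \oplus P_m$, where each block $P_i$ is a $d$-dimensional permutation matrix of sidelength $k$ and the blocks are placed along the main diagonal in every coordinate. For each $i$, let $Q_i = I_1 \oplus P_i \oplus I_1$ be the $d$-dimensional permutation matrix of sidelength $k+2$ obtained by prepending a single one, then $P_i$, then appending another single one along the diagonal; by construction $Q_i$ has ones in two opposite corners, which is exactly what is needed both to invoke the external extremal bound and to glue the pieces together.

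First I would bound $\ex(n, Q_i, d)$. Since $Q_i$ is a $d$-dimensional permutation matrix of sidelength $k+2$ with ones in opposite corners, the result of \cite{geneson2017} quoted above gives $\ex(n, Q_i, d) = 2^{O(k)} n^{d-1}$, where the constant in the exponent depends only on $d$. Next I would glue $Q_1, \ldots, Q_m$ into a single matrix $Q$ using the diagonal attachment operation of Lemma \ref{extend}\ref{extend-diagatt}, identifying the corner one of $Q_i$ with the opposite corner one of $Q_{i+1}$. Iterating the subadditivity bound of that lemma $m-1$ times yields
\[\ex(n, Q, d) \le \sum_{i=1}^m \ex(n, Q_i, d) \le m \cdot 2^{O(k)} n^{d-1}.\]

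Then I would apply the $d$-dimensional extension of Lemma \ref{lem:01mat:ramsey}. Splitting the vertex set of the complete ordered $d$-uniform hypergraph on $N = R(\cG_Q)-1$ vertices into $d$ consecutive intervals of size about $N/d$ and counting only the monochromatic edges meeting each interval exactly once, a pigeonhole argument gives $N^d \le 2 d^d \ex(N/d, Q, d)$. Substituting the bound on $\ex(n,Q,d)$ and solving forces $N \le 2^{O(k)} m$, so $R(\cG_Q) = 2^{O(k)} m$. Finally, because the blocks $P_1, \ldots, P_m$ sit along the diagonal of $Q$ (the extra single ones serving only as the shared corners used during gluing), the hypergraph $\cG_Q$ contains $\cG_P$, and hence $R(\cG_P) \le R(\cG_Q) = 2^{O(k)} m$.

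The only genuinely substantive input is the external estimate $\ex(n, Q_i, d) = 2^{O(k)} n^{d-1}$ from \cite{geneson2017}; the rest is a direct transcription of the two-dimensional argument. The one step requiring care is setting up the $d$-dimensional Ramsey lemma correctly: one must restrict to the $d$-partite sub-hypergraph of edges with exactly one endpoint in each of the $d$ intervals, so that the associated $d$-dimensional matrix has the correct shape and inherits $Q$-freeness from the coloring, and one must track that the factor $d^d$ from the split is absorbed into the final $2^{O(k)}$ whose constant now depends on $d$.
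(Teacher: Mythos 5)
Your proposal is correct and is essentially the paper's own argument: the paper states the corollary as a ``straightforward extension'' of Proposition \ref{prop:layeredperm}, and your write-up is exactly that transcription --- padding each block to $Q_i=I_1\oplus P_i\oplus I_1$, invoking the $2^{O(k)}n^{d-1}$ bound from \cite{geneson2017}, gluing via Lemma \ref{extend}\ref{extend-diagatt}, and finishing with the $d$-dimensional analogue of Lemma \ref{lem:01mat:ramsey} and the containment $\cG_P\subseteq\cG_Q$. Your explicit $d$-partite pigeonhole inequality $N^d\le 2d^d\ex(N/d,Q,d)$ matches the form the paper uses elsewhere (in the proof for $\cA_n^d$), so no gap remains.
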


We can also define a $d$-dimensional $j$-tuple permutation matrix as the Kronecker product of a $d$-dimensional permutation matrix and a $d$-dimensional matrix of all ones where only one dimension has length greater than $1$. In \cite{geneson2017}, it was proved that $\ex(n, Q, d) = 2^{O(k)}n^{d-1}$ for every $d$-dimensional $j$-tuple permutation matrix in which $d-1$ of the dimensions have sidelength $k$, where the coefficient in the $O(k)$ depends on $d$ and $j$.

Suppose that $Q$ is a $d$-dimensional $j$-tuple permutation matrix with $j k$ ones that has ones in opposite corners. If $Q'$ is the maximal $d$-dimensional $j$-tuple permutation matrix contained in the $d$-dimensional 0-1 matrix obtained from applying Lemma \ref{extend}\ref{extend-diagatt} $m$ times, then the ordered hypergraph $\cG_{Q'}$ has Ramsey number $O(m)$. 

One specific ordered hypergraph whose Ramsey number has been of interest in recent years is the \emph{monotone hyperpath} $\cP^d_n$ on $n$ vertices with edge-set consisting of all intervals of $d$ consecutive vertices. In \cite{moshkovitz2014} it was shown that the $t$-color Ramsey number of $\cP^d_n$ for $d\ge 3$ is a tower of height $d-2$ whose final exponent is between $(n-d+1)^{t-1}/2\sqrt{t}$ and $2(n-d+1)^{t-1}$.  We know that in the simple graph case, the monotone path has relatively high Ramsey number (quadratic in the number of vertices), while the "special" alternating path has linear Ramsey number. We show that similar behaviour can be observed in the hypergraph setting as well, where the Ramsey number is high for the monotone hyperpath $\cP^d_n$, but quite low for a natural analogue of an alternating path.

\begin{definition}
The tight $d$-uniform hyperpath on $n$ vertices $P_n^{d}$ has vertex set $[n]$ and edges of the form $\{j,j+1,...,j+d-1\}$ for $1\le j\le n-d+1$.\\
The alternating $d$-partite ordering $\cA_n^{d}$ of the tight $d$-uniform hyperpath on $n=dm$ vertices is $1,d+1,2d+1,...,(m-1)d+1,2,d+2,...,(m-1)d+2,...,d,2d,...,md$.
\end{definition}

\begin{minipage}{0.4\textwidth}
\begin{figure}[H]
	\centering
    \begin{tikzpicture}[scale = 2.6]
		\foreach \n in {1,4,7} {
            \draw[fill=red!20,fill opacity=0.3] ({290-40*\n}:1.125) -- ({250-40*\n}:1.2) -- ({210-40*\n}:1.275) arc ({210-40*\n}:{30-40*\n}:0.275) -- ({250-40*\n}:0.8) -- ({290-40*\n}:0.875) arc ({110-40*\n}:{-70-40*\n}:0.125);
		}
		\foreach \n in {2,5} {
            \draw[fill=green!20,fill opacity=0.3] ({290-40*\n}:1.125) -- ({250-40*\n}:1.2) -- ({210-40*\n}:1.275) arc ({210-40*\n}:{30-40*\n}:0.275) -- ({250-40*\n}:0.8) -- ({290-40*\n}:0.875) arc ({110-40*\n}:{-70-40*\n}:0.125);
		}
		\foreach \n in {3,6} {
            \draw[fill=blue!20,fill opacity=0.3] ({290-40*\n}:1.125) -- ({250-40*\n}:1.2) -- ({210-40*\n}:1.275) arc ({210-40*\n}:{30-40*\n}:0.275) -- ({250-40*\n}:0.8) -- ({290-40*\n}:0.875) arc ({110-40*\n}:{-70-40*\n}:0.125);
		}
        
        \foreach \n in {1,...,9} {
            \node[circle,fill=black,inner sep=0pt,minimum size=6pt] (n\n) at ({290-40*\n}:1) {};
		}
\end{tikzpicture}
\label{fig:althypercycle1}
\caption{$P_9^{3}$, the tight $3$-uniform hyperpath on 9 vertices. Hyperedges are drawn as blobs.}
\end{figure}
\end{minipage}
\quad
\begin{minipage}{0.52\textwidth}
\begin{figure}[H]
	\centering
    \begin{tikzpicture}[scale = 0.95]
        \def\xx{1}
        \def\yy{1.2}
        \foreach \n/\m in {1/1,2/3,3/5,4/1,5/3,6/5,7/1,8/3,9/5} {
            \node[circle,fill=black,inner sep=0pt,minimum size=6pt] (n\n) at ({\xx*\n},{\yy*\m}) {};
        }
        \foreach \n in {1,...,3} {
            \draw
            ({3*\xx*\n-\xx*2.5},{\yy*-0.25}) -- 
            ({3*\xx*\n+\xx*0.5},{\yy*-0.25}) -- 
            ({3*\xx*\n+\xx*0.5},{\yy*0.25}) -- 
            ({3*\xx*\n-\xx*2.5},{\yy*0.25}) -- cycle;
        }
        \foreach \n in {1,...,9} {
            \node (m\n) at ({\xx*\n},{0}) {\n};
            \draw[dashed,->] (m\n) -- (n\n);
        }
        \foreach \n in {1,...,3} {
            \draw[fill=red!20,fill opacity=0.3]
            ({\xx*\n},      {-\yy+\yy*2*\n+0.3125}) --
            ({\xx*\n+\xx*3},{-\yy+\yy*2*\n+0.5}) -- 
            ({\xx*\n+\xx*6},{-\yy+\yy*2*\n+0.6875}) arc (90:-90:0.6878) --
            ({\xx*\n+\xx*3},{-\yy+\yy*2*\n-0.5}) -- 
            ({\xx*\n},      {-\yy+\yy*2*\n-0.3125}) arc (270:90:0.3125);
		}
		\foreach \n in {1,...,2} {
		    \draw[fill=green!20,fill opacity=0.3]
            ({\xx*\n+\xx*3},        {-\yy+\yy*2*\n+0.3125}) -- 
            ({\xx*\n+\xx*6-1}, {-\yy+\yy*2*\n}) -- 
            ({\xx*\n+\xx},          { \yy+\yy*2*\n-0.6875}) arc (270:45:0.6875) --
            ({\xx*\n+\xx*6+0.3536}, {-\yy+\yy*2*\n+0.3536}) arc (45:-90:0.5) --
            ({\xx*\n+\xx*3},        {-\yy+\yy*2*\n-0.3125}) arc (270:90:0.3125);
		}
		\foreach \n in {1,...,2} {
		    \draw[fill=blue!20,fill opacity=0.3]
		    ({\xx*\n+\xx*6-0.3125}, {-\yy+\yy*2*\n}) --
		    ({\xx*\n+\xx},          { \yy+\yy*2*\n-0.5}) arc (270:90:0.5) --
		    ({\xx*\n+\xx*4},        { \yy+\yy*2*\n+0.6875}) arc (90:-90:0.6875) --
		    ({\xx*\n+\xx+1},   { \yy+\yy*2*\n-0.3536}) -- 
		    ({\xx*\n+\xx*6},        {-\yy+\yy*2*\n+0.3125}) arc (90:-180:0.3125);
		}
\end{tikzpicture}
\label{fig:althypercycle2}
\caption{$\cA_9^{3}$, the alternating tripartite ordering of $P_9^{3}$. Vertices have been moved vertically so the hyperedges are easier to see.}
\end{figure}
\end{minipage}

\begin{proposition}
For integers $n$ divisible by $d$, it holds that $R(\cA_{n}^{d})\le 2dn$.
\end{proposition}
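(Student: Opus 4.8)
The plan is to run the matrix method of this section in its $d$-dimensional form: identify the associated $d$-dimensional matrix $P$ of $\cA_n^d$, bound $\ex(N,P,d)$ by repeatedly applying the diagonal-attachment operation of Lemma~\ref{extend}\ref{extend-diagatt}, and feed the result into the $d$-dimensional analogue of Lemma~\ref{lem:01mat:ramsey}.

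First I would pin down $P$. Writing $p^{(b)}_a$ for the $a$-th vertex of the $b$-th color class (so that $p^{(b)}_a$ is the original vertex $(a-1)d+b$), the edge $e_j=\{j,\dots,j+d-1\}$ with $j=(a-1)d+b$ is the transversal using coordinate $a+1$ in parts $1,\dots,b-1$ and coordinate $a$ in parts $b,\dots,d$. Thus the ones of $P$ are exactly the lattice points of the monotone, unit-step path in $[m]^d$ that starts at $(1,\dots,1)$, ends at $(m,\dots,m)$, and cycles through the coordinate directions $1,2,\dots,d,1,2,\dots$; equivalently $P$ is a $d$-dimensional ``staircase.''

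The key step is that this staircase is a diagonal concatenation of its $d(m-1)$ unit steps. Each unit step is a pattern consisting of two ones that differ in a single coordinate; such a pattern has $\ex(N,\cdot,d)\le N^{d-1}$, since avoiding it forces at most one $1$ on each axis-parallel line in that direction. Because the path is coordinatewise monotone, consecutive steps meet only at a shared lattice point that is the maximal corner of the portion built so far and the minimal corner of the next step, so each gluing is a legitimate instance of Lemma~\ref{extend}\ref{extend-diagatt} (this is exactly the operation of Theorem~\ref{thmoperd}, item~\ref{diagd}, but iterated on line-patterns rather than on a single fixed block). Iterating the bound $\ex(N,R,d)\le\ex(N,P,d)+\ex(N,Q,d)$ over all $d(m-1)$ steps gives $\ex(N,P,d)\le d(m-1)N^{d-1}$.

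Finally I would invoke the $d$-dimensional version of Lemma~\ref{lem:01mat:ramsey}: splitting a $\cG_P$-free $2$-coloring of the complete ordered $d$-uniform hypergraph on $N$ vertices into $d$ equal intervals and passing to the majority color on the $(N/d)^d$ transversal edges yields $N^d\le 2d^d\,\ex(N/d,P,d)$. Combined with the previous paragraph, $N^d\le 2d^d\cdot d(m-1)(N/d)^{d-1}=2d^2(m-1)N^{d-1}$, so $N\le 2d^2(m-1)<2d^2m=2dn$ and hence $R(\cA_n^d)\le 2dn$. The delicate points, which I would check carefully, are the verification that the staircase genuinely decomposes as iterated diagonal attachments (the corner/monotonicity bookkeeping) and the constant tracking that makes the final estimate land exactly on $2dn$; rounding issues from $N$ not being divisible by $d$ are absorbed by the slack between $2d^2(m-1)$ and $2dn$.
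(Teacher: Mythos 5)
Your proposal is correct and runs along the same skeleton as the paper's proof: both identify the associated matrix of $\cA_n^d$ as the cyclic staircase of ones from $(1,\dots,1)$ to $(m,\dots,m)$ in $[m]^d$, both derive a bound of the form $\ex(N,P,d)\le (\text{path length})\cdot N^{d-1}$, and both finish with the same $d$-dimensional extension of Lemma \ref{lem:01mat:ramsey}, giving $\tfrac12(N/d)^d\le \ex(N/d,P,d)$ and hence $R(\cA_n^d)\le 2dn$. The one genuine difference is the sub-lemma used for the extremal bound: the paper builds the staircase by iterating Lemma \ref{extend}\ref{extend-addlast} (each application appends a hyperplane with a single one at cost $N^{d-1}$), whereas you decompose the staircase into its $d(m-1)=n-d$ unit-step two-one patterns and glue them by iterated diagonal attachment, Lemma \ref{extend}\ref{extend-diagatt}, supplying the elementary fact that a pattern of two collinear ones has $\ex(N,\cdot,d)\le N^{d-1}$ (correct under the containment definition, since submatrix selection collapses any two ones on a common $1$-row onto the adjacent pattern). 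Your corner bookkeeping for the gluing is sound: since all steps are positive unit steps, the current endpoint of the partial staircase is always the maximal corner of its bounding box, so each attachment is a legitimate instance of Lemma \ref{extend}\ref{extend-diagatt}. The trade-off is slight: the paper's route needs no auxiliary extremal fact, while yours is exact on the step count (the paper's ``$n-1$ successive applications'' is a harmless overcount of the true $n-d$) and makes the constant tracking transparent, landing on $N\le 2d^2(m-1)<2dn$; your remark that the division-by-$d$ rounding is absorbed by this slack is also right, and is glossed over in the paper's ``straightforward extension'' as well.
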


\begin{prf}
The $d$-dimensional matrix $M$ corresponding to the $d$-partite canonical ordering consists of zeros and a path of ones starting at $\mathbf{x_0}=(1,...,1)\in [n/d]^d$ and satisfying $\mathbf{x_j}=\mathbf{x_{j-1}}+\delta_{j'}$, where $\delta_{j'}\in [n/d]^d$ consists of zeros and a one at position $j'\in\{1,...,d\}$ such that $j'\equiv j \mod d$. Note that $\mathbf{x_{n-1}}=(n/d,....,n/d)\in [n/d]^d$ and the matrix can be obtained by $n-1$ successive applications of the operation described in Lemma \ref{extend}\ref{extend-addlast} to the identity matrix of side length 1. Therefore $\ex(N,M,d)\le nN^{d-1}$. By a straightforward extension of Lemma \ref{lem:01mat:ramsey} we get that $\frac{1}{2}\left(R(\cA_{n}^{d})/d\right)^d\le \ex(R(\cA_{n}^{d})/d,\cA_{n}^{d},d)$, which implies the result.
\end{prf}
\vspace{-5pt}
\section{Concluding remarks}

The problem of determining which orderings of a given graph have high or low Ramsey numbers is an interesting one, and a lot is still unknown. 

For example, the path $P_n$ on $n$ vertices has a monotone ordering whose Ramsey number (see Proposition \ref{addlastedge} or \cite{fox2012}) is $(n-1)^2+1$, and an alternating ordering (see \cite{balko2015}) whose Ramsey number is $O(n)$. In light of this, we mention a conjecture of Balko et al \cite{balko2015}:
\begin{conjecture}
Among all orderings of $P_n$, the alternating path has minimum Ramsey number.
\end{conjecture}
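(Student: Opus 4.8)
The plan is to prove the matching upper and lower bounds separately. Write $\cA_n$ for the alternating ordering of $P_n$ and set $r_n := R(\cA_n)$. The first step is to pin down $r_n$ as precisely as possible via the matrix method of Lemma~\ref{lem:01mat:ramsey}: since $\cA_n$ is interval $2$-chromatic and its associated matrix $P_{\cA_n}$ is a monotone ``staircase'' pattern (a monotone lattice path of ones, each internal path-vertex contributing two adjacent ones), it has extremal function linear in its side length, and one can hope to extract not merely the asymptotics $r_n = O(n)$ already in \cite{balko2015} but a clean bound of the form $r_n \le 2(n-1)$. Establishing this exact constant is the content of the upper-bound step, and the threshold $2(n-1)$ is chosen to dovetail with the reduction below.

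With $r_n$ in hand, the conjecture becomes the assertion that $R(\cG) \ge r_n$ for every ordering $\cG$ of $P_n$. The second step handles all orderings of large interval chromatic number by the standard blow-up construction: partitioning $\cK_{(n-1)(\chi_I-1)}$ into $\chi_I - 1$ consecutive blocks of $n-1$ vertices and coloring an edge red exactly when its endpoints lie in a common block yields a red graph that is a disjoint union of $(n-1)$-cliques (too small to contain the connected $n$-vertex graph $\cG$) and a blue $(\chi_I-1)$-partite graph whose interval structure forbids a properly interval-$(\chi_I-1)$-colorable copy of $\cG$. Hence $R(\cG) > (n-1)(\chi_I(\cG)-1)$, so any $\cG$ with $\chi_I(\cG) \ge 3$ already satisfies $R(\cG) > 2(n-1) \ge r_n$ (in particular, since the monotone path has $\chi_I = n$, this recovers its known bound $R > (n-1)^2$). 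This reduces the conjecture to comparing $\cA_n$ with the remaining interval $2$-chromatic orderings of $P_n$.

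The third step, which is the crux, treats the interval $2$-chromatic orderings. Here the bipartition into odd- and even-indexed path vertices is forced, and such an ordering is determined by the orders chosen within each class; its associated matrix $P_\cG$ is a \emph{path matrix} obtained from the monotone staircase $P_{\cA_n}$ by permuting rows and columns. The strategy is to relate $R(\cG)$ to $\ex(N, P_\cG)$ in both directions: Lemma~\ref{lem:01mat:ramsey} bounds $R(\cG)$ above by $\ex$, while a near-extremal $P_\cG$-free $0$-$1$ matrix can be converted into a $\cG$-free coloring to bound $R(\cG)$ below. One then argues that among all row/column permutations the monotone staircase minimizes the extremal function, so that every ordering with a ``badly crossing'' matrix has super-linear $\ex(N, P_\cG)$, hence super-linear Ramsey number, comfortably exceeding $r_n$.

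The main obstacle is the remaining orderings that, like $\cA_n$ itself, avoid long monotone sub-structure and therefore keep both $\ex(N, P_\cG)$ and their Ramsey number linear in $n$. For these the two-sided $R$–$\ex$ correspondence loses constant (indeed super-constant) factors, so an extremal-function inequality does not transfer to a Ramsey-number inequality with the matching constant that a minimality statement demands. Closing this gap appears to require a lower-bound coloring tailored to the fine shape of $P_\cG$, strictly stronger than the blow-up bound $R(\cG) > n-1$, in tandem with the exact determination of $r_n$. Small instances, verifiable by the flag-algebra method of \cite{lidicky2017+} as used for $\cC_6^{\text{alt}}$, can serve as a testbed and as supporting evidence, but the uniform comparison across all linearly-bounded interval $2$-chromatic orderings is the genuinely open heart of the conjecture.
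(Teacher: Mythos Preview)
The statement is a \emph{conjecture} attributed to Balko, Cibulka, Kr\'al and Kyn\v{c}l; the paper records it as open and offers no proof, so there is nothing in the paper to compare your argument against. Your proposal is not a proof either, and you say as much in the final paragraph: the strategy collapses precisely at the step that matters.

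Concretely, two ingredients are missing. First, the upper bound $r_n \le 2(n-1)$ in your Step~1 is asserted as a hope, not derived; the matrix method via Lemma~\ref{lem:01mat:ramsey} loses at least a factor of~$8$ in passing from $\ex$ to $R$, so even a sharp evaluation of $\ex(N,P_{\cA_n})$ does not yield a sharp value of $r_n$. Without the exact value of $r_n$, the threshold against which you compare all other orderings is undetermined, and your Step~2 reduction (which needs $r_n \le 2(n-1)$ to dispose of $\chi_I \ge 3$) is conditional. Second, and more seriously, your Step~3 plan to compare interval $2$-chromatic orderings through their extremal functions cannot succeed for the reason you identify: many non-alternating path matrices also have linear extremal function (any staircase-like pattern built via Lemmas~\ref{addlast}--\ref{diagatt} does), and the $R$--$\ex$ correspondence is lossy in both directions by unbounded multiplicative constants. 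Distinguishing two orderings whose Ramsey numbers are, say, $1.7n$ and $1.9n$ is exactly what a minimality statement requires and exactly what the extremal-function route cannot deliver. The conjecture remains open; what you have written is a reasonable survey of why the available tools fall short, not a proof.
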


The bound from Theorem \ref{thm:randommatching} implies that there exists an ordering of the path whose Ramsey number is $n^{\Omega(\log n/\log \log n)}$. In the unordered setting, the Ramsey numbers of matchings and paths are of the same order, which prompts us to ask the following question:

\begin{question}
Is it true that there exists an ordering $\cP_n$ of the path on $n$ vertices, such that $R(\cP_n)=n^{\omega(\log n/\log \log n)}$?
\end{question}

On a similar note, it is known that every interval 2-chromatic ordering $\cM$ of the matching on $n$ vertices has Ramsey number at most $n^2$ (with a matching lower bound \cite{Conlon2017}), so 

\begin{question}
Is it true that every interval 2-chromatic ordering $\cP_n$ of the path on $n$ vertices has Ramsey number $O(n^2)$?
\end{question}
\vspace{-10pt}
\section{Acknowledgements}
We would like to thank Bernard Lidick\'y for discussions during the early stages of this project, and for his comments on this manuscript. This work was initiated at the Graduate Research Workshop in Combinatorics supported by NSF grants 1604458, 1604697, 1603823, "Collaborative Research:  Rocky Mountain - Great Plains Graduate Research Workshops in Combinatorics" and NSA grant H98230-18-1-0017, "The 2018 and 2019 Rocky Mountain - Great Plains Graduate Research Workshops in Combinatorics".  

\addcontentsline{toc}{section}{References}

\bibliographystyle{abbrv}
\bibliography{references}

\end{document}